\newtheorem{theorem}{Theorem}[section]
\newtheorem{lemma}[theorem]{Lemma}
\newtheorem{proposition}[theorem]{Proposition}
\newtheorem{corollary}[theorem]{Corollary}
\theoremstyle{definition}
\newtheorem{definition}[theorem]{Definition}
\newtheorem*{remark}{Remark}
\DeclareMathOperator{\Ext}{Ext}
\DeclareMathOperator{\Hom}{Hom}
\DeclareMathOperator{\Tor}{Tor}
\DeclareMathOperator{\cok}{cok}
\newcommand{\cat}[1]{\mathcal{#1}}           
\newcommand{\class}[1]{\mathcal{#1}}   
\newcommand{\N}{\mathbb{N}}
\newcommand{\Z}{\mathbb{Z}}
\newcommand{\Q}{\mathbb{Q/Z}}
\newcommand{\mathcolon}{\colon\,} 
\newcommand{\ch}{\textnormal{Ch}(R)}
\newcommand{\cha}[1]{\textnormal{Ch}(\mathcal{#1})}
\newcommand{\rmod}{R\text{-Mod}}
\newcommand{\rightperp}[1]{#1^{\perp}}
\newcommand{\leftperp}[1]{{}^\perp #1}
\newcommand{\homcomplex}{\mathit{Hom}}
\begin{document}

\title{Gorenstein AC-projective complexes}

\author{James Gillespie}
\address{Ramapo College of New Jersey \\
         School of Theoretical and Applied Science \\
         505 Ramapo Valley Road \\
         Mahwah, NJ 07430}
\email[Jim Gillespie]{jgillesp@ramapo.edu}
\urladdr{http://pages.ramapo.edu/~jgillesp/}
\thanks{Subject Classification: 18G25, 55U35}

\date{\today}

\begin{abstract}
Let $R$ be any ring with identity and $\ch$ the category of chain complexes of (left) $R$-modules. We show that the Gorenstein AC-projective chain complexes of~\cite{bravo-gillespie} are the cofibrant objects of an abelian model structure on $\ch$. The model structure is cofibrantly generated and is projective in the sense that the trivially cofibrant objects are the categorically projective chain complexes. We show that when $R$ is a Ding-Chen ring, that is, a two-sided coherent ring with finite self FP-injective dimension, then the model structure is finitely generated, and so its homotopy category is compactly generated. Constructing this model structure also shows that every chain complex over any ring has a Gorenstein AC-projective precover. These are precisely Gorenstein projective (in the usual sense) precovers whenever $R$ is either a Ding-Chen ring, or, a ring for which all level (left) $R$-modules have finite projective dimension. For a general (right) coherent ring $R$, the Gorenstein AC-projective complexes coincide with the Ding projective complexes of~\cite{Ding-Chen-complex-models} and so provide such precovers in this case.   
\end{abstract}

\maketitle

\section{introduction}\label{sec-intro}

With the goal of attaching a triangulated stable module category to a general ring, the Gorenstein AC-injective and Gorenstein AC-projective $R$-modules were introduced and studied in~\cite{bravo-gillespie-hovey}. It was shown there that the class of Gorenstein AC-projective modules form the cofibrant objects of an abelian model structure on the category $R$-Mod, of (left) $R$-modules. On the other hand, the dual Gorenstein AC-injectives are the fibrant objects of another model structure on $R$-Mod.  These concepts were extended to the category $\ch$, of chain complexes of $R$-modules, in~\cite{bravo-gillespie}. In particular, the Gorenstein AC-injective and Gorenstein AC-projective chain complexes were studied, and, the Gorenstein AC-injective complexes were shown to be the fibrant objects of a (cofibrantly generated) abelian model structure on $\ch$. However, as noted in the introduction to~\cite{bravo-gillespie}, the Gorenstein AC-projective model structure was not constructed there; it is much more technical to construct. It is the purpose of this paper to give this construction to complete the work in~\cite{bravo-gillespie}.   

Let us recall the definition of a Gorenstein AC-projective chain complex and give a precise statement of the main result in this paper.

\begin{definition}\label{def-Gorenstein AC-projective complex}
We call a chain complex $X$ \textbf{Gorenstein AC-projective} if there exists an exact complex of projective complexes $$\cdots \rightarrow P_1 \rightarrow P_0 \rightarrow P^0 \rightarrow P^1 \rightarrow \cdots$$ with $X = \ker{(P^0 \rightarrow P^1)}$ and which remains exact after applying $\Hom_{\ch}(-,L)$ for any level chain complex $L$; see Section~\ref{subsec-character duality} for the notion of a level chain complex.  
\end{definition}

It was shown in~\cite[Theorem~4.13]{bravo-gillespie} that $X$ is Gorenstein AC-projective if and only if each $X_n$ is a Gorenstein AC-projective $R$-module and any chain map $f : X \rightarrow L$ is null homotopic whenever $L$ is a level complex. For most rings commonly occurring in practice, the Gorenstein AC-projective complexes coincide with the usual Gorenstein projective complexes of~\cite{garcia-rozas}, or at least with the Ding projective complexes of~\cite[Section~3]{Ding-Chen-complex-models}. See the proof of Corollary~\ref{cor-precovers} and the following remarks at the end of Section~\ref{sec-Goren-AC-proj} for more precise statements.  

The theorem left open to prove is Theorem~\ref{them-Gorenstein AC-projectives complete cotorsion pair} below. We recall that by a \textbf{projective cotorsion pair} $(\class{W},\class{C})$ we mean a complete cotorsion pair, in some abelian category with enough projectives,  with $\class{W}$ thick (so closed under direct summands and satisfying the two-out-of-three property on short exact sequences) and such that $\class{W} \cap \class{C}$ is precisely the class of projective objects. By Hovey's correspondence between cotorsion pairs and abelian model structures~\cite{hovey}, such a cotorsion pair is equivalent to an abelian model structure on the category in which every object is fibrant, the objects in $\class{C}$ are cofibrant, and the objects in $\class{W}$ are trivial.  Such an abelian model structure is called $\textbf{projective}$ because the trivially cofibrant objects $\class{C} \cap \class{W}$ coincide with the projective objects. We now state the main result. 

\begin{theorem}\label{them-Gorenstein AC-projectives complete cotorsion pair}
Let $R$ be any ring and let $\class{GP}$ denote the class of Gorenstein AC-projective chain complexes. Set $\class{W} = \rightperp{\class{GP}}$, the right orthogonal with respect to $\Ext^1_{\ch}(-,-)$.
Then $(\class{GP}, \class{W})$ is a projective cotorsion pair in $\ch$. It is cogenerated by a set and so it is equivalent to a cofibrantly generated projective model structure on $\ch$. The homotopy category of this model structure is equivalent to $K(\class{GP})$, the triangulated category of all Gorenstein AC-projective chain complexes modulo the usual chain homotopy relation. 
\end{theorem}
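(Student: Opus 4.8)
The plan is to construct the cotorsion pair $(\class{GP},\class{W})$ by exhibiting a small set $\class{S}$ of Gorenstein AC-projective complexes that cogenerates it, and then to verify Hovey's criteria for a projective cotorsion pair. The strategy mirrors the module-level argument of~\cite{bravo-gillespie-hovey}, but the passage to $\ch$ forces us to keep careful track of the "level test complexes" appearing in Definition~\ref{def-Gorenstein AC-projective complex}. First I would record the key closure properties of $\class{GP}$: using the characterization of~\cite[Theorem~4.13]{bravo-gillespie}, $\class{GP}$ is closed under extensions, under direct sums, under direct summands (retracts), and under suspensions, and it contains all projective complexes; moreover any short exact sequence $0 \to A \to B \to C \to 0$ with $C \in \class{GP}$ and $A,B \in \class{GP}$ behaves well, and one has the "syzygy" closure coming from the defining totally acyclic resolutions. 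These are exactly the properties needed to know that $\rightperp{\class{GP}}$ is thick and that $\class{GP} \cap \rightperp{\class{GP}}$ will turn out to be the projectives.

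Next I would produce the cogenerating set. Since each $X_n$ of a Gorenstein AC-projective complex is a Gorenstein AC-projective module, and the latter are known (by~\cite{bravo-gillespie-hovey}) to be filtered-generated by a set of "small" Gorenstein AC-projective modules, one runs an Eklof–Trlifaj style argument: find a regular cardinal $\kappa$ (depending on $R$) such that every $X \in \class{GP}$ is the union of a continuous transfinite filtration by subcomplexes, each of size $<\kappa$, whose successive quotients lie in $\class{GP}$. The nontrivial point is that a subcomplex can be chosen so that not only are the quotients in $\class{GP}$ "levelwise," but the null-homotopy condition against level complexes is inherited — this uses that level complexes themselves satisfy a Löwenheim–Skolem type property (they are filtered by small level complexes, cf. Section~\ref{subsec-character duality}), so a counting/elementary-submodel argument lets us simultaneously control homotopies. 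Taking $\class{S}$ to be a set of representatives of the $<\kappa$-sized members of $\class{GP}$, Eklof's lemma gives $\class{S}^{\perp} = \class{GP}^{\perp} = \class{W}$, and then $({}^{\perp}(\class{S}^{\perp}), \class{S}^{\perp})$ is a complete cotorsion pair with ${}^{\perp}(\class{S}^{\perp}) = \class{GP}$; the last equality is where the filtration theorem is used to see that ${}^{\perp}\class{W}$ is no bigger than $\class{GP}$.

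With completeness in hand, I would verify the remaining Hovey conditions: $\class{W}$ is thick (immediate from $\class{GP}$ being closed under the relevant operations, via a standard $\Ext$ long-exact-sequence argument), and $\class{GP} \cap \class{W}$ equals the class of projective complexes. For the last equality, $\supseteq$ is clear since projectives are in both classes; for $\subseteq$, if $X \in \class{GP} \cap \class{W}$ then $\Ext^1_{\ch}(X,-)$ kills all of $\class{W}$, but $X$ also sits as a kernel in its defining exact complex of projectives $\cdots \to P_1 \to P^0 \to \cdots$, and one shows the relevant short exact sequence $0 \to X \to P^0 \to X' \to 0$ splits because $X' \in \class{GP} \subseteq {}^{\perp}\class{W}$ forces $\Ext^1(X', \ker) = \Ext^1(X',X)$ to vanish — wait, rather: one checks directly that $X$ is a summand of the projective $P^0$, hence projective. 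This is the standard "Gorenstein-projective-and-has-finite-projective-dimension implies projective" argument adapted to complexes. Finally, Hovey's correspondence~\cite{hovey} converts the projective cotorsion pair into a cofibrantly generated projective abelian model structure (the generating cofibrations coming from $\class{S}$ together with a set generating the projectives), and identifies its homotopy category: since every object is fibrant and the cofibrant objects are exactly $\class{GP}$ with weak equivalences between them being the chain homotopy equivalences (two maps in $\class{GP}$ are homotopic in the model-categorical sense iff their difference factors through a projective, which for complexes is the usual chain homotopy), the homotopy category is $K(\class{GP})$.

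The main obstacle I expect is the simultaneous-filtration step: showing that every $X \in \class{GP}$ admits a transfinite filtration by $<\kappa$-sized subcomplexes whose quotients remain Gorenstein AC-projective \emph{including} the "null-homotopic against all level complexes" clause. Levelwise Gorenstein AC-projectivity filters easily from the module case, but the homotopy condition is a global, non-local constraint, and stitching it into a Löwenheim–Skolem construction requires choosing the submodules closed under enough structure (the defining totally acyclic resolution, the contracting homotopies witnessing triviality against a cofinal set of small level complexes, and the relevant $\Hom$-sets) — essentially an elementary-submodel argument in a large enough fragment of set theory. Getting the cardinal bookkeeping right, so that $\class{S}$ is genuinely a set and the filtration quotients are genuinely in $\class{GP}$, is the technical heart of the proof.
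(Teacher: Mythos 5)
Your proposal takes a genuinely different route from the paper, and the difficulty you flag at the end is exactly the obstacle the paper is designed to avoid.

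The paper does not attempt to filter $\class{GP}$ directly in $\ch$. Instead it works in the category $\textnormal{Ch}(\ch)$ of double complexes and filters the class $\class{C}$ of \emph{AC-acyclic complexes of projective complexes} (the totally acyclic resolutions appearing in Definition~\ref{def-Gorenstein AC-projective complex}), not the class $\class{GP}$ of their cycles. Two reductions make that feasible. First, the $\Hom$-exactness condition "$\homcomplex(\mathbb{C},L)$ exact for all level $L$" is converted, via character-module duality (Lemma~\ref{lemma-AC-acyclic-bicomplexes} and Proposition~\ref{prop-dual-exact}), into the covariant condition "$A\,\overline{\otimes}\,\mathbb{C}$ exact for all absolutely clean $A$," and then (Lemma~\ref{lemma-test-complex}) reduced to a \emph{single} test complex $A$. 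Because $A\,\overline{\otimes}\,-$ commutes with direct limits, the filtration problem becomes local. Second, Kaplansky's theorem lets one decompose each component $\mathbb{P}_n$ of a complex of projective complexes into countably generated direct summands, and a back-and-forth covering-lemma argument (Theorem~\ref{theorem-filtrations for complexes of projectives}) produces the $\kappa$-bounded filtration of $\mathbb{C}$ by $A$-acyclic subcomplexes of projective complexes. The filtration of a Gorenstein AC-projective complex $X$ is then inherited for free: write $X = Z_0\mathbb{C}$, filter $\mathbb{C} = \cup\,\mathbb{Q}_\alpha$, and observe $X = \cup\,Z_0\mathbb{Q}_\alpha$. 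Meanwhile the cotorsion pair $(\class{C},\rightperp{\class{C}})$ in $\textnormal{Ch}(\ch)$ is pushed down to $\ch$ via $\mathbb{X}\mapsto\mathbb{X}_0/B_0\mathbb{X}$ (Lemmas~\ref{lem-spheres}--\ref{lem-proj-cycles-of-W}), yielding enough projectives for $(\class{GP},\class{W})$ without ever invoking a filtration of $\class{GP}$ to prove completeness.

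By contrast, your plan requires proving deconstructibility of $\class{GP}$ directly, which means a Löwenheim--Skolem/elementary-submodel argument that simultaneously preserves the componentwise Gorenstein AC-projectivity and the global "null-homotopic against all level complexes" clause. You correctly identify this as the technical heart, but it is genuinely harder than anything in the paper: the $\Hom$-condition is contravariant and does not interact well with filtrations, and you would have to close your submodels under contracting homotopies against a cofinal set of small level complexes and under the ambient totally acyclic resolution. This can likely be made to work, but it is not how the paper proceeds, and you should be aware that the duality reduction to a tensor condition is the key idea that lets the paper sidestep it. The rest of your outline -- the closure properties of $\class{GP}$ (the paper's Lemma~\ref{lemma-retracts}), the thickness of $\class{W}$, the argument that $\class{GP}\cap\class{W}$ is the class of projectives, and the identification of the homotopy category with $K(\class{GP})$ -- is correct and matches the paper.
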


We also point out that the homotopy category is a well generated category in the sense of ~\cite{neeman-well generated}. Indeed once we construct a cofibrantly generated model structure on a locally presentable (pointed) category, a main result from~\cite{rosicky-brown representability combinatorial model srucs} assures us that its homotopy category is well generated. So the point is to build a cofibrantly generated model structure, which due to the work of Hovey boils down to constructing a projective cotorsion pair that is \emph{cogenerated by a set}~\cite{hovey}.


Section~\ref{sec-Ding-Chen} concerns the case of when $R$ is a Ding-Chen ring in the sense of~\cite{ding and chen 93, ding and chen 96, gillespie-ding}. This is a two-sided coherent ring $R$ for which $R$ has finite self FP-injective (absolutely pure) dimension when viewed as either a left or a right module over itself. The result proved, Theorem~\ref{them-Ding-Chen case}, says a few things about the model structure of Theorem~\ref{them-Gorenstein AC-projectives complete cotorsion pair}. First, the identify functor from it to the Gorenstein AC-injective model structure of~\cite{bravo-gillespie} is a Quillen equivalence in this case. Second, the model structure is finitely generated and so it follows from a result of Hovey~\cite[Corollary~7.4.4]{hovey-model-categories} that the associated homotopy category is compactly generated. Finally,  Theorem~\ref{them-Ding-Chen case} gives a further description of the homotopy category.  In particular, we see that the homotopy category is equivalent to the chain homotopy category of all chain complexes $X$ (resp.  $Y$) with each component $X_n$ (resp. $Y_n$) a Gorenstein projective (resp. Gorenstein injective) $R$-module in the usual sense of~\cite{enochs-jenda-book}. This follows from the characterizations of Ding modules and complexes provided in~\cite{gillespie-ding-modules}.

The plan to prove Theorem~\ref{them-Gorenstein AC-projectives complete cotorsion pair} is to imitate the proof in~\cite{bravo-gillespie-hovey} of the Gorenstein AC-projective model structure on $R$-modules, which first built a Quillen equivalent model structure on chain complexes and then passed it down to the category of $R$-modules. We follow the same approach, working in $\textnormal{Ch}(\ch)$, the category of chain complexes of chain complexes. This is the same as the category of bicomplexes. However, changing signs to work with bicomplexes misses the point.  
Perhaps the correct perspective is to follow the idea in~\cite{gillespie-hovey-graded-gorenstein}. One can first identify $\ch$ with the category of graded $R[x]/(x^2)-$modules where $R[x]/(x^2)$ is thought of as a graded ring, with a copy of $R$ in degrees 0 and $-1$, and putting $x$ in degree $-1$. Then to imitate the proof in~\cite{bravo-gillespie-hovey} we should be working with chain complexes of graded $R[x]/(x^2)-$modules. However, for our purposes we find it be easier to just stick with the category $\textnormal{Ch}(\ch)$, and we refer to an object in this category as a \emph{double complex} or simply a \emph{complex of complexes}. The reason for this is mainly because the literature on chain complexes already has many handy references for the graded tensor product and Hom that we will use, and these are stated in terms of chain complexes and not graded $R[x]/(x^2)-$modules. So Section~\ref{sec-proj} shows how to construct projective model structures on double complexes. Then Section~\ref{sec-double-complexes} uses this to build a model structure on double complexes that is Quillen equivalent to the one in Theorem~\ref{them-Gorenstein AC-projectives complete cotorsion pair}. We finally are able to prove that main theorem in Section~\ref{sec-Goren-AC-proj} by passing the model structure on double complexes down to the ground category of chain complexes. We also point out at the end of Section~\ref{sec-Goren-AC-proj} how Theorem~\ref{them-Gorenstein AC-projectives complete cotorsion pair} provides for the existence of Gorenstein projective (or at least Ding projective) precovers in $\ch$ for the most commonly used coherent rings $R$. Section~\ref{sec-Ding-Chen} describes the special case when $R$ is a Ding-Chen ring.

\section{preliminaries}\label{sec-prelim}

Throughout the paper $R$ denotes a general ring with identity. An $R$-module will mean a left $R$-module, unless stated otherwise. The category of $R$-modules will be denoted $\rmod$ and the associated category of chain complexes by $\ch$. 

The point of this section is to provide a short review of the preliminary concepts, and notations, which are foundational to this paper . It is all standard except the last Section~\ref{subsec-character duality} which summarizes needed facts from~\cite{bravo-gillespie-hovey} and~\cite{bravo-gillespie}. Also, the useful Lemma~\ref{lemma-transfinite extensions of spheres and disks} has, to the author's knowledge, not appeared in the literature.

\subsection{Cotorsion pairs and precovers}\label{sec-cot} Let $\cat{A}$ be an abelian category.  By definition, a pair of classes $(\class{X},\class{Y})$ in $\cat{A}$ is called a \emph{cotorsion pair} if $\class{Y} = \rightperp{\class{X}}$ and $\class{X} = \leftperp{\class{Y}}$. Here, given a class of objects $\class{C}$ in $\cat{A}$, the right orthogonal  $\rightperp{\class{C}}$ is defined to be the class of all objects $X$ such that $\Ext^1_{\cat{A}}(C,X) = 0$ for all $C \in \class{C}$. Similarly, we define the left orthogonal $\leftperp{\class{C}}$. We call the cotorsion pair \emph{hereditary} if $\Ext^i_{\cat{A}}(X,Y) = 0$ for all $X \in \class{X}$, $Y \in \class{Y}$, and $i \geq 1$. The cotorsion pair is \emph{complete} if it has enough injectives and enough projectives. This means that for each $A \in \cat{A}$ there exist short exact sequences $0 \xrightarrow{} A \xrightarrow{} Y \xrightarrow{} X \xrightarrow{} 0$ and $0 \xrightarrow{} Y' \xrightarrow{} X' \xrightarrow{} A \xrightarrow{} 0$ with $X,X' \in \class{X}$ and $Y,Y' \in \class{Y}$.
Standard references include~\cite{enochs-jenda-book} and~\cite{trlifaj-book} and connections to abelian model categories can be found in~\cite{hovey} and~\cite{gillespie-hereditary-abelian-models}.

Complete cotorsion pairs are closely related to the study of precovers and pre-envelopes. This area has been extensively studied by many authors, especially Enochs, Jenda, Estrada, Garc\'\i{}a-Rozas, and many coauthors; For example, see~\cite{enochs-jenda-book, garcia-rozas}. Let $\class{X}$ be a class of objects in $\cat{A}$. A morphism  $\phi : X \xrightarrow{} A$ 
  in $\cat{A}$ is called an
  \emph{$\class{X}$-precover} if $X \in \class{X}$
  and $$\Hom_{\cat{A}}(X',X) \xrightarrow{} \Hom_{\cat{A}} (X',A) \xrightarrow{} 0$$ is exact
  for every $X' \in \class{X}$. Further, if $\ker{\phi} \in \rightperp{\class{X}}$, then $\phi$ is called a \emph{special $\class{X}$-precover.} Their is a dual notion of a \emph{(special) $\class{X}$-pre-envelope}. The connection to cotorsion pairs is the easy observation that if $(\class{X},\class{Y})$ is a complete cotorsion pair, then each object $A \in \cat{A}$ has a special $\class{X}$-precover and a special $\class{Y}$-pre-envelope.

\subsection{Projective and injective cotorsion pairs} 
Assume $\cat{A}$ is a bicomplete abelian category with enough projectives. By a \emph{projective cotorsion pair}  in $\cat{A}$ we mean a complete cotorsion pair $(\class{C},\class{W})$ for which $\class{W}$ is thick and $\class{C} \cap \class{W}$ is the class of projective objects. Such a cotorsion pair is equivalent to a \emph{projective model structure} on $\cat{A}$. By this we mean the model structure is abelian in the sense of~\cite{hovey} and all objects are fibrant. The cofibrant objects are exactly those in $\class{C}$ and the trivial objects are exactly those in $\class{W}$.  We also have the dual notion of \emph{injective cotorsion pairs} $(\class{W},\class{F})$ which give us \emph{injective model structures} on abelian categories with enough projectives. See~\cite{gillespie-recollement} for more on projective and injective cotorsion pairs. One important fact is that such cotorsion pairs are always hereditary and this implies that the associated homotopy category must be stable; that is, it is not just pre-triangulated but a triangulated category. We will use the following proposition to construct projective cotorsion pairs in this paper. 

\begin{proposition}[Construction of a projective model
structure]\label{prop-how to create a projective model structure}
Let $\cat{A}$ be a bicomplete abelian category with enough projectives
and denote the class of projectives by $\class{P}$. Let $\class{C}$ be
any class of objects and set $\class{W} =
\rightperp{\class{C}}$. Suppose the following conditions hold:
\begin{enumerate}
\item $(\class{C},\class{W})$ is a complete cotorsion pair.
\item $\class{W}$ is thick.
\item $\class{P} \subseteq \class{W}$.
\end{enumerate}
Then there is an abelian model structure on $\cat{A}$ where every
object is fibrant, $\class{C}$ are the cofibrant objects, $\class{W}$
are the trivial objects, and $\class{P} = \class{C} \cap \class{W}$
are the trivially cofibrant objects. In other words, $(\class{C},\class{W})$ is a projective cotorsion pair. 
\end{proposition}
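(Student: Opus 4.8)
The plan is to recognize this as an instance of Hovey's correspondence between abelian model structures and compatible pairs of complete cotorsion pairs~\cite{hovey}, packaged as a Hovey triple. Since we want every object to be fibrant, I would apply that correspondence to the triple $(\class{C},\class{W},\cat{A})$; with the fibrant class taken to be all of $\cat{A}$, the correspondence asks only that $\class{W}$ be thick (hypothesis (2)) and that the two pairs $(\class{C},\class{W}\cap\cat{A}) = (\class{C},\class{W})$ and $(\class{C}\cap\class{W},\cat{A})$ be complete cotorsion pairs. The first is hypothesis (1), so the whole proof comes down to showing $\class{C}\cap\class{W} = \class{P}$ and that $(\class{P},\cat{A})$ is a complete cotorsion pair.

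For $\class{C}\cap\class{W} = \class{P}$: the inclusion $\class{P}\subseteq\class{C}$ holds because $\Ext^1_{\cat{A}}(P,-)=0$ for projective $P$, so $P\in\leftperp{\class{W}}=\class{C}$, and $\class{P}\subseteq\class{W}$ is hypothesis (3); hence $\class{P}\subseteq\class{C}\cap\class{W}$. Conversely, given $X\in\class{C}\cap\class{W}$, I would choose a short exact sequence $0\to K\to P\to X\to 0$ with $P$ projective, using that $\cat{A}$ has enough projectives. Then $P\in\class{W}$ and $X\in\class{W}$, so thickness of $\class{W}$ (two-out-of-three on short exact sequences) gives $K\in\class{W}$. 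Since $X\in\class{C}$ and $K\in\class{W}=\rightperp{\class{C}}$, we get $\Ext^1_{\cat{A}}(X,K)=0$, so the sequence splits and $X$ is a direct summand of the projective $P$; thus $X\in\class{P}$. This is the one genuinely nonformal step. Completeness of $(\class{P},\cat{A})$ is then routine: $\rightperp{\class{P}}=\cat{A}$ and $\leftperp{\cat{A}}=\class{P}$ because $\cat{A}$ has enough projectives, the ``enough projectives'' half of completeness is again just enough projectives in $\cat{A}$, and the ``enough injectives'' half is witnessed by $0\to A\xrightarrow{=}A\to 0\to 0$.

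Finally I would feed these into Hovey's theorem: as $\cat{A}$ is bicomplete and abelian, $\class{W}$ is thick, and $(\class{C}\cap\class{W},\cat{A})$ and $(\class{C},\class{W})$ are complete cotorsion pairs, there is a unique abelian model structure on $\cat{A}$ with cofibrant objects $\class{C}$, fibrant objects all of $\cat{A}$, and trivial objects $\class{W}$; its trivially cofibrant objects are $\class{C}\cap\class{W}=\class{P}$. That is precisely the projective model structure claimed, so $(\class{C},\class{W})$ is a projective cotorsion pair. The only place any real work happens is the splitting argument identifying $\class{C}\cap\class{W}$ with $\class{P}$; everything else is unwinding definitions and citing Hovey's correspondence.
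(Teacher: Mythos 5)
Your proposal is correct, and it is the standard argument for this kind of statement. The paper itself gives no proof of Proposition~\ref{prop-how to create a projective model structure}, deferring to Hovey's correspondence and to~\cite{gillespie-recollement} for background on projective cotorsion pairs; the route you take --- realize $(\class{C},\class{W},\cat{A})$ as a Hovey triple, reduce to checking that $(\class{C}\cap\class{W},\cat{A})$ is a complete cotorsion pair, and pin down $\class{C}\cap\class{W}=\class{P}$ via the splitting argument $\Ext^1_{\cat{A}}(X,K)=0$ from a projective presentation $0\to K\to P\to X\to 0$ --- is exactly the argument those references use. The splitting step is indeed the only nonformal point, and you handle it correctly by combining thickness of $\class{W}$ (to get $K\in\class{W}$) with $\class{W}=\rightperp{\class{C}}$ (to kill the extension class).
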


\subsection{Chain complexes on abelian categories}\label{subsec-complexes}
Let $\cat{A}$ be an abelian category. We denote the corresponding category of chain complexes by $\cha{A}$. In the case $\cat{A} = \rmod$, we denote it by $\ch$. Our convention is that the differentials of our chain complexes lower degree, so $\cdots
\xrightarrow{} X_{n+1} \xrightarrow{d_{n+1}} X_{n} \xrightarrow{d_n}
X_{n-1} \xrightarrow{} \cdots$ is a chain complex. We also have the chain homotopy category of $\cat{A}$, denoted $K(\cat{A})$. Its objects are also chain complexes but its morphisms are chain homotopy classes of chain maps.
Given a chain complex $X$, the
\emph{$n^{\text{th}}$ suspension of $X$}, denoted $\Sigma^n X$, is the complex given by
$(\Sigma^n X)_{k} = X_{k-n}$ and $(d_{\Sigma^n X})_{k} = (-1)^nd_{k-n}$.
For a given object $A \in \cat{A}$, we denote the \emph{$n$-disk on $A$} by $D^n(A)$. This is the complex consisting only of $A \xrightarrow{1_A} A$ concentrated in degrees $n$ and $n-1$, and 0 elsewhere. We denote the \emph{$n$-sphere on $A$} by $S^n(A)$, and this is the complex consisting only of $A$ in degree $n$ and 0 elsewhere.

Given two chain complexes $X, Y \in \cha{A}$ we define $\homcomplex(X,Y)$ to
be the complex of abelian groups $ \cdots \xrightarrow{} \prod_{k \in
\Z} \Hom(X_{k},Y_{k+n}) \xrightarrow{\delta_{n}} \prod_{k \in \Z}
\Hom(X_{k},Y_{k+n-1}) \xrightarrow{} \cdots$, where $(\delta_{n}f)_{k}
= d_{k+n}f_{k} - (-1)^n f_{k-1}d_{k}$.
We get a functor
$\homcomplex(X,-) \mathcolon \cha{A} \xrightarrow{} \textnormal{Ch}(\Z)$. Note that this functor takes exact sequences to left exact sequences,
and it is exact if each $X_{n}$ is projective. Similarly the contravariant functor $\homcomplex(-,Y)$ sends exact sequences to left exact sequences and is exact if each $Y_{n}$ is injective. It is an exercise to check that the homology satisfies $H_n[Hom(X,Y)] = K(\cat{A})(X,\Sigma^{-n} Y)$.

Being an abelian category, $\cha{A}$ comes with Yoneda Ext groups. In particular, $\Ext^1_{\cha{A}}(X,Y)$ will denote the group of (equivalences classes) of short exact sequences $0 \xrightarrow{} Y \xrightarrow{} Z \xrightarrow{} X \xrightarrow{} 0$ under the Baer sum operation. There is a subgroup $\Ext^1_{dw}(X,Y) \subseteq \Ext^1_{\cha{A}}(X,Y)$ consisting of the ``degreewise split'' short exact sequences. That is,
those for which each $0 \xrightarrow{} Y_n \xrightarrow{} Z_n \xrightarrow{} X_n \xrightarrow{} 0$ is split exact. The following lemma gives a well-known connection between $\Ext^1_{dw}$ and the above hom-complex $\homcomplex$.

\begin{lemma}\label{lemma-homcomplex-basic-lemma}
For chain complexes $X$ and $Y$, we have isomorphisms:
$$\Ext^1_{dw}(X,\Sigma^{(-n-1)}Y) \cong H_n \homcomplex(X,Y) =
K(\cat{A})(X,\Sigma^{-n} Y)$$ In particular, for chain complexes $X$ and $Y$, $\homcomplex(X,Y)$ is
exact iff for any $n \in \Z$, any chain map $f \mathcolon \Sigma^nX \xrightarrow{} Y$ is
homotopic to 0 (or iff any chain map $f \mathcolon X \xrightarrow{} \Sigma^nY$ is homotopic
to 0).
\end{lemma}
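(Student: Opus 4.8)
The plan is to prove the two displayed isomorphisms in turn and then read off the ``in particular'' clause. The equality $H_n\homcomplex(X,Y) = K(\cat{A})(X,\Sigma^{-n}Y)$ is the exercise remarked upon just before the statement, and I would carry it out by unwinding definitions. An element of $\homcomplex(X,Y)_n = \prod_k \Hom(X_k,Y_{k+n})$ is a family $f = (f_k)$ with $f_k \colon X_k \to Y_{k+n}$; the cocycle condition $\delta_n f = 0$ reads $d_{k+n}f_k = (-1)^n f_{k-1}d_k$ for all $k$. Since $(d_{\Sigma^{-n}Y})_k = (-1)^n d_{k+n}$, this is precisely the condition that $f$ is a chain map $X \to \Sigma^{-n}Y$. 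Likewise, $f = \delta_{n+1}h$ for some $h \in \prod_k \Hom(X_k,Y_{k+n+1})$ unwinds --- after absorbing a sign $(-1)^n$ into $h$ --- to the assertion that $f$ is null homotopic as a map $X \to \Sigma^{-n}Y$. Hence the $n$-cycles of $\homcomplex(X,Y)$ are exactly the chain maps $X\to\Sigma^{-n}Y$ and the $n$-boundaries are exactly the null homotopic ones, giving the claimed equality.

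For the first isomorphism, set $W = \Sigma^{-n-1}Y$ and consider a degreewise split short exact sequence $0 \to W \to Z \to X \to 0$ representing a class in $\Ext^1_{dw}(X,W)$. Choosing a degreewise splitting identifies $Z_k = W_k \oplus X_k$ with the maps to and from $W$ and $X$ the evident ones; the differential of $Z$ is then forced to have block-triangular form with diagonal entries $d^W_k$ and $d^X_k$ and upper-right entry some $\phi_k \colon X_k \to W_{k-1}$. The identity $d^Z \circ d^Z = 0$ is equivalent to $d^W_{k-1}\phi_k + \phi_{k-1}d^X_k = 0$ for all $k$, i.e.\ to $\phi$ being a cycle in $\homcomplex(X,W)_{-1}$; and replacing the splitting by another one alters $\phi$ by a boundary. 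Conversely, every cycle $\phi$ in degree $-1$ defines an extension via the same block-triangular differential. This produces a well-defined bijection $\Ext^1_{dw}(X,W) \cong H_{-1}\homcomplex(X,W)$, and one checks it carries the Baer sum to the addition of homology classes. Finally, comparing degrees shows $\homcomplex(X,\Sigma^{m}Y)_j = \homcomplex(X,Y)_{j-m}$ as graded abelian groups with differentials agreeing up to a sign, so $H_{-1}\homcomplex(X,\Sigma^{-n-1}Y) \cong H_n\homcomplex(X,Y)$; combined with the case $W = \Sigma^{-n-1}Y$ of the bijection just constructed, this gives $\Ext^1_{dw}(X,\Sigma^{-n-1}Y) \cong H_n\homcomplex(X,Y)$.

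The ``in particular'' clause is then immediate: $\homcomplex(X,Y)$ is exact iff $H_n\homcomplex(X,Y) = 0$ for every $n \in \Z$, iff $K(\cat{A})(X,\Sigma^{-n}Y) = 0$ for every $n$, iff every chain map $X \to \Sigma^{-n}Y$ is homotopic to $0$; and since $n$ ranges over all of $\Z$, reindexing shows this is equivalent to every chain map $\Sigma^n X \to Y$ being homotopic to $0$. The only genuine obstacle here is the sign bookkeeping coming from the suspension functor and from whichever chain-homotopy convention is in force; once those are pinned down consistently, each step is a direct unwinding of definitions.
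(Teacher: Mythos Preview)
Your proposal is correct and is the standard unwinding-of-definitions argument for this well-known fact. The paper itself does not supply a proof of this lemma: it is stated as a ``well-known connection'' in the preliminaries section and used as a black box thereafter, so there is nothing to compare your argument against beyond noting that you have filled in exactly what the paper leaves to the reader.
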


In the case of $\cat{A} = R\textnormal{-Mod}$, we recall the usual tensor product of chain complexes. Given that $X$ (resp. $Y$) is a complex of right (resp. left) $R$-modules, the tensor product $X
\otimes Y$ is defined by $(X \otimes Y)_n = \oplus_{i+j=n} (X_i
\otimes Y_j)$ in degree $n$. The boundary map $\delta_n$ is defined
on the generators by $\delta_n (x \otimes y) = dx \otimes y +
(-1)^{|x|} x \otimes dy$, where $|x|$ is the degree of the element
$x$.

\subsection{Grothendieck categories}
Recall that a \emph{Grothendieck category} $\cat{G}$ is a cocomplete abelian category with a set of generators and such that direct limits are exact. Grothendieck categories automatically have enough injectives, and so such categories often admit injective cotorsion pairs yielding injective model structures on $\cat{G}$. If $\cat{G}$ possesses a set of projective generators then we can also look for projective cotorsion pairs in $\cat{G}$. In this paper we will be working with categories of $R$-modules, chain complexes of $R$-modules, and bicomplexes of $R$-modules. These are all Grothendieck categories possessing a set of projective generators.

\subsection{Disks and spheres and cotorsion pairs} Let $\cat{G}$ be a Grothendieck category. We point out a lemma that is often useful for constructing chain complexes in one side of a given cotorsion pair in $\cha{G}$. Recall that we say an object $M \in \cat{G}$ is a \emph{transfinite extension} of a set of objects $\class{S}$ when there is an ordinal $\lambda$ and $M = \varinjlim_{\alpha < \lambda} M_{\alpha}$ for some $\lambda$-diagram of monomorphisms $$M_0 \xrightarrow{i_0} M_1 \xrightarrow{i_1} \cdots M_{\alpha} \xrightarrow{i_{\alpha}} M_{\alpha +1} \xrightarrow{} \cdots $$ having $M_0 , \cok{i_{\alpha}} \in \class{S}$ for each $\alpha < \lambda$ and such that $M = \varinjlim_{\alpha < \gamma} M_{\alpha}$ for each limit ordinal $\gamma < \lambda$. It is well known that the left half of a cotorsion pair is closed under transfinite extensions and this is known as the Eklof Lemma. The dual statement is also true. We say an object $M$ is an \emph{inverse transfinite extension} of a set of objects $\class{S}$ when $M = \varprojlim_{\alpha < \lambda} M_{\alpha}$ for some $\lambda$-diagram of surjections $$M_0 \xleftarrow{i_0} M_1 \xleftarrow{i_1} \cdots M_{\alpha} \xleftarrow{i_{\alpha}} M_{\alpha +1} \xleftarrow{} \cdots $$ having $M_0 , \ker{i_{\alpha}} \in \class{S}$ for each $\alpha < \lambda$ and such that $M = \varprojlim_{\alpha < \gamma} M_{\alpha}$ for each limit ordinal $\gamma < \lambda$. It was shown in~\cite[Lemma~2.3]{trlifaj-Ext and inverse limits} that the right half of a cotorsion pair in $R$-Mod is closed under inverse transfinite extensions. 
These ideas are applied to get the following lemma.

\begin{lemma}\label{lemma-transfinite extensions of spheres and disks}
Let $\cat{G}$ be a Grothendieck category with a projective generator and let $(\class{X},\class{Y})$ be a cotorsion pair of chain complexes in $\cha{G}$. Suppose $\class{C}$ is some given class of objects in $\cat{G}$.
\begin{enumerate}
\item If the spheres $S^n(C)$ are in $\class{X}$ whenever $C$ is in $\class{C}$, then any bounded below complex with entries in $\class{C}$ is also in $\class{X}$.
\item If the disks $D^n(C)$ are in $\class{X}$ whenever $C$ is in $\class{C}$, then any bounded above exact complex with cycles in $\class{C}$ is also in $\class{X}$.
\item If the spheres $S^n(C)$ are in $\class{Y}$ whenever $C$ is in $\class{C}$, then any bounded above complex with entries in $\class{C}$ is also in $\class{Y}$.
\item If the disks $D^n(C)$ are in $\class{Y}$ whenever $C$ is in $\class{C}$, then any bounded below exact complex with cycles in $\class{C}$ is also in $\class{Y}$.
\end{enumerate}
\end{lemma}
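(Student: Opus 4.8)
The plan is to prove all four parts by reducing to the Eklof Lemma (for parts (1) and (2)) and its dual for inverse transfinite extensions (for parts (3) and (4)), together with the standard observation that a bounded complex can be assembled from spheres and disks via finitely many short exact sequences. First I would record the two elementary short exact sequences of complexes that serve as building blocks: for any object $C \in \cat{G}$ there is a degreewise-split exact sequence $0 \to S^{n-1}(C) \to D^n(C) \to S^n(C) \to 0$, and for any bounded-below complex $X$ there is, for each $n$ below which $X$ vanishes, a short exact sequence $0 \to S^n(X_n) \to X_{\leq n} \to X_{\leq n-1}' \to 0$ expressing $X$ as built up one degree at a time from spheres on its entries; dually, a bounded-above exact complex with cycles $Z_n$ decomposes via short exact sequences $0 \to D^{n}(Z_{n-1}) \to X_{\geq ?} \to (\text{shift}) \to 0$ built from disks on its cycles. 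These are all finite (or countable, in the unbounded direction) transfinite extensions.

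For part (1): given a bounded-below complex $X$ with all $X_n \in \class{C}$, say $X_k = 0$ for $k < m$, write $X = \varinjlim_{n} X_{[m,n]}$ where $X_{[m,n]}$ is the brutal truncation keeping degrees $m$ through $n$. Each $X_{[m,n]}$ is obtained from $X_{[m,n-1]}$ by a short exact sequence with cokernel $S^n(X_n) \in \class{X}$ (using the hypothesis and $X_n \in \class{C}$), and the base case $X_{[m,m]} = S^m(X_m) \in \class{X}$. Hence $X$ is a transfinite extension of spheres $S^n(C)$ with $C \in \class{C}$, so $X \in \class{X}$ by the Eklof Lemma, which applies since $\class{X}$ is the left half of a cotorsion pair in the Grothendieck category $\cha{G}$ (which has a projective generator, hence enough projectives — the hypothesis needed to invoke Eklof). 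Part (2) is analogous but using disks: a bounded-above exact complex with cycles in $\class{C}$ is, reading from the top down, a transfinite extension of disks $D^n(Z_n)$, each $Z_n \in \class{C}$, via the canonical filtration by the exact subcomplexes generated in each degree; one checks the successive cokernels are disks $D^n(C)$ with $C \in \class{C}$, and again the Eklof Lemma gives membership in $\class{X}$.

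Parts (3) and (4) are the formal duals, replacing transfinite extensions by \emph{inverse} transfinite extensions and the Eklof Lemma by~\cite[Lemma~2.3]{trlifaj-Ext and inverse limits}; here one must check that that result, stated for $R$-Mod, applies to $\cha{G}$ — it does, since $\cha{G}$ is again a module-like Grothendieck category, or alternatively one can cite the version for general Grothendieck categories with a projective generator. For part (3), a bounded-above complex with entries in $\class{C}$ is the inverse limit of its brutal truncations $X^{[n, M]}$ (degrees $n$ through $M$, the top), with kernels $S^n(X_n) \in \class{Y}$; for part (4), a bounded-below exact complex with cycles in $\class{C}$ is an inverse transfinite extension of disks $D^n(Z_{n-1}) \in \class{Y}$.

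The only subtle point — and the step I expect to require the most care — is the direction of the transfinite filtration versus the direction of boundedness: in part (1) we build \emph{upward} from the bottom (a genuine transfinite extension, so Eklof applies), whereas in part (2) the exact complex is bounded \emph{above} and we must filter \emph{downward} from the top, which is still a transfinite extension because each finite stage is a subcomplex of the next, the union being all of $X$; one has to verify that the brutal truncations of an exact complex, truncated below a moving degree, really do have the stated disks as cokernels and really do exhaust $X$ as a colimit (rather than needing an inverse limit). Getting the four variance bookkeeping cases straight — which two use $\varinjlim$ + Eklof and which two use $\varprojlim$ + its dual — is where an error would most likely creep in, so I would set up a single lemma about ``staircase'' filtrations of complexes by spheres/disks and then apply it four times.
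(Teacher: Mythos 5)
Your proposal is correct and follows essentially the same strategy as the paper: express the complex as a transfinite extension (or inverse transfinite extension) of spheres or disks on the relevant objects, and invoke the Eklof Lemma (for the left class) or Trlifaj's dual result~\cite[Lemma~2.3]{trlifaj-Ext and inverse limits} (for the right class). The only superficial difference is that the paper proves (1) and (4) and appeals to duality for (2) and (3), whereas you spell out the filtrations for all four cases; the underlying decompositions -- brutal truncations for the sphere cases, ``soft'' truncations through the cycles for the disk cases -- are the same as in the paper.
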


\begin{proof}
Note that (1) and (3) are dual statements and (2) and (4) are dual. We will prove (1) and (4). For (1), suppose that $(X,d)$ is a bounded below complex with entries in $\class{C}$. It is easy to check that $X$ can be expressed as a transfinite extension of spheres $S^n(X_n)$, on the components $X_n$. Each $S^n(X_n)$ is in $\class{X}$ by hypothesis and so $X$ is in $\class{X}$ too by the Eklof Lemma.

Next we prove (4). Here we note that any bounded below exact complex $(X,d)$ can be expressed as an inverse transfinite extension as indicated in the diagram:
$$\begin{CD}
             @.             @.      0  @<<< Z_3X  @<<< \cdots \\
  @.         @.                    @VVV         @VVV      \\
             @.        0     @<<<    Z_2X    @<<d< X_3  @= \cdots \\
  @.         @VVV                    @VVV         @VdVV      \\
  0           @<<<  Z_1X           @<<d<    X_2    @= X_2  @= \cdots \\
  @VVV         @VVV                    @VdVV         @VdVV      \\
  X_0 @<<d< X_1            @=  X_1           @=    X_1    @= \cdots   \\
  @|         @VdVV                    @VdVV         @VdVV      \\
  X_0 @= X_0            @=  X_0           @=    X_0    @= \cdots   \\
  @VVV         @VVV                    @VVV         @VVV      \\
  0   @.       0              @.        0              @.        0        @. \\
\end{CD}$$ Indeed note that each horizontal map in the diagram is surjective with its kernel being a disk $D^{n+1}(Z_nX)$. So $X$ is an inverse transfinite extension of the disks $D^{n+1}(Z_nX)$. The desired result now follows from~\cite[Lemma~2.3]{trlifaj-Ext and inverse limits} which is the dual of the Eklof Lemma. The proof of~\cite[Lemma~2.3]{trlifaj-Ext and inverse limits} is given for the category of modules over a ring, but the proof holds in any Grothendieck category with a projective generator.
\end{proof}

\subsection{The modified Hom and Tensor complexes}\label{subsec-modified hom and tensor} Here we focus in particular on $\ch$, the category of chain complexes of $R$-modules. The above $\homcomplex$ of Section~\ref{subsec-complexes} is often referred to as the \emph{internal hom}, for in the case that $R$ is commutative, $\homcomplex(X,Y)$ is again an object of $\ch$. Note that the cycles in degree 0 of the internal hom coincide with the \emph{external hom} functor: $Z_0[\homcomplex(X,Y)] \cong \Hom_{\ch}(X,Y)$. This idea can be used to define an alternate internal hom as was done in~\cite{enochs-garcia-rozas} and~\cite{garcia-rozas}. (This is the hom that corresponds to the graded hom in the category of graded $R[x]/(x^2)$-modules, where $R[x]/(x^2)$ is thought of as a graded ring with a copy of $R$ in degrees 0 and $-1$, and putting $x$ in degree $-1$.)
To define it for a given pair $X, Y \in \ch$, we let $\overline{\homcomplex}(X,Y)$ to be the complex $$\overline{\homcomplex}(X,Y)_n = Z_n\homcomplex(X,Y)$$ with differential $$\lambda_n : \overline{\homcomplex}(X,Y)_n \xrightarrow{} \overline{\homcomplex}(X,Y)_{n-1}$$ defined by $(\lambda f)_k = (-1)^nd_{k+n}f_k$. Notice that the degree $n$ component of $\overline{\homcomplex}(X,Y)$ is exactly $\Hom_{\ch}(X,\Sigma^{-n}Y)$. In this way we get an internal hom $\overline{\homcomplex}$ which is useful for categorical considerations in $\ch$. For example, $\overline{\homcomplex}(X,-)$ is a left exact functor, and is exact if and only if $X$ is projective in the category $\ch$. On the other hand, $\overline{\homcomplex}(-,Y)$ is exact if and only if $Y$ is injective in $\ch$. There are corresponding derived functors which we denote by $\overline{\mathit{Ext}}^i$. They satisfy that $\overline{\mathit{Ext}}^i(X,Y)$ is a complex whose degree $n$ is $\Ext^i_{\ch}(X,\Sigma^{-n}Y)$.

Similarly, the usual tensor product of chain complexes does not characterize categorical flatness. For this one needs the modified tensor product and its left derived torsion functor from~\cite{enochs-garcia-rozas} and~\cite{garcia-rozas}. We will denote it by $\overline{\otimes}$, and it is defined in terms of the usual tensor product $\otimes$ as follows. Given a complex $X$ of right $R$-modules and a complex $Y$ of left $R$-modules, we define $X \overline{\otimes} Y$ to be the complex whose $n^{\text{th}}$ entry is $(X \otimes Y)_n / B_n(X \otimes Y)$ with boundary map  $(X \otimes Y)_n / B_n(X \otimes Y) \rightarrow (X \otimes Y)_{n-1} / B_{n-1}(X \otimes Y)$ given by
\[
\overline{x \otimes y} \mapsto \overline{dx \otimes y}.
\]
This defines a complex and we get a bifunctor $ - \overline{\otimes} - $ which is right exact in each variable. We denote the corresponding left derived functors by $\overline{\Tor}_i$. We refer the reader to~\cite{garcia-rozas} for more details.

\subsection{Finitely chain complexes and projective chain complexes} A standard characterization of projective objects in $\ch$ is the following: A complex $P$ is \emph{projective} if and only if it is an exact complex with each cycle $Z_nP$ a projective $R$-module. We also recall that, by definition, a chain complex $X$ is \emph{finitely generated} if whenever $X = \Sigma_{i \in I} S_i$, for some collection $\{S_i\}_{i \in I}$ of subcomplexes of $X$, then there exists a finite subset $J \subseteq I$ for which $X = \Sigma_{i \in J} S_i$. It is a standard fact that $X$ is finitely generated if and only if it is bounded (above and below) and each $X_n$ is finitely generated. We say that a chain complex $X$ is of \textbf{type $\boldsymbol{FP_{\infty}}$} if it has a projective resolution by finitely generated projective complexes. Certainly any such $X$ is finitely presented and hence finitely generated.
Recall that by definition a chain complex $X$ is \emph{finitely presented} if $\Hom_{\ch}(X,-)$ preserves direct limits; $X$ is finitely presented if and only if it is bounded and each $X_n$ is a finitely presented $R$-module.

\subsection{Absolutely clean and level complexes; character duality}\label{subsec-character duality}
The so-called level and absolutely clean $R$-modules were introduced in~\cite{bravo-gillespie-hovey} as generalizations of flat modules over coherent rings and injective modules over Noetherian rings. The same notions in the category $\ch$ were also studied in~\cite{bravo-gillespie}. Here we recall some definitions and results from~\cite{bravo-gillespie} that will be used in the present paper.

\begin{definition}
We call a chain complex $A$ \emph{absolutely clean} if $\Ext^1_{\ch}(X,A)=0$ for all chain complexes $X$ of type $FP_{\infty}$. Equivalently, if $\overline{\mathit{Ext}}^1(X,A) = 0$ for all complexes $X$ of type $FP_{\infty}$. On the other hand, we call a chain complex $L$ \emph{level} if $\overline{\Tor}_1(X,L) = 0$ for all chain complexes $X$ of right $R$-modules of type $FP_{\infty}$.
\end{definition}

For the reader's convenience we now list some properties of the absolutely clean and level complexes.

\begin{proposition}\cite[Propositions 2.6 and 4.6]{bravo-gillespie}\label{prop-level chain complexes}
A chain complex $A$ is absolutely clean if and only if $A$ is exact and each $Z_nA$ is an absolutely clean $R$-module.
A chain complex $L$ is level if and only if $L$ is exact and each $Z_nL$ is a level $R$-module.
\end{proposition}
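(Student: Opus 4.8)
My plan is to reduce both equivalences to the module-level theory of absolutely clean and level modules developed in \cite{bravo-gillespie-hovey}, transferring information between $\ch$ and $\rmod$ along spheres and disks. Two preliminary facts about complexes of type $FP_\infty$ are needed. First, if $M$ is an $R$-module of type $FP_\infty$ then $S^n(M)$ and $D^n(M)$ are complexes of type $FP_\infty$: taking a resolution $F_\bullet\to M$ by finitely generated projective modules, $D^n(F_\bullet)\to D^n(M)$ resolves $D^n(M)$ by finitely generated projective complexes, and the tensor product of $\cdots\to D^{n-1}(R)\to D^n(R)\to S^n(R)\to 0$ with $F_\bullet$ resolves $S^n(R)\otimes_R M=S^n(M)$ by finite sums of the finitely generated projective complexes $D^k(F_q)$. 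Second, a complex $X$ of type $FP_\infty$ is bounded with each component $X_k$ of type $FP_\infty$ — apply the exact evaluation functor to a resolution of $X$ by finitely generated projective complexes, using that components of a finitely generated projective complex are finitely generated projective modules — hence $X$ is a finite iterated extension of the spheres $S^k(X_k)$ through its brutal truncations. Since $\Ext^1_{\ch}(-,A)$ and $\overline{\Tor}^{\ch}_1(-,L)$ fit into long exact sequences (so a vanishing at both ends of a short exact sequence forces a vanishing in the middle), this yields the reformulations: $A$ is absolutely clean iff $\Ext^1_{\ch}(S^n(M),A)=0$ for every module $M$ of type $FP_\infty$ and every $n$; and dually, a complex $L$ of left modules is level iff $\overline{\Tor}^{\ch}_1(S^n(N),L)=0$ for every right module $N$ of type $FP_\infty$ and every $n$.

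The heart of the argument is the comparison lemma: \emph{if $A$ is an exact complex and $M$ any $R$-module, then $\Ext^i_{\ch}(S^n(M),A)\cong\Ext^i_R(M,Z_nA)$ for all $i\ge 0$} (only $i\le 1$ will be used). To prove it, take a projective resolution $F_\bullet\to M$ in $\rmod$; the tensor product of $\cdots\to D^{n-1}(R)\to D^n(R)\to S^n(R)\to 0$ with $F_\bullet$ is a projective resolution of $S^n(M)$ in $\ch$, with $p$-th term the finite sum $\bigoplus_{p'+q=p}D^{n-p'}(F_q)$ of projective complexes. Applying $\Hom_{\ch}(-,A)$ and the adjunction isomorphism $\Hom_{\ch}(D^k(N),C)\cong\Hom_R(N,C_k)$ identifies the resulting cochain complex with the totalization of the first-quadrant double complex $\Hom_R(F_q,A_{n-p'})$, whose differential in the ``disk'' direction $p'$ is post-composition with the differential of $A$. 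Since each $F_q$ is projective, $\Hom_R(F_q,-)$ preserves homology, so taking homology first in the $p'$-direction leaves $\Hom_R(F_q,Z_nA)$ in degree $0$ and $\Hom_R(F_q,H_{n-p'}A)$ in degree $p'\ge 1$; exactness of $A$ kills the latter, the spectral sequence collapses onto the single surviving row, and $\Ext^i_{\ch}(S^n(M),A)\cong H^i(\Hom_R(F_\bullet,Z_nA))=\Ext^i_R(M,Z_nA)$. The level case needs the mirror statement: \emph{if $L$ is exact and $N$ any right $R$-module, then $\overline{\Tor}^{\ch}_i(S^n(N),L)\cong\Tor^R_i(N,Z_nL)$}; its proof is the same, resolving $N$ by projective right modules, applying $-\overline{\otimes}L$ in place of $\Hom_{\ch}(-,A)$, and using the identities $D^k(R)\overline{\otimes}L\cong\Sigma^k L$ and $D^k(F_q)\overline{\otimes}L\cong\Sigma^k(F_q\otimes_R L)$ (cf.\ \cite{garcia-rozas}), with flatness of the $F_q$ and exactness of $L$ again forcing the collapse.

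Granting these, the proposition follows quickly. If $A$ is absolutely clean, then $\Ext^1_{\ch}(S^n(R),A)=0$ because $S^n(R)$ is of type $FP_\infty$; computing this group from the projective resolution $\cdots\to D^{n-1}(R)\to D^n(R)\to S^n(R)\to 0$ gives $\Ext^1_{\ch}(S^n(R),A)\cong H_{n-1}A$, so $A$ is exact, and then the comparison lemma yields $\Ext^1_R(M,Z_nA)\cong\Ext^1_{\ch}(S^n(M),A)=0$ for every $M$ of type $FP_\infty$ (recall $S^n(M)$ is of type $FP_\infty$), i.e.\ each $Z_nA$ is absolutely clean. Conversely, if $A$ is exact with each $Z_nA$ absolutely clean, the comparison lemma gives $\Ext^1_{\ch}(S^n(M),A)=0$ for all $M$ of type $FP_\infty$, whence $A$ is absolutely clean by the reformulation above. (In the bounded-below case this converse also follows from Lemma~\ref{lemma-transfinite extensions of spheres and disks}(4), since the disks $D^n(C)$ on absolutely clean modules $C$ are absolutely clean complexes — because $\Ext^1_{\ch}(X,D^n(C))\cong\Ext^1_R(X_{n-1},C)$ — and a general $A$ is the union of its bounded-below exact subcomplexes $\cdots\to A_{n+1}\to A_n\to Z_{n-1}A\to 0$, absolutely clean complexes being closed under such direct limits.) The level statement follows verbatim using $\overline{\Tor}$, right modules, and level modules in place of $\Ext$, $R$-modules, and absolutely clean modules; alternatively it can be deduced from the absolutely clean case via character duality, since $L$ is level iff $L^{+}$ is absolutely clean (for both complexes and modules) and $Z_n(L^{+})\cong(Z_{-n-1}L)^{+}$ when $L$ is exact.

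I expect the one genuine obstacle to be the $\overline{\Tor}$ form of the comparison lemma. Unlike $\Hom_{\ch}$ and $\otimes$, the modified functors $\overline{\homcomplex}$ and $\overline{\otimes}$ are built by first passing to cycles and cokernels, so one must verify carefully that $D^k(R)\overline{\otimes}L$ and $D^k(F_q)\overline{\otimes}L$ take the asserted form, keep track of the degree shifts and signs this introduces, and confirm that $-\overline{\otimes}L$ applied to the above projective resolution of $S^n(N)$ really does compute $\overline{\Tor}^{\ch}_\ast(S^n(N),L)$. Everything else is bookkeeping with spheres, disks, and a first-quadrant spectral sequence that collapses because $A$ (resp.\ $L$) is exact.
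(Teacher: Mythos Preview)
The paper does not prove this proposition; it is quoted verbatim from \cite[Propositions~2.6 and~4.6]{bravo-gillespie} as a preliminary result, so there is no ``paper's own proof'' to compare against. What you have written is therefore a self-contained argument for a result the present paper imports.

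Your argument is correct. The reduction to spheres is the right move: complexes of type $FP_\infty$ are bounded with $FP_\infty$ components, hence are finite iterated extensions of spheres $S^k(X_k)$ on $FP_\infty$ modules, and conversely $S^n(M)$ is of type $FP_\infty$ when $M$ is. The comparison isomorphism $\Ext^i_{\ch}(S^n(M),A)\cong\Ext^i_R(M,Z_nA)$ for exact $A$ is proved exactly as you say: the total-complex resolution of $S^n(M)$ built from the disk resolution of $S^n(R)$ and a projective resolution of $M$ yields, after applying $\Hom_{\ch}(-,A)$ and the adjunction $\Hom_{\ch}(D^k(N),A)\cong\Hom_R(N,A_k)$, a first-quadrant double complex whose $p'$-columns are (brutal truncations of) $\Hom_R(F_q,A)$; projectivity of $F_q$ and exactness of $A$ collapse the column spectral sequence onto $\Hom_R(F_\bullet,Z_nA)$. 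The exactness step $\Ext^1_{\ch}(S^n(R),A)\cong H_{n-1}A$ is also right.

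For the level half, the identity you need is indeed $D^k(P)\,\overline{\otimes}\,L\cong \Sigma^{k}(P\otimes_R L)$ for $P$ projective (in particular $D^k(R)\,\overline{\otimes}\,L\cong\Sigma^k L$); this is in \cite{garcia-rozas} and can also be verified by hand from the definition of $\overline{\otimes}$. With that in place your $\overline{\Tor}$ argument mirrors the $\Ext$ one. Your character-duality shortcut is also legitimate provided you invoke the complex-level duality $\overline{\Tor}_1(X,L)=0\Leftrightarrow\overline{\Ext}^1(X,L^+)=0$ directly (via the standard Hom--tensor isomorphism for $\overline{\otimes}$ and $\overline{\homcomplex}$) rather than citing Proposition~\ref{cor-duality} of the present paper, which is logically downstream of the statement you are proving.
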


Recall that the character module of $M$ is defined as $M^+ = \Hom_{\Z}
(M, \Q )$, and that $M^+$ is a right (resp. left) $R$-module whenever $M$ is a left (resp. right) $R$-module.  The construction extends to chain complexes: Given a chain complex $X$, we have $X^+ = \Hom_{\Z}(X,\Q)$. Since $\Q$ is an injective cogenerator for the category of abelian groups, the functor $\Hom_{\Z}(-,\Q)$ preserves and reflects exactness. So Proposition~\ref{prop-level chain complexes} immediately gives us the following corollary due to the perfect character module duality between absolutely clean and level modules~\cite[Theorem~2.10]{bravo-gillespie-hovey}.

\begin{proposition}\cite[Corollary 4.7]{bravo-gillespie}\label{cor-duality}
A chain complex $L$ of left (resp. right) modules is level if and only if $L^+ = \Hom_{\Z}(L,\Q)$ is an absolutely clean complex of right (resp. left) modules. And, a chain complex $A$ of left (resp. right) modules is absolutely clean if and only if $A^+ = \Hom_{\Z}(A,\Q)$ is a level complex of right (resp. left) modules.
\end{proposition}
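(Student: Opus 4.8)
The plan is to lift the module-level character duality of~\cite[Theorem~2.10]{bravo-gillespie-hovey} to chain complexes by means of Proposition~\ref{prop-level chain complexes}, which rephrases each complex-level property as exactness of the complex together with a condition on its cycle modules. By symmetry it suffices to prove the first assertion: $L$ is level if and only if $L^+$ is absolutely clean. The second is the same argument with the two classes interchanged, and the left/right bookkeeping is automatic since $(-)^+$ already swaps sides at the module level.

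Two elementary observations drive the reduction. First, since $\Q$ is an injective cogenerator of abelian groups, $\Hom_{\Z}(-,\Q)$ preserves and reflects exactness; hence $L$ is an exact complex precisely when $L^+$ is. Second, I would identify the cycle modules of $L^+$ with the character modules of the cycle modules of $L$. Componentwise $(L^+)_n \cong (L_{-n})^+$, and when $L$ is exact the short exact sequences $0 \to Z_nL \to L_n \to Z_{n-1}L \to 0$ dualize, by exactness of $(-)^+$, to short exact sequences $0 \to (Z_{n-1}L)^+ \to (L_n)^+ \to (Z_nL)^+ \to 0$; tracking these through the differential of the Hom-complex from Section~\ref{subsec-complexes} yields a natural isomorphism $Z_{-n}(L^+) \cong (Z_{n-1}L)^+$.

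Granting these, the equivalence follows from a string of biconditionals. By Proposition~\ref{prop-level chain complexes}, $L$ is level exactly when $L$ is exact and every $Z_nL$ is a level $R$-module. By the first observation and~\cite[Theorem~2.10]{bravo-gillespie-hovey}, this holds exactly when $L^+$ is exact and every $(Z_nL)^+$ is an absolutely clean $R$-module. By the cycle identification, this is exactly the condition that $L^+$ is exact with all cycle modules absolutely clean, which, by Proposition~\ref{prop-level chain complexes} applied to absolutely clean complexes, says precisely that $L^+$ is an absolutely clean complex.

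The only point needing care is the cycle identification $Z_{-n}(L^+) \cong (Z_{n-1}L)^+$, where one must mind the reindexing and the signs in the differential of the Hom-complex. This can also be sidestepped: exactness of $\Hom_{\Z}(-,\Q)$ already forces it to carry cycles of an exact complex to cycles of the dual complex, up to reindexing, after which Proposition~\ref{prop-level chain complexes} and the module duality apply directly. Everything else in the argument is formal, which is why the statement is labelled a corollary of Proposition~\ref{prop-level chain complexes}.
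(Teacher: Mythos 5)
Your proposal is correct and matches the paper's own (brief) justification: the paper explicitly notes, just before stating the proposition, that $\Hom_{\Z}(-,\Q)$ preserves and reflects exactness and that the result then follows "immediately" from Proposition~\ref{prop-level chain complexes} together with the module-level duality of \cite[Theorem~2.10]{bravo-gillespie-hovey}. Your expansion, including the cycle identification $Z_{-n}(L^+)\cong (Z_{n-1}L)^+$ for exact $L$, is exactly the detail that makes this immediate.
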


The notion of duality pair used in~\cite{bravo-gillespie-hovey} was extended to chain complexes in~\cite{gillespie-ding-modules}. We recall the definition: Suppose $\cat{C}$ is a collection of chain complexes of right $R$-modules, and $\cat{D}$ is a collection of chain complexes of left $R$-modules, we say that $(\cat{C},\cat{D})$ is a \emph{duality pair} if $X \in \cat{C}$ if
and only if ${X}^{+} \in  \cat{D}$, and $Y \in \cat{D}$ if and
only if $Y^{+} \in \cat{C}$.  It is immediate from Corollary~\ref{cor-duality} that the absolutely clean and level complexes give rise to two duality pairs. One where $\class{C}$ is the class of all absolutely clean complexes of right $R$-modules, and another where $\class{C}$ is the class of all level complexes of right $R$-modules.

\begin{proposition}\cite[Theorem 5.9]{gillespie-ding-modules}\label{prop-dual-exact}
Suppose $(\cat{C},\cat{D})$ is a duality pair in $\ch$
such that $\cat{D}$ is closed under pure quotients.  Let $\mathbb{C}$ be a chain 
complex of projective chain complexes.  Then $X \overline{\otimes} \mathbb{C}$ is exact for all $X \in
\cat{C}$ if and only if $\overline{\homcomplex}(\mathbb{C},Y)$ is exact for all $Y \in
\cat{D}$.  In particular,  $A \overline{\otimes} \mathbb{C}$ is exact for all absolutely clean complexes $A$ if and only if $\overline{\homcomplex}(\mathbb{C},L)$ is exact for all level complexes $L$.
\end{proposition}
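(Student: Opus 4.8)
The plan is to push everything through character-module duality. The linchpin is a natural isomorphism of complexes of complexes
\[
  (X \overline{\otimes} \mathbb{C})^{+} \;\cong\; \overline{\homcomplex}(\mathbb{C}, X^{+})
\]
for every complex $X$ of right $R$-modules and every chain complex of projective complexes $\mathbb{C}$. This is simply the Hom--tensor adjunction $\Hom_{\Z}(-\otimes_{R}-,\Q)\cong\Hom_{R}(-,\Hom_{\Z}(-,\Q))$ transported through the identities of~\cite{garcia-rozas} that relate the modified functors $\overline{\otimes}$ and $\overline{\homcomplex}$ to the ordinary $\otimes$ and $\homcomplex$; the hypothesis that the terms of $\mathbb{C}$ are projective complexes is exactly what lets those identities interact cleanly with $(-)^{+}=\Hom_{\Z}(-,\Q)$. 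Since $\Q$ is an injective cogenerator of the category of abelian groups, $\Hom_{\Z}(-,\Q)$ preserves and reflects exactness. Granting the displayed isomorphism, one implication is instant: if $\overline{\homcomplex}(\mathbb{C},Y)$ is exact for every $Y\in\cat{D}$, then for $X\in\cat{C}$ we have $X^{+}\in\cat{D}$ because $(\cat{C},\cat{D})$ is a duality pair, so $\overline{\homcomplex}(\mathbb{C},X^{+})\cong(X\overline{\otimes}\mathbb{C})^{+}$ is exact and hence $X\overline{\otimes}\mathbb{C}$ is exact.

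For the converse, fix $Y\in\cat{D}$ and assume $X\overline{\otimes}\mathbb{C}$ is exact for all $X\in\cat{C}$. Since $(\cat{C},\cat{D})$ is a duality pair, $Y^{+}\in\cat{C}$, so $Y^{+}\overline{\otimes}\mathbb{C}$ is exact and therefore $\overline{\homcomplex}(\mathbb{C},Y^{++})\cong(Y^{+}\overline{\otimes}\mathbb{C})^{+}$ is exact. Now feed the canonical pure exact sequence $0\to Y\to Y^{++}\to Y^{++}/Y\to 0$ into $\overline{\homcomplex}(\mathbb{C},-)$. As $\mathbb{C}$ is a complex of projective complexes, $\overline{\homcomplex}(\mathbb{C},-)$ is exact, so the result is a short exact sequence of complexes of complexes with exact middle term, and the long exact homology sequence reduces us to showing $\overline{\homcomplex}(\mathbb{C},Y^{++}/Y)$ is exact. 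But $Y^{++}/Y$ is a pure quotient of $Y^{++}\in\cat{D}$, hence again lies in $\cat{D}$; iterating, we obtain an exact coresolution $0\to Y\to Y^{++}\to (Y^{++}/Y)^{++}\to\cdots$ of $Y$ by complexes of the form $Z^{++}$ with $Z\in\cat{D}$, each of which has exact $\overline{\homcomplex}(\mathbb{C},-)$.

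The step I expect to be the real obstacle is precisely the passage from this coresolution back to the exactness of $\overline{\homcomplex}(\mathbb{C},Y)$: an exact coresolution by objects on which $\overline{\homcomplex}(\mathbb{C},-)$ is acyclic does not by itself force $\overline{\homcomplex}(\mathbb{C},Y)$ acyclic, because of an $\invlim^{1}$-type obstruction to convergence of the attendant spectral sequence (one can write down an exact coresolution of a non-acyclic complex by contractible ones). To defeat this one uses that character duals are pure-injective: applying $(-)^{+}$ twice to the pure exact sequence $0\to Y\to Y^{++}\to Y^{++}/Y\to 0$ turns it into a \emph{split} exact sequence $0\to Y^{++}\to Y^{++++}\to (Y^{++}/Y)^{++}\to 0$. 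Combining such splittings with the closure of the class $\class{E}=\{\,Z : \overline{\homcomplex}(\mathbb{C},Z)\ \text{exact}\,\}$ under direct summands, extensions, the two-out-of-three property on short exact sequences, and arbitrary products --- all immediate from $\overline{\homcomplex}(\mathbb{C},-)$ being exact and sending products to products --- one shows that the cosyzygies of $Y$ are themselves built from complexes of the form $Z^{++}$ ($Z\in\cat{D}$) by finitely many of these operations, whence $Y\in\class{E}$. This is the complex-of-complexes incarnation of the module-level argument of~\cite{bravo-gillespie-hovey}, and the closure of $\cat{D}$ under pure submodules, pure quotients, and direct summands that it invokes is automatic for a duality pair. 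Finally, the displayed special case follows by taking $(\cat{C},\cat{D})$ to be the duality pair of absolutely clean complexes of right $R$-modules and level complexes of left $R$-modules furnished by Corollary~\ref{cor-duality}, using that the class of level complexes is closed under pure quotients.
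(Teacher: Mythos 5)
This proposition is cited from~\cite[Theorem~5.9]{gillespie-ding-modules}; the paper at hand supplies no proof, so I cannot compare your argument against a paper-internal one, but I can assess it on its own terms.

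Your opening identity $(X \overline{\otimes} \mathbb{C})^{+} \cong \overline{\homcomplex}(\mathbb{C}, X^{+})$ is genuine, and your derivation of the ``hom-exact implies tensor-exact'' direction from it, together with the duality pair axiom $X \in \cat{C} \Leftrightarrow X^{+} \in \cat{D}$ and the fact that $(-)^{+}$ reflects exactness, is correct and complete.

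The converse direction is where the gap is, and you largely diagnose it yourself. Reducing to $\overline{\homcomplex}(\mathbb{C}, Y^{++}/Y)$ and iterating does produce an exact coresolution of $Y$ by complexes $Z$ with $\overline{\homcomplex}(\mathbb{C},Z)$ exact, and you correctly observe that this alone does not force $\overline{\homcomplex}(\mathbb{C}, Y)$ to be exact. But the fix you propose does not close the gap. The split sequence $0\to Y^{++}\to Y^{++++}\to (Y^{++}/Y)^{++}\to 0$ only tells you that $(Y^{++}/Y)^{++}$ is a direct summand of $Y^{++++}$ and hence belongs to $\class{E}$ --- which you already knew directly: since $\cat{D}$ is closed under pure quotients, $Y^{++}/Y \in \cat{D}$, so $(Y^{++}/Y)^{+}\in\cat{C}$, so $(Y^{++}/Y)^{+}\overline{\otimes}\mathbb{C}$ is exact, so $(Y^{++}/Y)^{++}\in\class{E}$. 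What you actually need, and what this splitting does not deliver, is that the first cosyzygy $Y^{++}/Y$ \emph{itself} lies in $\class{E}$. The only short exact sequence you have relating $Y^{++}/Y$ to objects you control is $0\to Y\to Y^{++}\to Y^{++}/Y\to 0$, and invoking two-out-of-three there requires already knowing $Y\in\class{E}$ --- the very conclusion you are after. Neither direct summands, nor extensions, nor products manufacture $Y^{++}/Y$ from objects of the form $Z^{++}$. So the final sentence, asserting that ``the cosyzygies of $Y$ are themselves built from complexes of the form $Z^{++}$ by finitely many of these operations, whence $Y\in\class{E}$,'' is circular rather than a proof: the descent never terminates, and the passage you flag as ``the real obstacle'' remains open. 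To complete this direction you would need a genuinely new ingredient beyond what is written here.
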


The classes of absolutely clean and level complexes each possess a long list of nice homological properties. For example, each is closed under direct products, direct sums, direct summands, direct limits, transfinite extensions, pure submodules and pure quotients. Moreover, the level complexes form a resolving class while the absolutely clean complexes form a coresolving class; see~\cite[Propositions~2.7 and 4.8]{bravo-gillespie}. One of the most important properties for our purposes is listed in the following proposition.

\begin{proposition}\cite[Corollaries 2.11 and 4.9]{bravo-gillespie}\label{prop-level complexes are transfinite extensions}
There exists a cardinal $\kappa$ such that every absolutely clean (resp. level) chain complex is a transfinite extension of absolutely clean (resp. level) complexes with cardinality bounded by $\kappa$. In particular, there is a set $\class{S}$ of absolutely clean (resp. level) complexes for which every absolutely clean (resp. level) complex is a transfinite extension of ones in $\class{S}$.
\end{proposition}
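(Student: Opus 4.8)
The plan is to verify a L\"owenheim--Skolem condition for the class $\class{C}$ of absolutely clean (resp.\ level) complexes and then feed it into the standard transfinite induction. First I would fix a cardinal $\kappa \geq |R| + \aleph_0$ and invoke the purification principle in $\ch$: since $\ch$ is a locally finitely presented Grothendieck category --- its finitely presented objects being the bounded complexes of finitely presented modules --- every subcomplex $S$ of a complex $X$ with $|S| \leq \kappa$ is contained in a \emph{pure} subcomplex $Y \subseteq X$ with $|Y| \leq \kappa$. One builds $Y$ by a countable alternation: at each stage one adjoins to the subcomplex gathered so far the solutions, taken inside $X$, of the finitely many systems of equations needed to witness its purity; only boundedly many elements and degrees enter at each of the countably many stages, so $|Y| \leq \kappa$. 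I would record this (or a citation for it) together with the standard fact that purity is transitive in towers: if $A \subseteq B \subseteq C$ with $A$ pure in $C$ and $B/A$ pure in $C/A$, then $B$ is pure in $C$.

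The next observation is that the argument uses only three of the closure properties of $\class{C}$ recorded above, namely closure under pure subcomplexes, under pure quotients, and under direct limits; in particular, whenever $Y$ is a pure subcomplex of some $X \in \class{C}$, both $Y$ and $X/Y$ lie in $\class{C}$. Then, given $X \in \class{C}$, I would well-order a generating set $\{x_\alpha : \alpha < \lambda\}$ and construct a continuous chain $0 = X_0 \subseteq X_1 \subseteq \cdots \subseteq X_\lambda = X$ of pure subcomplexes of $X$, as follows. At a successor step, $X_\alpha$ is pure in $X$, so $X/X_\alpha \in \class{C}$; applying the purification principle inside $X/X_\alpha$ to the subcomplex generated by the image of $x_\alpha$ produces a pure subcomplex $Q \subseteq X/X_\alpha$ of cardinality $\leq \kappa$ containing that image, and I take $X_{\alpha+1}$ to be the preimage of $Q$ in $X$. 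Transitivity of purity in towers makes $X_{\alpha+1}$ pure in $X$, while $X_{\alpha+1}/X_\alpha \cong Q$ lies in $\class{C}$ and has cardinality $\leq \kappa$. At a limit step I take the union, using that a directed union of pure subcomplexes is pure and that $\class{C}$ is closed under direct limits. Since the chain is continuous and $x_\alpha \in X_{\alpha+1}$ for every $\alpha$, it exhausts $X$.

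This exhibits $X$ as a transfinite extension of the cokernels $X_{\alpha+1}/X_\alpha$, each lying in $\class{C}$ with cardinality $\leq \kappa$, together with $X_0 = 0 \in \class{C}$; and since there is only a set of isomorphism types of complexes of cardinality $\leq \kappa$, choosing one representative in $\class{C}$ of each type yields the required set $\class{S}$. The absolutely clean and the level cases are covered by literally the same argument, as only the three closure properties above are invoked. (Alternatively one could try to descend from the module-level statement of~\cite{bravo-gillespie-hovey} using a Hill-lemma argument on the cycle submodules, but the direct route above is cleaner.) The one genuinely technical ingredient is the purification principle for $\ch$ together with the tower-transitivity of purity; once these are in hand, the rest is routine transfinite bookkeeping.
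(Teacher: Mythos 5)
Your proposal is correct. The paper itself does not prove this statement---it only cites \cite[Corollaries~2.11 and~4.9]{bravo-gillespie}---so there is no in-text argument to compare against, but your route is the standard deconstructibility argument one expects to underlie those corollaries: a purification (L\"owenheim--Skolem) lemma for the locally finitely presented Grothendieck category $\ch$, together with a continuous chain of pure subcomplexes whose successive quotients are $\kappa$-small pure subcomplexes of pure quotients of $X$; the tower-transitivity of purity keeps each $X_\alpha$ pure in $X$, and closure of the absolutely clean (resp.\ level) complexes under pure subcomplexes and pure quotients (explicitly recorded in the paper just before the proposition) places every successive quotient back in the class. One small bookkeeping remark: closure under direct limits is not actually invoked in your construction --- at limit stages you only need that a directed union of pure subcomplexes is again pure so that $X/X_\gamma$ remains a pure quotient --- so the argument really only uses closure under pure subobjects and pure quotients. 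The alternative you mention (descending from the module-level statement in \cite{bravo-gillespie-hovey} via the cycle characterization of Proposition~\ref{prop-level chain complexes}) would also work, but your direct argument is self-contained and is not a substantive departure from the cited proof.
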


\section{Projective model structures on double complexes}\label{sec-proj}

Since $\ch$ is an abelian category we can of course consider  $\textnormal{Ch}(\ch)$, the category of chain complexes of chain complexes. Using~\cite[Sign Trick~1.2.5]{weibel}, the category $\textnormal{Ch}(\ch)$ can be identified with the category of bicomplexes. However, for our purpose here it is easier to stick with the category $\textnormal{Ch}(\ch)$, and we will refer to an object in this category as either a \emph{double complex} or a \emph{complex of complexes}. Another way the reader may wish to think about this category is to first identify $\ch$ with the category $R[x]/(x^2)-$Mod, of graded $R[x]/(x^2)-$modules over the graded ring $R[x]/(x^2)$ (putting $x$ in degree $-1$). Then the category of double complexes we work with may be identified with $\textnormal{Ch}(R[x]/(x^2)-\textnormal{Mod})$, the category of chain complexes of graded $R[x]/(x^2)-$modules. The paper~\cite{gillespie-hovey-graded-gorenstein} has more details on this perspective for the interested reader. 

The purpose of this section is to prove the following theorem, which is a chain complex version of~\cite[Theorem~6.1]{bravo-gillespie-hovey}.

\begin{theorem}\label{thm-how to create projective on chain}
Given a ring $R$, let $A$ be a given chain complex of right $R$-modules.  Let
$\class{C}$ be the class of all $A$-acyclic complexes of projective complexes; that
is, chain complexes $\mathbb{C}$ with each $\mathbb{C}_n$ a projective chain complex and such that
$A \overline{\otimes} \mathbb{C}$ is exact.  Then there is a cofibrantly generated
abelian model structure on $\textnormal{Ch}(\ch)$ where every object is fibrant,
$\class{C}$ is the class of cofibrant objects, and $\class{W} =
\rightperp{\class{C}}$ is the class of trivial objects. In other words, $(\class{C}, \rightperp{\class{C}})$ is a projective cotorsion pair in $\textnormal{Ch}(\ch)$.
\end{theorem}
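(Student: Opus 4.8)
The plan is to verify the three hypotheses of Proposition~\ref{prop-how to create a projective model structure} for the class $\class{C}$ of $A$-acyclic complexes of projective complexes in $\textnormal{Ch}(\ch)$. Since the category of projective chain complexes is closed under the relevant operations and $A \overline{\otimes} -$ is exact and commutes with direct sums and direct limits (being a left derived-type construction built from the usual tensor product), condition (3) that $\class{P} \subseteq \class{W} = \rightperp{\class{C}}$ should follow from the observation that the categorically projective objects of $\textnormal{Ch}(\ch)$ are built from disks $D^n(P)$ on projective chain complexes $P$, and any such disk, being contractible, satisfies $A \overline{\otimes} D^n(P)$ exact and moreover is easily seen to lie in $\leftperp{(\cdot)}$ appropriately; one checks directly that $\Ext^1_{\textnormal{Ch}(\ch)}(\mathbb{C}, P) = 0$ for all $\mathbb{C} \in \class{C}$ and all projective $P$, since the projectives are direct summands of sums of disks and every short exact sequence ending in $\mathbb{C}$ with each $\mathbb{C}_n$ projective is degreewise split, so contractibility of the disks kills the extension. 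Condition (2), that $\class{W}$ is thick, is the easy part: $\rightperp{\class{C}}$ is always closed under direct summands and extensions, and closure under the relevant cokernels/kernels follows from $\class{C}$ being closed under suspension together with the long exact sequence in $\Ext$, using that $(\class{C},\class{W})$ is hereditary (which we get for free once we know it is a cotorsion pair with $\class{C}$ resolving).

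The substantive work, and the main obstacle, is condition (1): that $(\class{C}, \rightperp{\class{C}})$ is a complete cotorsion pair, and in fact cogenerated by a set so that the model structure is cofibrantly generated. The strategy here mirrors~\cite[Theorem~6.1]{bravo-gillespie-hovey}. First I would show $\class{C}$ is deconstructible, i.e.\ there is a set $\class{S} \subseteq \class{C}$ such that every object of $\class{C}$ is a transfinite extension of objects in $\class{S}$. For this one combines two facts: complexes of projective complexes are transfinite extensions of disks and spheres on projective chain complexes (a Kaplansky-style or filtration argument on a complex of projectives, using Lemma~\ref{lemma-transfinite extensions of spheres and disks}), and the $A$-acyclicity condition $A \overline{\otimes} \mathbb{C}$ exact is itself of a "local character" — by Proposition~\ref{prop-level complexes are transfinite extensions} and the duality-pair machinery of Proposition~\ref{prop-dual-exact}, the condition can be tested against a set, and a suitable cardinal bound $\kappa$ (depending on $R$, $A$, and $|\class{S}|$) controls the size of the pieces. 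A standard argument (as in the module case) then produces, for each $\mathbb{C} \in \class{C}$ and each subobject one wishes to "close off," a $\kappa$-bounded subcomplex in $\class{C}$ with quotient again in $\class{C}$, giving the desired filtration. Deconstructibility of $\class{C}$ then yields, by the Eklof–Trlifaj theorem, that $\leftperp{(\rightperp{\class{C}})} = \class{C}$ (so we genuinely have a cotorsion pair) and that the cotorsion pair is complete and cogenerated by the set $\class{S}$.

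With completeness in hand, I would circle back to confirm $\class{C}$ is thick/resolving — it contains the projectives, is closed under extensions and direct summands (clear from the definition, since exactness of $A \overline{\otimes} -$ is preserved by these, using that each $\mathbb{C}_n$ projective forces degreewise-split sequences), and is closed under kernels of epimorphisms between its members by the usual dimension-shifting with the long exact homology sequence of $A \overline{\otimes} -$ applied to a short exact sequence of complexes of projectives (which is automatically degreewise split, hence stays short exact after $\overline{\otimes} A$). This gives that the cotorsion pair is hereditary, which feeds back into the verification of (2). Finally, invoking Proposition~\ref{prop-how to create a projective model structure} packages the three conditions into the asserted projective cotorsion pair, and the set $\class{S}$ cogenerating it gives the cofibrant generation. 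The one technical point I expect to require care is the precise cardinal estimate ensuring that the "closing off" procedure stays inside $\class{C}$: one must track simultaneously the projectivity of components and the $A$-acyclicity, and the latter is not a pointwise condition on components but a statement about the total complex $A \overline{\otimes} \mathbb{C}$, so the estimate has to be robust enough to absorb the extra homological bookkeeping — this is exactly the step that makes the construction "more technical" than the injective analogue, as flagged in the introduction.
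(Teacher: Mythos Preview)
Your overall strategy matches the paper's: apply Proposition~\ref{prop-how to create a projective model structure} after establishing that $\class{C}$ is deconstructible via a Kaplansky-style filtration argument tracking both projectivity of the components and exactness of $A\overline{\otimes}-$, exactly as in~\cite[Section~7]{bravo-gillespie-hovey} lifted to double complexes. Two points deserve correction, though neither is fatal.

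First, the references to Propositions~\ref{prop-level complexes are transfinite extensions} and~\ref{prop-dual-exact} are misplaced. In Theorem~\ref{thm-how to create projective on chain} the complex $A$ is a \emph{single fixed} complex of right $R$-modules; there is no family of test objects to reduce to a set, and no duality-pair machinery enters. The cardinal $\kappa$ is chosen simply so that $\kappa > \max\{|R|,|A|,\omega\}$, and the filtration (the paper's Theorem~\ref{theorem-filtrations for complexes of projectives}, built from the Covering and Exact Covering Lemmas) alternates between closing up to a sub-double-complex of $\mathbb{P}$ and closing up to an exact subcomplex of $A\overline{\otimes}\mathbb{P}$. The absolutely clean/level material only appears later, in Section~\ref{sec-double-complexes}, when a \emph{particular} $A$ is chosen.

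Second, your thickness argument for $\class{W}$ is slightly circular, since you invoke hereditariness of the pair before the pair is established. The paper's route is cleaner and avoids this: because every $\mathbb{C}\in\class{C}$ has projective components, Lemma~\ref{lemma-homcomplex-basic-lemma} gives $\mathbb{X}\in\class{W}$ if and only if $\homcomplex(\mathbb{C},\mathbb{X})$ is acyclic for all $\mathbb{C}\in\class{C}$; applying $\homcomplex(\mathbb{C},-)$ to a short exact sequence preserves exactness (projectivity of the $\mathbb{C}_n$), and two-out-of-three for acyclicity gives thickness directly. The containment $\class{P}\subseteq\class{W}$ then follows immediately since projectives in $\textnormal{Ch}(\ch)$ are contractible.
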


To prove Theorem~\ref{thm-how to create projective on chain} we follow the sequence of lemmas from~\cite[Section~7]{bravo-gillespie-hovey}, extending  them to work for double complexes rather than just chain complexes. The proofs are essentially the same but we include the general versions here for clarity and convenience of the reader. Again, the key is to resist the temptation to work with bicomplexes and to note that the arguments readily adapt to working with double complexes (complexes of graded $R[x]/(x^2)$-modules). 

We start with a classic result of Kaplansky~\cite{kaplansky-projective} stating that every projective module is a direct sum of countably generated
projective modules. It follows that the same result holds for a projective chain complex too, which we explain in the following lemma. 

\begin{lemma}[Kaplansky]\label{lemma-Kaplansky for chain complexes}
The following are equivalent for a chain complex $P$. 
\begin{enumerate}
\item $P$ is projective in $\ch$.
\item $P$ is a direct sum of countably generated projective complexes.
\item $P \cong \oplus_{i \in I} D^{n_i}(P_i)$ for some countably generated projective $R$-modules $P_i$.
\end{enumerate}
\end{lemma}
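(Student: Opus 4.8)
\textbf{Proof strategy for Lemma~\ref{lemma-Kaplansky for chain complexes}.}
The plan is to prove the cycle of implications $(3) \Rightarrow (2) \Rightarrow (1) \Rightarrow (3)$, reducing everything to the classical structure theory of projective complexes together with Kaplansky's theorem for modules. The implication $(3) \Rightarrow (2)$ is essentially immediate: each disk $D^{n_i}(P_i)$ with $P_i$ countably generated is itself a countably generated complex (it has $P_i$ in two adjacent degrees and zero elsewhere), and it is projective in $\ch$ since disks $D^n(Q)$ on projective modules $Q$ are projective complexes — indeed $\Hom_{\ch}(D^n(Q),-) \cong \Hom_R(Q,(-)_n)$ is exact. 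So a complex as in $(3)$ is visibly a direct sum of countably generated projective complexes. For $(2) \Rightarrow (1)$, one just notes that an arbitrary direct sum of projective objects in any category with coproducts is projective, so any complex satisfying $(2)$ is projective in $\ch$.

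The substantive implication is $(1) \Rightarrow (3)$. Here I would start from the standard characterization recalled in the excerpt (Section~\ref{subsec-complexes}, and the ``finitely generated and projective'' subsection): a complex $P$ is projective in $\ch$ if and only if it is exact with each cycle module $Z_nP$ projective. Given such a $P$, the short exact sequences $0 \to Z_nP \to P_n \to Z_{n-1}P \to 0$ split because $Z_{n-1}P$ is projective, which yields a (non-canonical) direct sum decomposition $P_n \cong Z_nP \oplus Z_{n-1}P$ compatible with the differentials, so that $P \cong \bigoplus_{n} D^n(Z_{n-1}P)$ as complexes (this is the familiar fact that an exact complex with projective, hence split, cycles is a direct sum of disks). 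Now apply Kaplansky's theorem~\cite{kaplansky-projective} to each projective module $Z_{n-1}P$, writing it as a direct sum $\bigoplus_{j} Q_{n,j}$ of countably generated projective modules. Since $D^n(\bigoplus_j Q_{n,j}) \cong \bigoplus_j D^n(Q_{n,j})$, combining over all $n$ gives the desired decomposition $P \cong \bigoplus_{i \in I} D^{n_i}(P_i)$ with each $P_i$ countably generated projective, which is exactly $(3)$.

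The main obstacle — though it is more of a bookkeeping point than a genuine difficulty — is making sure the splittings of the sequences $0 \to Z_nP \to P_n \to Z_{n-1}P \to 0$ assemble into an isomorphism of \emph{complexes}, i.e.\ that one can choose the splittings coherently so the differential of $P$ is carried to the direct sum of the differentials of the disks. This is standard: one checks that for an exact complex, fixing splittings $s_n : Z_{n-1}P \to P_n$ of the surjections $P_n \twoheadrightarrow Z_{n-1}P$ determines an isomorphism $P \xrightarrow{\sim} \bigoplus_n D^n(Z_{n-1}P)$ directly, with no compatibility conditions needed between the $s_n$ for different $n$, precisely because the disks $D^n(Z_{n-1}P)$ are ``spread out'' across degrees $n$ and $n-1$ and the contribution of $d_n : P_n \to P_{n-1}$ factors as $P_n \twoheadrightarrow Z_{n-1}P \hookrightarrow P_{n-1}$. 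Everything else is a direct appeal to Kaplansky and to the exactness of $\Hom_{\ch}(D^n(Q),-)$.
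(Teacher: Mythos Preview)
Your proposal is correct and follows essentially the same route as the paper's proof: both establish the easy implications $(3)\Rightarrow(2)\Rightarrow(1)$ directly, and for $(1)\Rightarrow(3)$ both decompose a projective complex as $\bigoplus_n D^n(P_n)$ with $P_n$ projective and then apply Kaplansky's theorem to each $P_n$. The only difference is expository: the paper simply cites the disk decomposition as ``well known,'' whereas you spell out how it arises from splitting the short exact sequences $0 \to Z_nP \to P_n \to Z_{n-1}P \to 0$.
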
 

\begin{proof}
We note that any chain complex $X$ is countably generated if and only if each $X_n$ is countably generated (for example, see~\cite[Lemma~4.10]{gillespie-quasi-coherent}, taking $\kappa = \aleph_1$).
The implications $(3) \implies (2) \implies (1)$ are clear. For $(1) \implies (3)$, it is well known that a projective complex is isomorphic to a direct sum $\oplus_{n \in \Z} D^n(P_n)$ where each $P_n$ is some projective $R$-module. But the classic result of Kaplansky~\cite{kaplansky-projective} tells us that each projective $P_n$ is in turn a direct sum of countably generated projectives. So (3) is clear too. 
\end{proof} 

\begin{definition}\label{def-cardinality}
We define the \textbf{cardinality} of a chain complex $X$ of $R$-modules to be $|\coprod_{n \in \Z} X_n|$. The cardinality of a double chain complex $\mathbb{X} \in \textnormal{Ch}(\ch)$ is defined similarly. 
\end{definition}

\begin{lemma}[Covering Lemma for double complexes]\label{lemma-covering lemma}
Let $\kappa$ be an infinite cardinal and suppose $\mathbb{X}$ is a nonzero
double complex in which each $\mathbb{X}_n$ has a direct sum decomposition $\mathbb{X}_n =
\oplus_{i \in I_n} M_{n,i}$ where each chain complex $M_{n,i}$ has $|M_{n,i}| < \kappa$ for all $i \in
I_n$. Then for any choice of subcollections $J_n \subseteq I_n$ (at
least one of which is nonempty), with $|J_n| < \kappa$, we can find a
nonzero subcomplex $\mathbb{S} \subseteq \mathbb{X}$ with each $\mathbb{S}_n = \oplus_{i \in K_n}
M_{n,i}$ for some subcollections $K_n \subseteq I_n$ satisfying $J_n
\subseteq K_n$ and $|K_n| < \kappa$.
\end{lemma}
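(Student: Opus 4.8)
The plan is to build the subcomplex $\mathbb{S}$ by an iterative "closing up" process, exactly as in the classical Hill-type covering arguments. Starting from the finite data $J_n \subseteq I_n$, the obstacle is that the differential of $\mathbb{X}$ need not respect the direct sum decomposition: applying $d$ to an element of $M_{n,i}$ produces an element of $\mathbb{X}_{n-1}$ whose support (the set of indices in $I_{n-1}$ on which it has a nonzero component) may be infinite in principle, but is in fact finite since any single element of a direct sum has finite support, and more importantly the support of $d(\bigoplus_{i \in J_n} M_{n,i})$ has cardinality $< \kappa$ because $|\bigoplus_{i \in J_n} M_{n,i}| < \kappa$ (as $\kappa$ is infinite and $|J_n| < \kappa$ with each $|M_{n,i}| < \kappa$). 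So the first step is to record: for any subcollection $L_n \subseteq I_n$ with $|L_n| < \kappa$, the set of indices in $I_{n-1}$ appearing in the support of $d(\bigoplus_{i \in L_n} M_{n,i})$ again has cardinality $< \kappa$. The same applies upward along the differential into degree $n+1$ is not needed — only the downward direction matters for closure under $d$ — but to get a genuine subcomplex we only need that $d(\mathbb{S}_n) \subseteq \mathbb{S}_{n-1}$.

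Next I would set up the transfinite (in fact countable) recursion. Set $K_n^{(0)} = J_n$ for all $n$. Given $K_n^{(k)}$ for all $n$, define $K_{n-1}^{(k+1)}$ to be $K_{n-1}^{(k)}$ together with all indices in $I_{n-1}$ that appear in the support of $d(\bigoplus_{i \in K_n^{(k)}} M_{n,i})$; by the first step each $K_n^{(k+1)}$ still has cardinality $< \kappa$ (a countable union of sets of size $< \kappa$ has size $< \kappa$ since $\kappa$ is infinite — here we use that $\kappa \ge \aleph_1$ is not required; for $\kappa = \aleph_0$ one notes the process terminates after finitely many steps because each $M_{n,i}$ is bounded, but in general a countable union of finite sets can be countable, so one should take $\kappa$ infinite and use $\aleph_0 \cdot \mu = \mu$ for infinite $\mu$; when $\kappa = \aleph_0$ the relevant complexes $M_{n,i}$ are finitely generated hence bounded, so only finitely many degrees are involved and the recursion stabilizes). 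Then put $K_n = \bigcup_{k < \omega} K_n^{(k)}$ and $\mathbb{S}_n = \bigoplus_{i \in K_n} M_{n,i}$. By construction $d(\mathbb{S}_n) \subseteq \mathbb{S}_{n-1}$, so $\mathbb{S}$ is a subcomplex; it is nonzero because some $J_n$ is nonempty and $J_n \subseteq K_n$; and $|K_n| < \kappa$ for every $n$. This is the whole construction.

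The main obstacle, and the only subtle point, is the cardinality bookkeeping: one must verify that closing up under the differential countably many times does not blow the index sets past $\kappa$. This is handled by the inequality $|\bigoplus_{i \in L} M_{n,i}| \le |L| \cdot \sup_i |M_{n,i}| < \kappa$ for $|L| < \kappa$ and $\kappa$ infinite, together with $\aleph_0 \cdot \mu = \mu$ for infinite $\mu$; the degenerate case $\kappa = \aleph_0$ is dispatched separately using that $FP_\infty$/countably generated pieces relevant there are bounded complexes (or one simply restricts attention to $\kappa \ge \aleph_1$, which suffices for the intended application via Lemma~\ref{lemma-Kaplansky for chain complexes}). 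Everything else is routine: $\mathbb{S}_n$ is a direct summand of $\mathbb{X}_n$ because it is spanned by a subcollection of the summands, and the inclusion $\mathbb{S} \hookrightarrow \mathbb{X}$ is a chain map by the closure property. I would also remark that this lemma, together with Kaplansky's Lemma~\ref{lemma-Kaplansky for chain complexes}, is precisely what feeds the deconstructibility/cogenerated-by-a-set arguments for the cotorsion pair $(\class{C}, \rightperp{\class{C}})$ of Theorem~\ref{thm-how to create projective on chain}.
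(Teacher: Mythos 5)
Your proof is correct and takes essentially the same approach as the paper, namely closing the given index sets downward under the differential: the paper organizes this by building, for each degree $n$, a bounded-above subcomplex $\mathbb{X}^n$ covering $J_n$ and then taking $\mathbb{S} = \sum_n \mathbb{X}^n$, while you run a simultaneous countable iteration $K_n^{(k)}$ across all degrees and union over $k$, but the content is identical. Your aside about $\kappa = \aleph_0$ (and more generally about $\kappa$ of countable cofinality) is a fair caution that both arguments quietly rely on when taking a countable union of index sets each of size $< \kappa$; the paper does not remark on it, but it is harmless in context because every application takes $\kappa$ to be a regular cardinal strictly greater than $\omega$.
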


\begin{proof}
Suppose we are given such subcollections $J_n \subseteq I_n$. First,
for each $n$, we may build a subcomplex $\mathbb{X}^n$ of $\mathbb{X}$ as follows: In
degree $n$ the (double) complex will consist of $\oplus_{i \in J_n}
M_{n,i}$. Then noting $d(\oplus_{i \in J_n} M_{n,i}) \subseteq
\oplus_{i \in I_{n-1}} M_{n-1,i}$  and $|d(\oplus_{i \in J_n} M_{n,i})| < \kappa$, 
we can find a subset $L_{n-1} \subseteq I_{n-1}$ such that $|L_{n-1}| < \kappa$
and yet $d(\oplus_{i \in J_n} M_{n,i}) \subseteq \oplus_{i \in L_{n-1}} M_{n-1,i}$.
Now the subcomplex of $\mathbb{X}$ that 
we are constructing will consist of $\oplus_{i \in L_{n-1}} M_{n-1,i}$
in degree $n-1$. We continue down in the same way finding $L_{n-2}
\subseteq I_{n-2}$ with $|L_{n-2}| < \kappa$ and with $d(\oplus_{i \in
L_{n-1}} M_{n-1,i}) \subseteq \oplus_{i \in L_{n-2}} M_{n-2,i}$. In
this way we get a subcomplex of $\mathbb{X}$: $$\mathbb{X}^n = \cdots \xrightarrow{} 0
\xrightarrow{} \oplus_{i \in J_{n}} M_{n,i} \xrightarrow{} \oplus_{i
\in L_{n-1}} M_{n-1,i} \xrightarrow{} \oplus_{i \in L_{n-2}} M_{n-2,i}
\xrightarrow{} \cdots $$ Finally set $\mathbb{S} = \Sigma_{l \in \N} \mathbb{X}^l$ and note
that this double complex, obviously nonzero because at least one $J_n \neq
\phi$, will work. (The sets $K_n$ we claim to exist are the union of
all the $J_n$'s and all the various $L_i$ in sight. We still have
$|K_n| < \kappa$.)
\end{proof}

Now we have a similar lemma but concerning exact complexes of chain complexes.

\begin{lemma}[Exact Covering Lemma for double complexes]\label{lemma-exact covering lemma}
Let $\kappa$ be an infinite cardinal and suppose $\mathbb{Y}$ is an exact
complex of chain complexes in which each $\mathbb{Y}_n$ has a direct sum decomposition $\mathbb{Y}_n =
\oplus_{i \in I_n} M_{n,i}$ where each chain complex $M_{n,i}$ has $|M_{n,i}| < \kappa$ for all $i \in
I_n$. Then for any choice of subcollections $K_n \subseteq I_n$, with
$|K_n| < \kappa$, we can find an exact subcomplex $\mathbb{T} \subseteq \mathbb{Y}$ with
each $\mathbb{T}_n = \oplus_{i \in J_n} M_{n,i}$ for some subcollections $J_n
\subseteq I_n$ satisfying $K_n \subseteq J_n$ and $|J_n| < \kappa$.
\end{lemma}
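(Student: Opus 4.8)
\textbf{Proof proposal for Lemma~\ref{lemma-exact covering lemma}.} The plan is to iterate the non-exact Covering Lemma (Lemma~\ref{lemma-covering lemma}) countably many times, at each stage enlarging the index sets just enough to absorb the failure of exactness of the subcomplex produced at the previous stage, and then take the union. Start by applying Lemma~\ref{lemma-covering lemma} with $J_n = K_n$ to obtain a nonzero subcomplex $\mathbb{S}^0 \subseteq \mathbb{Y}$ with $\mathbb{S}^0_n = \oplus_{i \in K^0_n} M_{n,i}$, $K_n \subseteq K^0_n$, $|K^0_n| < \kappa$. This $\mathbb{S}^0$ need not be exact, but since $\mathbb{Y}$ is exact, any cycle $z \in Z_n(\mathbb{S}^0) \subseteq Z_n(\mathbb{Y})$ is a boundary $z = d(y)$ for some $y \in \mathbb{Y}_{n+1}$; such a $y$ lives in $\oplus_{i \in F} M_{n+1,i}$ for some finite $F$. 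The subtlety is that $Z_n(\mathbb{S}^0)$ can be large, so we must choose a generating set of manageable size.

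First I would fix, for each $n$, a generating set $G_n$ of the module $Z_n(\mathbb{S}^0)$ of size $< \kappa$: this is possible because $\mathbb{S}^0_n$ has cardinality $< \kappa$ (being a sub-sum of the $M_{n,i}$ with $|K^0_n| < \kappa$ and each $|M_{n,i}| < \kappa$, while $\kappa$ is infinite), hence so does its subobject $Z_n(\mathbb{S}^0)$, and a set of that cardinality certainly generates. For each generator $g \in G_n$ choose a preimage $y_g \in \mathbb{Y}_{n+1}$ with $d(y_g) = g$, and let $L^1_{n+1} \subseteq I_{n+1}$ be the union of $K^0_{n+1}$ with the (finite) supports of all the $y_g$, $g \in G_n$; then $|L^1_{n+1}| < \kappa$. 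Now apply Lemma~\ref{lemma-covering lemma} again, this time with $J_n = L^1_n$, to get a subcomplex $\mathbb{S}^1 \supseteq \mathbb{S}^0$ with $\mathbb{S}^1_n = \oplus_{i \in K^1_n} M_{n,i}$, $L^1_n \subseteq K^1_n$, $|K^1_n| < \kappa$. By construction every cycle of $\mathbb{S}^0$ is a boundary in $\mathbb{S}^1$. Repeat: having built $\mathbb{S}^k$, choose generators of each $Z_n(\mathbb{S}^k)$, lift them into $\mathbb{Y}$, throw the supports into new index sets $L^{k+1}_{n+1}$, and run Lemma~\ref{lemma-covering lemma} to produce $\mathbb{S}^{k+1} \supseteq \mathbb{S}^k$. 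Finally set $\mathbb{T} = \bigcup_{k \in \N} \mathbb{S}^k = \Sigma_{k \in \N} \mathbb{S}^k$, so $\mathbb{T}_n = \oplus_{i \in J_n} M_{n,i}$ with $J_n = \bigcup_k K^k_n$; since this is a countable increasing union of sets of size $< \kappa$ and $\kappa$ is infinite, we could only lose the bound $|J_n| < \kappa$ if $\kappa = \aleph_0$, so one either assumes $\kappa$ uncountable or, more simply, observes the countable union of countable sets is countable, giving $|J_n| \le \kappa$; with the convention $|M_{n,i}| < \kappa$ forcing $\kappa \ge \aleph_1$ (as in the intended application via Kaplansky, Lemma~\ref{lemma-Kaplansky for chain complexes}), the strict bound holds. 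Clearly $K_n \subseteq J_n$.

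It remains to check $\mathbb{T}$ is exact. Let $z \in Z_n(\mathbb{T})$. Since $\mathbb{T} = \bigcup_k \mathbb{S}^k$ is a union of an increasing chain of subcomplexes and $z$ has finite support, $z \in \mathbb{S}^k_n$ for some $k$, and then $z \in Z_n(\mathbb{S}^k)$. By construction, $z$ is a $\mathbb{Z}$-linear (indeed $R$-linear, after spelling out that the generating sets were chosen as module generators) combination of the chosen generators of $Z_n(\mathbb{S}^k)$, each of which is killed by a preimage lying in $\mathbb{S}^{k+1}_{n+1} \subseteq \mathbb{T}_{n+1}$; hence $z \in B_n(\mathbb{T})$. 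Thus $Z_n(\mathbb{T}) = B_n(\mathbb{T})$ for all $n$, so $\mathbb{T}$ is exact, and it is a subcomplex of $\mathbb{Y}$ of the required form.

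\textbf{Main obstacle.} The only real point requiring care is the cardinality bookkeeping: we must ensure that at each of the countably many stages the generating sets $G_n$, hence the new index sets $L^{k+1}_n$, stay of size $< \kappa$, and that the final countable union $J_n = \bigcup_k K^k_n$ still satisfies $|J_n| < \kappa$. Both are automatic once $\kappa$ is a regular uncountable cardinal, and in the application $\kappa$ will be taken large enough (e.g.\ regular and bigger than the cardinalities of $R$ and the chosen countably generated projective summands) for this to cause no trouble; the rest of the argument is a routine "lifting cycles to boundaries" iteration of Lemma~\ref{lemma-covering lemma}.
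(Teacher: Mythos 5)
Your proof is correct (granted the same implicit hypothesis the paper itself needs, namely that $\kappa$ is regular so that sums of fewer than $\kappa$ objects each of size $<\kappa$ stay below $\kappa$, and that $\kappa>\aleph_0$ so that the final countable union of index sets stays below $\kappa$), but it takes a genuinely different route from the paper. The paper's proof has a two-step structure: Step~1 is a lemma-within-a-lemma which, given an exact subcomplex $\mathbb{X}\subseteq\mathbb{Y}$ of size $<\kappa$, produces a larger \emph{exact} subcomplex $\mathbb{T}\supseteq\mathbb{X}$ having the direct-sum form $\oplus_{i\in L_n}M_{n,i}$ at \emph{one prescribed degree} $n$ (it is built by enlarging at degree $n$, pushing down via $d$, and lifting cycles upward); Step~2 then iterates Step~1 over all degrees in a back-and-forth pattern and takes the $\omega$-union, with a cofinality argument showing that at each degree the limit inherits the direct-sum form. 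Thus the paper preserves exactness at every stage and only recovers the direct-sum shape in the limit. You do the opposite: by invoking Lemma~\ref{lemma-covering lemma} as a black box at each stage you preserve the direct-sum shape at every degree throughout, interleaving it with a ``lift generators of cycles to boundaries'' step, and recover exactness only in the limit. Your version reuses Lemma~\ref{lemma-covering lemma} directly (the paper in effect re-derives it inside Step~1), and it makes the limit step cleaner, since $\mathbb{T}_n=\bigcup_k\oplus_{i\in K^k_n}M_{n,i}=\oplus_{i\in\bigcup_k K^k_n}M_{n,i}$ is immediate rather than requiring the cofinality argument over degree-$n$ operations. The one small thing worth spelling out is that when you close up $Z_n(\mathbb{S}^k)$ from its generating set $G_n$, ``generates'' must be understood as generation as a subobject of $\ch$ (equivalently, as a graded $R[x]/(x^2)$-module), so closure under the internal differential is built in; since $B_n(\mathbb{T})$ is a sub\emph{complex} of $\mathbb{T}_n$ containing $G_n$, it then indeed contains all of $Z_n(\mathbb{S}^k)$, which is what your final paragraph needs.
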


\begin{proof}
We prove this in two steps.

\noindent (Step 1). We first show the following: If $\mathbb{X} \subseteq \mathbb{Y}$ is
any exact subcomplex with $|\mathbb{X}| < \kappa$, then for any single one of
the given $K_n$, we can find an exact subcomplex $\mathbb{T} \subseteq \mathbb{Y}$
containing $\mathbb{X}$ and so that for this given $n$, $\mathbb{T}_n = \oplus_{i \in
L_n} M_{n,i}$ for some $L_n \subseteq I_n$ with $K_n \subseteq L_n$
and $|L_n| < \kappa$.

First, we can find for the given $n$ (since $|\mathbb{X}_n| < \kappa$), a subset $D_n \subseteq 
I_n$ with $|D_n| < \kappa$ such that $\mathbb{X}_n \subseteq \oplus_{i \in D_n} M_{n,i}$. 
Now define $L_n = D_n \cup K_n$ and set $\mathbb{T}_n = \oplus_{i \in
L_n} M_{n,i}$. Of course $|L_n| < \kappa$ and $\mathbb{X}_n \subseteq \mathbb{T}_n$.

So all we need to do is extend $\mathbb{T}_n$ into an exact subcomplex
containing $\mathbb{X}$ and with cardinality less than $\kappa$. We build down
by setting $\mathbb{T}_{n-1} = \mathbb{X}_{n-1} + d(\mathbb{T}_n)$ and $\mathbb{T}_i = \mathbb{X}_i$ for all $i <
n-1$. One can check that \[\mathbb{T}_n \xrightarrow{} \mathbb{X}_{n-1} + d(\mathbb{T}_n) \xrightarrow{} \mathbb{X}_{n-2} \xrightarrow{}
\cdots\] is exact. In particular, we have exactness in degree $n-1$
since $d(\mathbb{X}_n) \subseteq d(\mathbb{T}_n)$.

Next we build up from $\mathbb{T}_n$. To start, take the kernel of $\mathbb{T}_n \xrightarrow{}
\mathbb{T}_{n-1}$ and find a $\mathbb{T}'_{n+1} \subseteq \mathbb{Y}_{n+1}$ such that $|\mathbb{T}'_{n+1}|
< \kappa$ and $\mathbb{T}'_{n+1}$ maps surjectively onto this kernel. Then take
$\mathbb{T}_{n+1} = \mathbb{X}_{n+1} + \mathbb{T}'_{n+1}$. Now $\mathbb{T}_{n+1}$ also maps surjectively
onto this kernel. We continue upward to build $\mathbb{T}_{n+2}, \mathbb{T}_{n+3},
\cdots$ in the same way and we are done.

\

\noindent (Step 2). We now finish the proof. From Step 1, taking $\mathbb{X} = 0$ and the
subcollection to be $K_0$ we can find an exact subcomplex $\mathbb{T}^0
\subseteq \mathbb{Y}$ such that $(\mathbb{T}^0)_0 = \oplus_{i \in L_0} M_{0,i}$ for some
$L_0 \subseteq I_0$ with $K_0 \subseteq L_0$ and $|L_0| < \kappa$.
Now using Step 1 again, with $\mathbb{X} = \mathbb{T}^0$ and using $K_{-1}$, we get
another exact subcomplex $\mathbb{T}^1$ containing $\mathbb{T}^0$ and such that
$(\mathbb{T}^1)_{-1} = \oplus_{i \in L_{-1}} M_{{-1},i}$ for some $L_{-1}
\subseteq I_{-1}$ with $K_{-1} \subseteq L_{-1}$ and $|L_{-1}| <
\kappa$. Lets say that $\mathbb{T}^0$ was constructed using a ``degree 0
operation'' and $\mathbb{T}^1$ was constructed using a ``degree -1
operation''. Then we can continue to use ``degree $k$ operations''
with the following back and forth pattern on $k$:
\[0, \ \ -1,0,1, \ \ -2,-1,0,1,2, \ \ -3, -2, -1, 0, 1,2,3 \ \
\cdots\] to build an increasing chain of exact subcomplexes, $\{\, \mathbb{T}^l
\,\}$.  Finally set $\mathbb{T} = \cup_{l \in \N} \mathbb{T}^l$. Then by a cofinality
argument we see that for each $n$ we have $\mathbb{T}_n = \oplus_{i \in J_n}
M_{n,i}$ for some subsets $J_n \subseteq I_n$ (the $J_n$'s are each a
countable union of the newly constructed $L_n$'s obtained in each
``pass'', and so $|J_n| < \kappa$). Clearly each $K_n \subseteq J_n$
and $\mathbb{T}$ is an exact subcomplex of $\mathbb{Y}$.
\end{proof}

With these lemmas in hand, we now return to the hypotheses of Theorem~\ref{thm-how to create projective on chain}. First suppose that we are given a chain complex $\mathbb{P} \in \textnormal{Ch}(\ch)$ of projective chain complexes. By the Kaplansky result, Lemma~\ref{lemma-Kaplansky for chain complexes}, we can write each component $\mathbb{P}_n$ as a direct sum $\mathbb{P}_n = \oplus_{i
\in I_n} P_{n,i}$ where each $P_{n,i}$ is a countably generated projective chain complex. Note that if $\kappa > \text{max}\{\,
|R| \, , \, \omega \,\}$ is a regular cardinal, then $|P_{n,i}| <
\kappa$.

Next, referring again to the hypotheses of Theorem~\ref{thm-how to create projective on chain}, assume we are given a chain complex $A$ of right $R$-modules. Using the natural isomorphism (see~\cite[Prop.~4.2.1]{garcia-rozas})
\[
X \overline{\otimes} (\oplus_{i \in \class{S}} Y_i) \cong \oplus_{i \in
\class{S}} (X \overline{\otimes} Y_i)
\]
we may \emph{identify} $A \overline{\otimes} \,\mathbb{P}$ with the complex whose degree
$n$ is $\oplus_{i \in I_n} A \overline{\otimes} P_{n,i}$. Moreover, for any
subcomplex $\mathbb{S} \subseteq \mathbb{P}$ of the form $\mathbb{S}_n = \oplus_{i \in K_n}
P_{n,i}$ for some $K_n \subseteq I_n$ we can and will identify $A
\overline{\otimes} \,\mathbb{S}$ with the subcomplex of $A \overline{\otimes} \,\mathbb{P}$ whose degree $n$
is $$\oplus_{i \in K_n} A \overline{\otimes} P_{n,i} \subseteq \oplus_{i \in
I_n} A \overline{\otimes} P_{n,i}.$$ We note that if $\kappa > \text{max}\{\,
|R| \, , \, \omega \,\}$ is a regular cardinal, then such a subcomplex
$\mathbb{S}$ satisfies $|\mathbb{S}| < \kappa$ whenever $|K_n| < \kappa$. Similarly, if
$\kappa > \text{max}\{\, |A| \, , \, \omega \,\}$ is a regular
cardinal, note that $|A
\overline{\otimes} \,\mathbb{S}| < \kappa$ whenever $|K_n| <
\kappa$. We will use all of the above observations in the proof of our
theorem below.

\begin{theorem}\label{theorem-filtrations for complexes of projectives}
Let $A$ be a given chain complex of right $R$-modules and take $\kappa > \text{max}\{\, |R| \,
, \, |A| \, , \, \omega \,\}$ to be a regular cardinal. Let $\mathbb{P}$ be any
nonzero complex of projective complexes in which $A \overline{\otimes} \,\mathbb{P}$ is exact. Then
we can write $\mathbb{P}$ as a continuous union $\mathbb{P} = \cup_{\alpha < \lambda}
\mathbb{Q}_{\alpha}$ where each $\mathbb{Q}_{\alpha}, \mathbb{Q}_{\alpha + 1}/\mathbb{Q}_{\alpha}$ are
also $A \overline{\otimes} \,-\,$exact complexes of projective complexes (that is, each is $A$-acyclic) and $|\mathbb{Q}_{\alpha}|,
|\mathbb{Q}_{\alpha + 1}/\mathbb{Q}_{\alpha}| < \kappa$.
\end{theorem}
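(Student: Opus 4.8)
The plan is to prove Theorem~\ref{theorem-filtrations for complexes of projectives} by a standard transfinite ``small subobject'' argument, using the two Covering Lemmas (Lemma~\ref{lemma-covering lemma} and Lemma~\ref{lemma-exact covering lemma}) as the engine that produces the small pieces $\mathbb{Q}_{\alpha}$. The key point is that we must produce subcomplexes $\mathbb{Q}_{\alpha} \subseteq \mathbb{P}$ that simultaneously (i) are direct summands of $\mathbb{P}$ degreewise in a way compatible with the decomposition $\mathbb{P}_n = \oplus_{i \in I_n} P_{n,i}$, so that each $\mathbb{Q}_{\alpha}$ and each quotient $\mathbb{Q}_{\alpha+1}/\mathbb{Q}_{\alpha}$ is again a complex of projective complexes; and (ii) are ``$A$-pure'' in $\mathbb{P}$, meaning $A \overline{\otimes} \mathbb{Q}_{\alpha}$ is an exact subcomplex of $A \overline{\otimes} \mathbb{P}$ and the quotient $A \overline{\otimes} (\mathbb{P}/\mathbb{Q}_{\alpha}) \cong (A \overline{\otimes} \mathbb{P})/(A \overline{\otimes} \mathbb{Q}_{\alpha})$ is again exact. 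Once such a filtration exists, condition~(i) together with the exactness of $A \overline{\otimes} \mathbb{Q}_{\alpha}$ and $A \overline{\otimes} \mathbb{P}/\mathbb{Q}_{\alpha}$ (hence of all the successive quotients) gives exactly the conclusion, and the cardinality bounds come along because each step only ever adjoins fewer than $\kappa$ of the summands $P_{n,i}$.

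The construction proceeds as follows. Fix the decompositions $\mathbb{P}_n = \oplus_{i \in I_n} P_{n,i}$ into countably generated projective complexes, so that $|P_{n,i}| < \kappa$, and recall that we have identified $A \overline{\otimes} \mathbb{P}$ degreewise with $\oplus_{i \in I_n}(A \overline{\otimes} P_{n,i})$ and, for any choice of $K_n \subseteq I_n$, the subcomplex $\mathbb{S}$ with $\mathbb{S}_n = \oplus_{i \in K_n} P_{n,i}$ satisfies $|\mathbb{S}| < \kappa$ and $|A \overline{\otimes} \mathbb{S}| < \kappa$ whenever all $|K_n| < \kappa$. I build the $\mathbb{Q}_\alpha$ by recursion: at successor stages, given $\mathbb{Q}_\alpha = \oplus_{i \in K_n^\alpha} P_{n,i}$ (degreewise), I pick \emph{some} summand $P_{n_0,i_0}$ not yet used (if $\mathbb{P} \neq \mathbb{Q}_\alpha$), and then alternate two moves. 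The ``summand-closure'' move applies Lemma~\ref{lemma-covering lemma} to enlarge the index sets to new $K_n \subseteq I_n$ with $|K_n| < \kappa$ so that the resulting degreewise direct sum is an honest \emph{subcomplex} of $\mathbb{P}$. The ``exactness-closure'' move works inside the exact complex $A \overline{\otimes} \mathbb{P}$: applying Lemma~\ref{lemma-exact covering lemma} to $\mathbb{Y} = A \overline{\otimes} \mathbb{P}$ with the currently chosen index sets $K_n$ produces larger sets $J_n \supseteq K_n$, still of size $< \kappa$, such that $\oplus_{i \in J_n}(A \overline{\otimes} P_{n,i})$ is an \emph{exact} subcomplex of $A \overline{\otimes} \mathbb{P}$. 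Iterating these two moves countably many times and taking the union of the index sets stabilizes (by regularity of $\kappa$) to index sets defining $\mathbb{Q}_{\alpha+1}$, a subcomplex of $\mathbb{P}$ that is a degreewise direct summand with $A \overline{\otimes} \mathbb{Q}_{\alpha+1}$ exact; at limit stages I just take unions. Because each $\mathbb{Q}_{\alpha+1}/\mathbb{Q}_\alpha$ is, degreewise, $\oplus_{i \in J_n^{\alpha+1} \setminus J_n^\alpha} P_{n,i}$, it is again a complex of projective complexes, and since $A \overline{\otimes} (-)$ is right exact and commutes with the direct sums in sight, the short exact sequence $0 \to A \overline{\otimes} \mathbb{Q}_\alpha \to A \overline{\otimes} \mathbb{Q}_{\alpha+1} \to A \overline{\otimes}(\mathbb{Q}_{\alpha+1}/\mathbb{Q}_\alpha) \to 0$ is a short exact sequence of complexes in which the first two terms are exact, forcing the third to be exact as well. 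The cardinality bounds $|\mathbb{Q}_\alpha|, |\mathbb{Q}_{\alpha+1}/\mathbb{Q}_\alpha| < \kappa$ are immediate from the size control on the index sets.

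The main obstacle — and the reason the two Covering Lemmas are stated the way they are — is reconciling the two ``closure'' requirements: being a subcomplex of $\mathbb{P}$ forces the index sets $K_n$ downward along the differential of $\mathbb{P}$, while $A$-exactness forces them in a different direction inside $A \overline{\otimes} \mathbb{P}$; neither condition is preserved by the move that fixes the other, so one must interleave them and appeal to regularity of $\kappa$ to see that the back-and-forth process closes off after $\omega$ steps. The bookkeeping of ``which summand gets adjoined when'' must also be arranged so that $\mathbb{P} = \cup_{\alpha < \lambda} \mathbb{Q}_\alpha$ actually exhausts $\mathbb{P}$ (e.g.\ well-order the total index set $\coprod_n I_n$ and always throw in the least index not yet present), and one checks continuity at limit ordinals, which is automatic since unions of directed systems of such index sets again define subcomplexes with $A \overline{\otimes} (-)$ of the union equal to the union of the $A \overline{\otimes}(-)$'s. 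Everything else is routine, using only that $A \overline{\otimes} (-)$ is right exact and commutes with direct sums.
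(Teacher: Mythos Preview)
Your proposal is correct and follows essentially the same route as the paper: alternate the Covering Lemma (to close up to a subcomplex of $\mathbb{P}$) with the Exact Covering Lemma (applied to $A\overline{\otimes}\mathbb{P}$ to force exactness), take the countable union using regularity of $\kappa$, and then iterate transfinitely with unions at limits. The only cosmetic difference is that the paper packages the back-and-forth as a standalone ``Step~1'' producing one small $A$-acyclic piece and then in ``Step~2'' reapplies it to the quotient $\mathbb{P}/\mathbb{Q}_\alpha$, whereas you enlarge index sets inside $\mathbb{P}$ directly at each successor stage; since $\mathbb{P}/\mathbb{Q}_\alpha$ is identified degreewise with $\bigoplus_{i\in I_n\setminus K_n^\alpha}P_{n,i}$, these are the same argument.
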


\begin{proof}
As described before the statement of the theorem, we write each $\mathbb{P}_n = \oplus_{i \in I_n} P_{n,i}$ where each $P_{n,i}$ is a
countably generated projective complex. We prove the theorem in two steps.

\noindent (Step 1). We first show the following: We can find a nonzero
subcomplex $\mathbb{Q} \subseteq \mathbb{P}$ of the form $\mathbb{Q}_n = \oplus_{i \in L_n}
P_{n,i}$ for some subcollections $L_n \subseteq I_n$ having $|L_n| <
\kappa$ and such that $A \overline{\otimes} \,\mathbb{Q}$ is exact.

Since $\mathbb{P}$ is nonzero at least one $\mathbb{P}_n \neq 0$. For this $n$, take any
nonempty $J_n \subseteq I_n$ having $|J_n| < \kappa$. Apply the
Covering Lemma~\ref{lemma-covering lemma} with $\mathbb{P}$ in the place of $\mathbb{X}$ and taking the subcollections to consist of this $J_n$ and all the other $J_n$ may be
empty. This gives us a nonzero subcomplex with $\mathbb{S}^1_n = \oplus_{i \in
K^1_n} P_{n,i}$ for some subcollections $K^1_n \subseteq I_n$
satisfying $J_n \subseteq K^1_n$ and $|K^1_n| < \kappa$ for each $n$.

Now $A \overline{\otimes} \,\mathbb{S}^1$ is the subcomplex of $A \overline{\otimes} \,\mathbb{P}$ having $(A \overline{\otimes} \,\mathbb{S}^1)_n = \oplus_{i \in K^1_n} A \overline{\otimes} P_{n,i}$. That is,
the subcollections $K^1_n \subseteq I_n$ determine $A \overline{\otimes} \,\mathbb{S}^1$. We now apply the Exact Covering Lemma~\ref{lemma-exact covering lemma} with $A \overline{\otimes} \,\mathbb{P}$ in
the place of $\mathbb{Y}$ and taking the subcollections to be the $K^1_n$. This
gives us an exact subcomplex $\mathbb{T}^1 \subseteq A \overline{\otimes} \,\mathbb{P}$ with each
$\mathbb{T}^1_n = \oplus_{i \in J^1_n} A \overline{\otimes} P_{n,i}$ for some
subcollections $J^1_n \subseteq I_n$ satisfying $K^1_n \subseteq
J^1_n$ and $|J^1_n| < \kappa$.

But perhaps now the direct sums $\oplus_{i \in J^1_n} P_{n,i}$ don't
even form a \emph{subcomplex} of $\mathbb{P}$ (because the tensor product with
$A$ may send some maps to $0$). So we again apply the Covering Lemma
to $\mathbb{P}$ with the $J^1_n$ as the subcollections to find a subcomplex
$\mathbb{S}^2 \subseteq \mathbb{P}$ with each $\mathbb{S}^2_n = \oplus_{i \in K^2_n} P_{n,i}$ for
some subcollections $K^2_n \subseteq I_n$ satisfying $J^1_n \subseteq
K^2_n$ and $|K^2_n| < \kappa$. Of course $\mathbb{S}^1 \subseteq \mathbb{S}^2$ because
$K^1_n \subseteq K^2_n$ for each $n$.

But now certainly $A \overline{\otimes} \,\mathbb{S}^2$ need not be exact, so we again
apply the Exact Covering Lemma to $A \overline{\otimes} \,\mathbb{P}$ taking the
subcollections to be the $K^2_n$. This gives us an exact subcomplex
$\mathbb{T}^2 \subseteq A \overline{\otimes} \,\mathbb{P}$ with each $\mathbb{T}^2_n = \oplus_{i \in J^2_n}
A \overline{\otimes} P_{n,i}$ for some subcollections $J^2_n \subseteq I_n$
satisfying $K^2_n \subseteq J^2_n$ and $|J^2_n| < \kappa$. Notice that
we have $A \overline{\otimes} \,\mathbb{S}^1 \subseteq \mathbb{T}^1 \subseteq A \overline{\otimes} \,\mathbb{S}^2
\subseteq \mathbb{T}^2$ because $K^1_n \subseteq J^1_n \subseteq K^2_n
\subseteq J^2_n$.

But again, the $\oplus_{i \in J^2_n} P_{n,i}$ need not form a
subcomplex of $\mathbb{P}$. So we continue this back and forth method, applying the
Covering Lemma to $\mathbb{P}$ and the newly obtained subcollections $J^l_n$,
and then applying the Exact Covering Lemma to $A \overline{\otimes} \,\mathbb{P}$ and the
newly found subcollections $K^l_n$. We obtain an increasing sequence
of subcomplexes of $\mathbb{P}$ $$0 \neq \mathbb{S}^1 \subseteq \mathbb{S}^2 \subseteq \mathbb{S}^3
\subseteq \cdots$$ corresponding to the subcollections $J^1_n
\subseteq J^2_n \subseteq J^3_n \subseteq \cdots$. We also get an
increasing sequence of subcomplexes of $A \overline{\otimes} \,\mathbb{P}$
$$A \overline{\otimes} \,\mathbb{S}^1 \subseteq \mathbb{T}^1 \subseteq A \overline{\otimes} \,\mathbb{S}^2 \subseteq
\mathbb{T}^2 \subseteq A \overline{\otimes} \,\mathbb{S}^3 \subseteq  \mathbb{T}^3 \subseteq \cdots$$
with each $\mathbb{T}^l$ exact.

So we set $\mathbb{Q} = \cup_{l \in \N} \mathbb{S}^l$ and claim that $\mathbb{Q}$ satisfies the
properties we sought. Indeed notice each $\mathbb{Q}_n = \oplus_{i \in L_n}
P_{n,i}$ where $L_n = \cup_{l \in \N} J^l_n$. Also we still have
$|L_n| < \kappa$. Finally, since $A \overline{\otimes} \,-\,$ commutes with direct
limits we get $A \overline{\otimes} \,\mathbb{Q} = \cup_{l \in \N} A \overline{\otimes} \,\mathbb{S}^l
=\cup_{l \in \N} \mathbb{T}^l$. This complex is exact because each $\mathbb{T}^l$ is
exact.

\

\noindent (Step 2). We now can easily finish to obtain the desired
continuous union. Start by finding a nonzero $\mathbb{Q}^0 \subseteq \mathbb{P}$ of the
form $\mathbb{Q}^0_n = \oplus_{i \in L^0_n} P_{n,i}$ for some subcollections
$L^0_n \subseteq I_n$ having $|L^0_n| < \kappa$ and such that $A \overline{\otimes} \,\mathbb{Q}^0$ is exact. Note that $\mathbb{Q}^0$ and $\mathbb{P}/\mathbb{Q}^0$ are also
complexes of projective complexes and since $0 \xrightarrow{} \mathbb{Q}^0
\xrightarrow{} \mathbb{P} \xrightarrow{} \mathbb{P}/\mathbb{Q}^0 \xrightarrow{} 0$ is a
degreewise split short exact sequence, so must be $$0 \xrightarrow{} A \overline{\otimes} \,\mathbb{Q}^0 \xrightarrow{} A \overline{\otimes} \,\mathbb{P} \xrightarrow{} A \overline{\otimes} \,\mathbb{P}/\mathbb{Q}^0 \xrightarrow{} 0.$$ It follows that $A \overline{\otimes} \,\mathbb{P}/\mathbb{Q}^0$ must also
be exact. So if it happens that $\mathbb{P}/\mathbb{Q}^0$ is nonzero we can in turn find
a nonzero subcomplex $\mathbb{Q}^1/\mathbb{Q}^0 \subseteq \mathbb{P}/\mathbb{Q}^0$ with $\mathbb{Q}^1/\mathbb{Q}^0$ and
$$(\mathbb{P}/\mathbb{Q}^0)/(\mathbb{Q}^1/\mathbb{Q}^0) \cong \mathbb{P}/\mathbb{Q}^1$$ both $A \overline{\otimes} \,-\,$ exact complexes
of projective complexes with cardinality less than $\kappa$. Note that we can
identify these quotients such as $\mathbb{P}/\mathbb{Q}^0$ as complexes whose degree $n$
entry is $\oplus_{i \in I_n-L_n} P_{n,i}$ and in doing so we may
continue to find an increasing union $0 \neq \mathbb{Q}^0 \subseteq \mathbb{Q}^1
\subseteq \mathbb{Q}^2 \subseteq \cdots $ corresponding to a nested union of
subsets $L^0_n \subseteq L^1_n \subseteq L^2_n \subseteq \cdots$ for
each $n$. Assuming this process doesn't terminate we set $\mathbb{Q}^{\omega} =
\cup_{\alpha < \omega} \mathbb{Q}^{\alpha}$ and note that $\mathbb{Q}^{\omega}_n =
\oplus_{i \in L^{\omega}_n} P_{n,i}$ where $L^{\omega}_n =
\cup_{\alpha < \omega} L^{\alpha}_n$. So still, $\mathbb{Q}^{\omega}$ and
$\mathbb{P}/\mathbb{Q}^{\omega}$ are complexes of projective complexes and are $A \overline{\otimes} \,-\,$
exact since $A \overline{\otimes} \,-\,$ commutes with direct limits. Therefore we
can continue this process with $\mathbb{P}/\mathbb{Q}^{\omega}$ to obtain $\mathbb{Q}^{\omega
+1}$ with all the properties we desire. Using this process we can
obtain an ordinal $\lambda$ and a continuous union $\mathbb{P} = \cup_{\alpha <
\lambda} \mathbb{Q}^{\alpha}$ with $\mathbb{Q}_{\alpha}, \mathbb{Q}_{\alpha + 1}/\mathbb{Q}_{\alpha}$ all being
$A \overline{\otimes} \,-\,$ exact complexes of projective complexes and having $|\mathbb{Q}_{\alpha}|,
|\mathbb{Q}_{\alpha +1}/\mathbb{Q}_{\alpha}| < \kappa$.
\end{proof}

We can now prove Theorem~\ref{thm-how to create projective on chain}

\begin{proof}
The plan is to apply Proposition~\ref{prop-how to create a projective
model structure}. First let $\kappa > \text{max}\{\, |R| \, , \, |A|
\, , \, \omega \,\}$ be a regular cardinal and let $S$ be the set of
all $A$-acyclic complexes of projective complexes $\mathbb{P} \in \class{C}$ 
such that $|\mathbb{P}| \leq \kappa$. (We
really need to take a representative for each isomorphism class so
that we actually get a set as opposed to a proper class). Now the
set $S$ cogenerates a complete cotorsion pair (by~\cite[Theorem~2.4]{hovey})
$(\leftperp{(\rightperp{S})},\rightperp{S})$ in $\textnormal{Ch}(\ch)$, where the left side consists precisely of all retracts of transfinite extensions of complexes in $S$. But $S \subseteq \class{C}$, and $\class{C}$ is closed under retracts and transfinite extensions, so $\leftperp{(\rightperp{S})} \subseteq \class{C}$. The reverse containment $\class{C} \subseteq \leftperp{(\rightperp{S})}$ comes from Theorem~\ref{theorem-filtrations for complexes of projectives}.
This proves the first part of Proposition~\ref{prop-how to create a projective model structure}.

Setting $\class{W} = \rightperp{\class{C}}$, it is left to show that $\class{W}$ is thick and contains the projective objects of $\textnormal{Ch}(\ch)$. To see that $\cat{W}$ is thick, first note that, because $\cat{C}$ consists of complexes of projective complexes (that is, complexes with projective objects in each degree),
Lemma~\ref{lemma-homcomplex-basic-lemma} implies that $\mathbb{X} \in \cat{W}$
if and only if $\homcomplex (\mathbb{C},\mathbb{X})$ is acyclic for all $\mathbb{C} \in \cat{C}$.  Now
suppose we have a short exact sequence
\[
0 \xrightarrow{} \mathbb{X} \xrightarrow{} \mathbb{Y} \xrightarrow{} \mathbb{Z} \xrightarrow{} 0,
\]
where two out of three of the entries are in $\cat{W}$, and suppose
$\mathbb{C} \in \cat{C}$.  Since each $\mathbb{C}_n$ is a projective object, the resulting
sequence
\[
0 \xrightarrow{} \homcomplex (\mathbb{C},\mathbb{X}) \xrightarrow{} \homcomplex (\mathbb{C},\mathbb{Y}) \xrightarrow{}
\homcomplex (\mathbb{C},\mathbb{Z})\xrightarrow{} 0
\]
is still short exact.  Since two out of three of these complexes are
acyclic, so is the third. This proves thickness of $\class{W}$.

Now if $\mathbb{X}$ is contractible, then $\homcomplex (\mathbb{C},\mathbb{X})$ is obviously
acyclic for any $\mathbb{C}$, so $\mathbb{X} \in \cat{W}$. In particular, $\class{W}$ must contain the projective objects as these are contractible; for example, see~\cite[Lemma~4.5]{gillespie-G-derived}. 

So we have finished proving that the model structure exists. It is
cofibrantly generated because we are working in a Grothendieck category with enough projectives and the cotorsion pair is cogenerated by a set; see the results of~\cite[Section~6]{hovey}.
\end{proof}

\section{The AC-acyclic projective model structure on double complexes}\label{sec-double-complexes}

Let $\class{C}$ be the class of all the complexes of projectives appearing in Definition~\ref{def-Gorenstein AC-projective complex}. That is, $\class{C}$ consists of all exact complexes of projective complexes $$\mathbb{C} \equiv \cdots \rightarrow P_1 \rightarrow P_0 \rightarrow P^0 \rightarrow P^1 \rightarrow \cdots$$ which remain exact after applying $\Hom_{\ch}(-,L)$ for any level chain complex $L$. We show in this brief section that $\class{C}$ is the left half of a projective cotorsion pair, cogenerated by a set in $\textnormal{Ch}(\ch)$.

\begin{lemma}\label{lemma-AC-acyclic-bicomplexes}
Let $\mathbb{C}$ be a complex of projective complexes. The following are equivalent: 
\begin{enumerate}
\item $\mathbb{C} \in \class{C}$. That is, $\mathbb{C}$ remains exact after applying $\Hom_{\ch}(-,L)$ for any level chain complex $L$. 
\item $\mathbb{C}$ remains exact after applying $\overline{\homcomplex}(-,L)$ for any level complex $L$.
\item $\mathbb{C}$ remains exact after applying $A \overline{\otimes} -$ for any absolutely clean chain complex (of right $R$-modules) $A$.
\end{enumerate}
\end{lemma}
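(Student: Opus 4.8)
The plan is to prove the three equivalences by moving through the chain $(1) \Leftrightarrow (2)$ and $(2) \Leftrightarrow (3)$, exploiting the machinery already assembled in Section~\ref{subsec-modified hom and tensor} and Section~\ref{subsec-character duality}. For $(1) \Leftrightarrow (2)$, recall that the degree $n$ component of $\overline{\homcomplex}(\mathbb{C},L)$ is exactly $\Hom_{\ch}(\mathbb{C},\Sigma^{-n}L)$, and that the class of level complexes is closed under the suspension functor $\Sigma^{-n}$ (being characterized by exactness together with levelness of the cycle modules, both of which are suspension-stable). So applying $\overline{\homcomplex}(-,L)$ to $\mathbb{C}$ and asking for exactness is, degreewise, the same as asking that $\Hom_{\ch}(-,\Sigma^{-n}L)$ carries $\mathbb{C}$ to an exact complex of abelian groups for every $n$; running over all level $L$ and all $n$ this is visibly equivalent to condition~(1). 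One must be slightly careful that exactness of $\overline{\homcomplex}(\mathbb{C},L)$ as a complex is precisely the statement that $\mathbb{C}$ ``remains exact after applying $\Hom_{\ch}(-,L)$'' in the sense of Definition~\ref{def-Gorenstein AC-projective complex}, but this is just unwinding the definition of the differential $\lambda_n$ on $\overline{\homcomplex}$.

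For $(2) \Leftrightarrow (3)$, the key input is Proposition~\ref{prop-dual-exact}, applied to the duality pair $(\cat{C},\cat{D})$ where $\cat{C}$ is the class of absolutely clean complexes of right $R$-modules and $\cat{D}$ is the class of level complexes of left $R$-modules. That this is indeed a duality pair is recorded immediately after Corollary~\ref{cor-duality}. To invoke Proposition~\ref{prop-dual-exact} I must check its hypothesis that $\cat{D}$, the level complexes, is closed under pure quotients; this is among the nice homological properties of level complexes listed before Proposition~\ref{prop-level complexes are transfinite extensions} (level complexes are closed under pure quotients). Since $\mathbb{C}$ is a complex of projective chain complexes, Proposition~\ref{prop-dual-exact} then gives directly that $A \overline{\otimes} \mathbb{C}$ is exact for all absolutely clean $A$ if and only if $\overline{\homcomplex}(\mathbb{C},L)$ is exact for all level $L$ — which is exactly the equivalence $(2) \Leftrightarrow (3)$. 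In fact the ``In particular'' clause of Proposition~\ref{prop-dual-exact} states this verbatim, so this half of the lemma is essentially a citation once the setup is in place.

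I expect the main (very mild) obstacle to be purely bookkeeping: making sure the ``remains exact after applying $\Hom_{\ch}(-,L)$'' phrasing from Definition~\ref{def-Gorenstein AC-projective complex} is faithfully translated into ``$\overline{\homcomplex}(\mathbb{C},L)$ is exact'', and keeping the left/right $R$-module bookkeeping straight when passing to the duality pair (the absolutely clean complexes in condition~(3) are of right $R$-modules, matched against level complexes of left $R$-modules in condition~(2)). There is no deep content beyond assembling Proposition~\ref{prop-dual-exact}, the identification $\overline{\homcomplex}(X,Y)_n = \Hom_{\ch}(X,\Sigma^{-n}Y)$, and suspension-stability of the level class; the proof should be short.
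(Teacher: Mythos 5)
Your proposal is correct and follows the same route as the paper's proof: the paper likewise reduces $(1)\Leftrightarrow(2)$ to the identification $\overline{\homcomplex}(X,Y)_n = \Hom_{\ch}(X,\Sigma^{-n}Y)$ from Section~\ref{subsec-modified hom and tensor} (together with suspension-stability of the level class), and then obtains $(2)\Leftrightarrow(3)$ as a direct citation of Proposition~\ref{prop-dual-exact}. You have simply spelled out the degree-by-degree bookkeeping and the duality-pair hypotheses that the paper leaves implicit.
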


\begin{proof}
Referring to Section~\ref{subsec-modified hom and tensor} it is easy to see that the condition $\mathbb{C}$ remains exact after applying $\Hom_{\ch}(-L)$ is equivalent to requiring that it remains exact after applying $\overline{\homcomplex}(-,L)$ for any level complex $L$. But, by Proposition~\ref{prop-dual-exact}, this is equivalent to requiring that it remains exact after applying $A \overline{\otimes} -$ for any absolutely clean chain complex (of right $R$-modules) $A$.
\end{proof}

\begin{lemma}\label{lemma-test-complex}
There exists a single absolutely clean chain complex (of right $R$-modules) $A$ with the property that a complex $\mathbb{C}$ of projective complexes is in the class $\class{C}$ if and only if $A \overline{\otimes} \mathbb{C}$ is exact. 
\end{lemma}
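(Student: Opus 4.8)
The plan is to leverage the fact that, up to transfinite extensions, there are only a bounded number of absolutely clean complexes, and then package them into one test complex. By Proposition~\ref{prop-level complexes are transfinite extensions}, there is a set $\class{S} = \{A_i\}_{i \in I}$ of absolutely clean complexes of right $R$-modules such that every absolutely clean complex is a transfinite extension of complexes in $\class{S}$. Set $A = \bigoplus_{i \in I} A_i$. The first point to check is that this $A$ is itself absolutely clean: this follows because the class of absolutely clean complexes is closed under direct sums (as recorded in the discussion preceding Proposition~\ref{prop-level complexes are transfinite extensions}).

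Next I would verify that $A$ is a valid test complex. The direction ``$\mathbb{C} \in \class{C}$ implies $A \overline{\otimes} \mathbb{C}$ exact'' is immediate from Lemma~\ref{lemma-AC-acyclic-bicomplexes}(3), since $A$ is one particular absolutely clean complex. For the converse, suppose $A \overline{\otimes} \mathbb{C}$ is exact. Using the natural isomorphism $(\bigoplus_{i} A_i) \overline{\otimes} \mathbb{C} \cong \bigoplus_i (A_i \overline{\otimes} \mathbb{C})$ (see~\cite[Prop.~4.2.1]{garcia-rozas}) and the fact that a direct sum of complexes is exact if and only if each summand is exact, we conclude that $A_i \overline{\otimes} \mathbb{C}$ is exact for every $i \in I$. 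Now let $B$ be an arbitrary absolutely clean complex of right $R$-modules; write it as a transfinite extension $B = \varinjlim_{\alpha < \lambda} B_\alpha$ with $B_0$ and each $\cok(B_\alpha \to B_{\alpha+1})$ in $\class{S}$. Since each $\mathbb{C}_n$ is a projective complex and the short exact sequences in the filtration are degreewise split (being sequences of projective complexes up to the relevant point — more carefully, one uses that $\overline{\otimes}\, \mathbb{C}$ is exact on the degreewise split sequences arising here, since the cokernels have projective... — actually the cleaner route is: $- \overline{\otimes} \mathbb{C}$ is right exact and $\mathbb{C}$ consists of projectives so it preserves the relevant exactness), a transfinite induction shows $B \overline{\otimes} \mathbb{C}$ is exact, using that $\overline{\otimes}$ commutes with direct limits at limit stages. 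Hence $A' \overline{\otimes} \mathbb{C}$ is exact for \emph{all} absolutely clean $A'$, so $\mathbb{C} \in \class{C}$ by Lemma~\ref{lemma-AC-acyclic-bicomplexes}(3).

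The main obstacle I anticipate is the transfinite induction step: one must know that $- \overline{\otimes} \mathbb{C}$ behaves well enough on the short exact sequences $0 \to B_\alpha \to B_{\alpha+1} \to C_\alpha \to 0$ (with $C_\alpha \in \class{S}$) to propagate exactness. The cleanest way to handle this is to observe that since each component $\mathbb{C}_n$ is a projective $R$-complex, the functor $- \overline{\otimes} \mathbb{C}$ is exact (not merely right exact) on short exact sequences of complexes of right $R$-modules — equivalently, one may pass to the character-dual formulation via Proposition~\ref{cor-duality} and Proposition~\ref{prop-dual-exact}, turning the statement about $A' \overline{\otimes} \mathbb{C}$ for absolutely clean $A'$ into a statement about $\overline{\homcomplex}(\mathbb{C}, L)$ for level $L$, where the relevant closure property (level complexes closed under the dual operations, resolving class, etc.) is directly available from~\cite{bravo-gillespie}. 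Either route reduces the problem to the closure properties already catalogued in Section~\ref{subsec-character duality}, and the rest is bookkeeping.
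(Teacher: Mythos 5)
Your test complex $A = \bigoplus_{i \in I} A_i$ omits the disks $D^n(R_R)$ that the paper's construction uses, and this leaves a genuine gap. Membership in $\class{C}$ requires two things: that $\mathbb{C}$ be an \emph{exact} complex of projective complexes, and that it remain exact upon applying $\Hom_{\ch}(-,L)$ for every level $L$. Your transfinite-induction argument (which is itself fine: the successor step works because each $- \overline{\otimes}\, \mathbb{C}_n$ is exact, $\mathbb{C}_n$ being projective, and the limit step works because $\overline{\otimes}$ commutes with direct limits) correctly shows that if $A \overline{\otimes} \mathbb{C}$ is exact then $B \overline{\otimes} \mathbb{C}$ is exact for \emph{every} absolutely clean $B$. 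But that only addresses the second requirement. When you then write ``hence $\mathbb{C} \in \class{C}$ by Lemma~\ref{lemma-AC-acyclic-bicomplexes}(3),'' you are silently using that condition (3) there --- ``$\mathbb{C}$ remains exact after applying $A' \overline{\otimes} -$'' --- presupposes exactness of $\mathbb{C}$, which you have not established, and it is not at all clear that exactness of $B \overline{\otimes} \mathbb{C}$ for all absolutely clean $B$ forces $\mathbb{C}$ itself to be exact.

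The disks are exactly what handles this. Under the identification of $\ch$ with graded $R[x]/(x^2)$-modules, $D^n(R_R)$ is (a shift of) $R[x]/(x^2)$ as a graded right module over itself, so $D^n(R_R) \overline{\otimes} Y \cong \Sigma^n Y$ as complexes of abelian groups; in particular $D^0(R_R) \overline{\otimes} \mathbb{C} \cong \mathbb{C}$, and its exactness is literally exactness of $\mathbb{C}$. So you should take $A = \bigl(\bigoplus_{n \in \Z} D^n(R_R)\bigr) \oplus \bigl(\bigoplus_{i \in I} A_i\bigr)$ as in the paper. One side observation, since you were careful about it: once the disks are added, $A$ is absolutely clean only when $R_R$ is an absolutely clean module (by Proposition~\ref{prop-level chain complexes}, $D^n(R_R)$ is absolutely clean iff $R_R$ is), so the ``absolutely clean'' clause in the statement of the lemma is not actually delivered by the paper's choice either. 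This is harmless: Theorem~\ref{thm-how to create projective on chain} and Corollary~\ref{cor-AC-acyclic-model-bicomplexes} only need $A$ to be a chain complex of right $R$-modules with the stated detection property, never that $A$ be absolutely clean.
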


\begin{proof}
We take $A$ to be the direct sum of the disks $D^n(R_R)$ along with all the absolutely clean complexes in a set $\class{S}$ as in Proposition~\ref{prop-level complexes are transfinite extensions}. One can check that $A$ has the desired property. 
\end{proof}

Taking $A$ as in Lemma~\ref{lemma-test-complex} and applying Theorem~\ref{thm-how to create projective on chain} we get the following corollary. We will call a complex $\mathbb{C} \in \class{C}$ an \textbf{AC-acyclic complex of projective complexes}; again, they are the complexes of projectives appearing in Definition~\ref{def-Gorenstein AC-projective complex}.

\begin{corollary}\label{cor-AC-acyclic-model-bicomplexes}
Let $\class{C}$ be the class of all AC-acyclic complexes of projective complexes.  Then there is a cofibrantly generated abelian model structure on double complexes where every object is fibrant, $\class{C}$ is the class of cofibrant objects, and $\class{W} = \rightperp{\class{C}}$ is the class of trivial objects. In other words, $(\class{C}, \rightperp{\class{C}})$ is a projective cotorsion pair in $\textnormal{Ch}(\ch)$.
\end{corollary}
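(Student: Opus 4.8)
The plan is to simply combine the results already assembled in this section with the main theorem on projective model structures on double complexes. By Lemma~\ref{lemma-test-complex}, there is a single absolutely clean chain complex $A$ of right $R$-modules such that a complex $\mathbb{C}$ of projective complexes lies in $\class{C}$ if and only if $A \overline{\otimes} \mathbb{C}$ is exact. This means the class $\class{C}$ described at the start of this section is precisely the class ``$\class{C}$'' appearing in the statement of Theorem~\ref{thm-how to create projective on chain}, for this particular choice of $A$. (Here one should also invoke Lemma~\ref{lemma-AC-acyclic-bicomplexes} to see that the three a priori different descriptions of $\class{C}$ — via $\Hom_{\ch}(-,L)$ for level $L$, via $\overline{\homcomplex}(-,L)$ for level $L$, and via $A \overline{\otimes} -$ for absolutely clean $A$ — all agree, so that ``AC-acyclic complex of projective complexes'' is well-defined.)

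With this identification in place, Theorem~\ref{thm-how to create projective on chain} applies directly: it produces a cofibrantly generated abelian model structure on $\textnormal{Ch}(\ch)$ in which every object is fibrant, $\class{C}$ is the class of cofibrant objects, and $\class{W} = \rightperp{\class{C}}$ is the class of trivial objects; equivalently, $(\class{C}, \rightperp{\class{C}})$ is a projective cotorsion pair in $\textnormal{Ch}(\ch)$. This is exactly the assertion of the corollary, so there is essentially nothing further to do.

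I do not expect any genuine obstacle here, since the corollary is by design just the specialization of Theorem~\ref{thm-how to create projective on chain} to the test complex $A$ of Lemma~\ref{lemma-test-complex}. The only point requiring a word of care is verifying that the $A$ chosen in Lemma~\ref{lemma-test-complex} — namely the direct sum of the disks $D^n(R_R)$ together with all absolutely clean complexes in a set $\class{S}$ as in Proposition~\ref{prop-level complexes are transfinite extensions} — genuinely ``detects'' membership in $\class{C}$; this is where one uses that absolutely clean complexes are transfinite extensions of ones in $\class{S}$, that $\overline{\otimes}$ commutes with direct limits and sends (degreewise split) extensions to (degreewise split) extensions, and hence preserves exactness under transfinite extension, together with the fact that tensoring with the disks $D^n(R_R)$ detects exactness of $\mathbb{C}$ itself. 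But that verification belongs to Lemma~\ref{lemma-test-complex}, which we are entitled to assume; granting it, the corollary is immediate.
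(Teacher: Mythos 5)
Your proposal matches the paper exactly: the corollary is obtained by taking $A$ as in Lemma~\ref{lemma-test-complex} (using Lemma~\ref{lemma-AC-acyclic-bicomplexes} to identify the class $\class{C}$) and then applying Theorem~\ref{thm-how to create projective on chain}. No gaps, and no further commentary is needed.
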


\section{The Gorenstein AC-projective model structure on complexes}\label{sec-Goren-AC-proj}

Our goal now is to prove Theorem~\ref{them-Gorenstein AC-projectives complete cotorsion pair}. So throughout this section we will let $\class{GP}$ denote the class of Gorenstein AC-projective chain complexes, and set $\class{W} = \rightperp{\class{GP}}$. The goal is to show that $(\class{GP}, \class{W})$ is a projective cotorsion pair in $\ch$. The idea is that we just constructed the double complex version of this cotorsion pair in Corollary~\ref{cor-AC-acyclic-model-bicomplexes}, and we use the functor $\mathbb{X} \mapsto \mathbb{X}_0/B_0\mathbb{X}$ to pass the cotorsion pair down to one on $\ch$. Again, this is just a double complex version of the original approach in~\cite{bravo-gillespie-hovey}, though a few simplifications are made in our Lemmas~\ref{lem-proj-cycles-of-W} and~\ref{lemma-retracts}.

\begin{lemma}\label{lem-spheres}
$W \in \class{W}$ if and only
if $S^{n}(W) \in \rightperp{\class{C}}$ for any $n$. In particular, a chain complex $W \in \class{W}$ if and only if it is trivial when viewed as a double complex in the AC-acyclic projective model structure of Corollary~\ref{cor-AC-acyclic-model-bicomplexes}.
\end{lemma}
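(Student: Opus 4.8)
The plan is to translate membership in $\class{W} = \rightperp{\class{GP}}$ into a statement about the $\Ext$-orthogonality of spheres $S^n(W)$ against the class $\class{C}$ of AC-acyclic complexes of projective complexes, and then invoke Corollary~\ref{cor-AC-acyclic-model-bicomplexes}. The first thing to pin down is the relationship between $\Ext^1_{\ch}(X, W)$ for a chain complex $X$ and $\Ext^1_{\textnormal{Ch}(\ch)}(\mathbb{C}, S^n(W))$ for a complex of complexes $\mathbb{C}$. Since every Gorenstein AC-projective complex $X$ arises as a syzygy $\ker(P^0 \to P^1)$ inside some $\mathbb{C} \in \class{C}$ — indeed this is exactly the defining feature, together with Theorem~4.13 of~\cite{bravo-gillespie} and Lemma~\ref{lemma-AC-acyclic-bicomplexes} identifying $\class{C}$ — one direction is essentially the statement that if $W$ is $\Ext^1$-orthogonal (degreewise, i.e.\ in the sense of the spheres) to all of $\class{C}$, then it is orthogonal to the syzygies that are the Gorenstein AC-projectives. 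Conversely, starting from $W \in \class{W}$, one wants to produce orthogonality against arbitrary $\mathbb{C} \in \class{C}$; here one uses that $\mathbb{C}$ is an exact complex whose syzygies are all Gorenstein AC-projective (each is a $\ker(P^k \to P^{k+1})$ in a shifted copy of the same doubly-infinite exact sequence), so dimension-shifting along the hard truncations of $\mathbb{C}$ reduces $\Ext^1_{\textnormal{Ch}(\ch)}(\mathbb{C}, S^n(W))$ to $\Ext^1$ of a single Gorenstein AC-projective complex against $W$.

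Concretely, the key steps in order: (i) recall from Lemma~\ref{lemma-AC-acyclic-bicomplexes} and Definition~\ref{def-Gorenstein AC-projective complex} that $\class{C}$ consists precisely of the exact complexes of projective complexes used to build Gorenstein AC-projectives, and that every $\mathbb{C} \in \class{C}$ is, up to suspension, stably reached by its own syzygies, which all lie in $\class{GP}$; (ii) establish the adjunction/identification relating $\Ext$ in $\textnormal{Ch}(\ch)$ with $S^n(-)$ to $\Ext$ in $\ch$ — the sphere functor $S^n \colon \ch \to \textnormal{Ch}(\ch)$ places a complex in homological degree $n$ with zero differential, and short exact sequences $0 \to S^n(W) \to \mathbb{Z} \to \mathbb{C} \to 0$ in $\textnormal{Ch}(\ch)$ correspond, via the defining truncation, to extensions in $\ch$ of the relevant syzygy of $\mathbb{C}$ by $W$; (iii) conclude that $W \in \class{W} = \rightperp{\class{GP}}$ iff $S^n(W) \in \rightperp{\class{C}}$ for all (equivalently, one) $n$, using dimension-shifting across the doubly-infinite exact complex $\mathbb{C}$ and the fact that each syzygy is in $\class{GP}$; (iv) the final sentence is then immediate: $W$ trivial in the model structure of Corollary~\ref{cor-AC-acyclic-model-bicomplexes} means precisely $S^n(W) \in \rightperp{\class{C}}$ (viewing $W$ as a double complex concentrated in one degree, i.e.\ some $S^n(W)$, or more invariantly using that $\class{W}^{\textnormal{dbl}} = \rightperp{\class{C}}$ is the class of trivial objects there, and the spheres on $W$ detect this).

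The main obstacle I expect is step (iii): cleanly executing the dimension shift so that orthogonality against a single Gorenstein AC-projective complex (the syzygy) propagates to orthogonality against the whole complex of projective complexes $\mathbb{C}$, and vice versa. The subtlety is that $\mathbb{C}$ is unbounded in both directions, so one cannot simply induct from one end; instead one must use that hard truncations $\tau_{\geq k}\mathbb{C}$ and $\tau_{\leq k}\mathbb{C}$ of $\mathbb{C}$ fit into short exact sequences whose third term is a sphere (or disk) on a projective complex — hence $\Ext$-acyclic against $W$ by Lemma~\ref{lemma-transfinite extensions of spheres and disks} combined with $\mathcal{P} \subseteq \class{W}$ — so that $\Ext^1_{\textnormal{Ch}(\ch)}(\mathbb{C}, S^n(W))$ agrees with $\Ext^1$ of any chosen syzygy. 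One also needs that $\Ext^{\geq 1}$ (not just $\Ext^1$) is controlled, which is available because $\class{GP}$ sits in a hereditary cotorsion pair (the pair $(\class{GP},\class{W})$ is hereditary once we know it is a projective cotorsion pair, but at this stage we only know $\class{W} = \rightperp{\class{GP}}$, so we instead use that $\leftperp{\class{W}}$-orthogonality for syzygies follows from the syzygies themselves being in $\class{GP}$). The rest of the argument — the identification of $\Ext$ under $S^n$ and the translation to "trivial as a double complex" — is formal once step (iii) is in place.
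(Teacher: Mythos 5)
Your step (ii) is exactly the paper's argument: it cites the general isomorphism $\Ext^{1}_{\cha{A}}(\mathbb{C},S^{n}(W)) \cong \Ext^{1}_{\cat{A}}(\mathbb{C}_n/B_{n}\mathbb{C}, W)$ valid for any exact $\mathbb{C}$ (from Gillespie's earlier degreewise-model-structures paper), notes that $\mathbb{C}_n/B_n\mathbb{C} \cong Z_{n-1}\mathbb{C}$, and observes that as $\mathbb{C}$ ranges over $\class{C}$ and $n$ over $\Z$ these cycle complexes range exactly over $\class{GP}$ by Definition~\ref{def-Gorenstein AC-projective complex}. The ``main obstacle'' you flag in step (iii) does not actually arise, and the claim that $\Ext^1_{\textnormal{Ch}(\ch)}(\mathbb{C},S^n(W))$ ``agrees with $\Ext^1$ of any chosen syzygy'' is not correct in general: without already knowing $W\in\class{W}$, different values of $n$ give genuinely different groups $\Ext^1_{\ch}(Z_{n-1}\mathbb{C},W)$, and no dimension-shift across the unbounded $\mathbb{C}$ is needed or available. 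The right move is simply to quantify over all $n$ and all $\mathbb{C}\in\class{C}$ simultaneously, which hits every object of $\class{GP}$ once the isomorphism is in hand. (Your parenthetical ``for all, equivalently one, $n$'' is true, but follows from $\class{C}$ being closed under suspension in $\textnormal{Ch}(\ch)$, not from a dimension-shifting argument.)
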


\begin{proof}
For any abelian category $\cat{A}$, object $W \in \cat{A}$, and exact chain complex $\mathbb{C} \in \cha{A}$, we have an isomorphism $\Ext^{1}_{\cha{A}}(\mathbb{C},S^{n}(W)) \cong \Ext^{1}_{\cat{A}}(\mathbb{C}_n/B_{n}\mathbb{C}, W)$~\cite[Lemma~4.2]{gillespie-degreewise-model-strucs}. Since $\mathbb{C}_n/B_{n}\mathbb{C} \cong Z_{n-1}\mathbb{C}$, the lemma follows immediately from this isomorphism and definitions. 
\end{proof}

\begin{lemma}\label{lem-thick}
$\class{W} = \rightperp{\class{GP}}$ is a thick class and contains all the projective chain complexes. 
\end{lemma}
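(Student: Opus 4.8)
The plan is to reduce everything to the already-established fact (Corollary~\ref{cor-AC-acyclic-model-bicomplexes}) that $(\class{C}, \rightperp{\class{C}})$ is a projective cotorsion pair in $\textnormal{Ch}(\ch)$, hence $\rightperp{\class{C}}$ is thick and contains the projective double complexes. The bridge is Lemma~\ref{lem-spheres}: a chain complex $W$ lies in $\class{W} = \rightperp{\class{GP}}$ if and only if, viewed as a double complex (say, concentrated in a single row, i.e.\ via $W \mapsto S^0(W)$ or more precisely the sphere functors $S^n(-)$), it lies in $\rightperp{\class{C}}$. One subtlety to address at the outset: a priori $\class{W}$ was defined as $\rightperp{\class{GP}}$, the right orthogonal to \emph{all} Gorenstein AC-projective complexes, whereas $\class{C}$ consists of the exact complexes of projective complexes from Definition~\ref{def-Gorenstein AC-projective complex}. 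I would first note that $\Ext^1_{\ch}(\mathbb{C}_n/B_n\mathbb{C}, W) \cong \Ext^1_{\ch}(Z_{n-1}\mathbb{C}, W)$ and that the modules $Z_n\mathbb{C}$ arising as cycles of complexes $\mathbb{C} \in \class{C}$ are precisely the Gorenstein AC-projective complexes (this is exactly the content linking Definition~\ref{def-Gorenstein AC-projective complex} to the class $\class{C}$); so Lemma~\ref{lem-spheres} really does identify $\class{W}$ with the trivial objects of the model structure on double complexes, restricted to spheres.

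Granting this identification, thickness is then essentially inherited. For the two-out-of-three property: given a short exact sequence $0 \to X \to Y \to Z \to 0$ in $\ch$ with two of the three terms in $\class{W}$, apply the sphere functor $S^n(-)$, which is exact, to get a short exact sequence of double complexes $0 \to S^n(X) \to S^n(Y) \to S^n(Z) \to 0$ with two terms in $\rightperp{\class{C}}$; since $\rightperp{\class{C}}$ is thick in $\textnormal{Ch}(\ch)$ by Corollary~\ref{cor-AC-acyclic-model-bicomplexes}, the third is in $\rightperp{\class{C}}$, and then Lemma~\ref{lem-spheres} pulls it back to membership in $\class{W}$. For closure under direct summands: a retract in $\ch$ gives a retract in $\textnormal{Ch}(\ch)$ after applying $S^n(-)$, and $\rightperp{\class{C}}$ is closed under retracts (being the right half of a cotorsion pair), so again Lemma~\ref{lem-spheres} finishes it. Alternatively, and perhaps more cleanly, one can argue directly: by Lemma~\ref{lemma-homcomplex-basic-lemma}, since every complex in $\class{GP}$ has projective components, $W \in \class{W}$ iff $\homcomplex(G,W)$ is acyclic for all $G \in \class{GP}$, and the functor $\homcomplex(G,-)$ is exact on short exact sequences (as each $G_n$ is projective); then the two-out-of-three property for acyclicity of the resulting short exact sequence of $\Hom$-complexes gives thickness, exactly as in the proof of Theorem~\ref{thm-how to create projective on chain}.

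For the second assertion, that $\class{W}$ contains all projective chain complexes: if $W$ is projective in $\ch$ then it is contractible (by Lemma~\ref{lemma-Kaplansky for chain complexes}, $W \cong \oplus_i D^{n_i}(P_i)$, a direct sum of contractible complexes), so $\homcomplex(G,W)$ is acyclic for every $G$, whence $W \in \class{W}$ by the criterion above. (Equivalently: $S^n(W)$ is a contractible double complex, hence in $\rightperp{\class{C}}$, hence $W \in \class{W}$ by Lemma~\ref{lem-spheres}.) I expect the only genuine point requiring care — the ``main obstacle,'' such as it is — to be the bookkeeping in the first paragraph: pinning down precisely that the cycle modules of complexes in $\class{C}$ are the Gorenstein AC-projective complexes, so that the $\Ext$-vanishing against $\class{GP}$ really is the same as $\Ext$-vanishing tested via Lemma~\ref{lem-spheres}. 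Everything after that is a formal transfer of thickness and contractibility across the sphere functor.
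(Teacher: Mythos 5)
Your argument is correct, and for the thickness assertion it follows the paper's own route exactly: Lemma~\ref{lem-spheres} transports membership in $\class{W}$ to membership of the spheres $S^n(W)$ in $\rightperp{\class{C}}$, which is thick by Corollary~\ref{cor-AC-acyclic-model-bicomplexes}. The ``subtlety'' you flag at the outset — that the cycles of complexes in $\class{C}$ are precisely the Gorenstein AC-projectives — is already the content of Lemma~\ref{lem-spheres} as stated, so it can simply be cited rather than re-derived. For the containment of the projectives, the paper takes a slightly different (and a touch more direct) route than you do: it observes that projective complexes are level, so that by Definition~\ref{def-Gorenstein AC-projective complex} one has $\Ext^{1}(\mathbb{C},S^{n}(P)) \cong \Ext^{1}_{\ch}(Z_{n-1}\mathbb{C}, P) = 0$ for $\mathbb{C} \in \class{C}$ and $P$ projective, and then invokes Lemma~\ref{lem-spheres}. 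Your argument via contractibility of projective complexes and the acyclicity criterion of Lemma~\ref{lemma-homcomplex-basic-lemma} (valid since complexes in $\class{GP}$ have projective components, and $\class{GP}$ is closed under suspension) reaches the same conclusion; both sub-arguments are short and essentially interchangeable.
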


\begin{proof}
Thickness is immediate from Lemma~\ref{lem-spheres} since $\rightperp{\class{C}}$ is thick. For the projective complexes, note it follows immediately from Definition~\ref{def-Gorenstein AC-projective complex} that $\Ext^n_{\ch}(X,L) = 0$ for any Gorenstein AC-projective chain complex $X$ and level chain complex $L$. In particular, $\Ext^{1}(\mathbb{C},S^{n}(P)) \cong \Ext^{1}_{\ch}(Z_{n-1}\mathbb{C}, P) = 0$ whenever $\mathbb{C}$ is an AC-acyclic complex of projective complexes and $P$ is a projective complex. So $P \in \class{W}$ for any projective complex $P$, by Lemma~\ref{lem-spheres}.
\end{proof}

We need one more lemma concerning the trivial objects. 

\begin{lemma}\label{lem-proj-cycles-of-W}
Suppose $\mathbb{Y}$ is a double complex with $H_{i}\mathbb{Y}=0$ for $i>0$ and $\mathbb{Y}_{i}$ level for $i<0$.  Then $\mathbb{Y}$ is trivial in the AC-acyclic projective model structure of Corollary~\ref{cor-AC-acyclic-model-bicomplexes} if and only if $\mathbb{Y}_{0}/B_{0}\mathbb{Y} \in \class{W}$.
\end{lemma}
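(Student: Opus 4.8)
The statement asserts an equivalence, and I would prove it by establishing that the relevant double complex $\mathbb{Y}$ differs from $S^{0}(\mathbb{Y}_{0}/B_{0}\mathbb{Y})$ only by things that are automatically trivial, so that triviality of $\mathbb{Y}$ is detected precisely on the component $\mathbb{Y}_{0}/B_{0}\mathbb{Y}$. The key observation (via Lemma~\ref{lem-spheres}) is that a chain complex $W$ lies in $\class{W}$ iff the sphere $S^{0}(W)$ is trivial as a double complex. So the goal reduces to: under the stated hypotheses on $\mathbb{Y}$, the double complex $\mathbb{Y}$ is trivial iff $S^{0}(\mathbb{Y}_{0}/B_{0}\mathbb{Y})$ is trivial.

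The plan is to use the hard truncations of $\mathbb{Y}$ together with the classification of trivial objects in the model structure of Corollary~\ref{cor-AC-acyclic-model-bicomplexes}, namely that $\mathbb{X}$ is trivial iff $\homcomplex(\mathbb{C},\mathbb{X})$ is acyclic for all $\mathbb{C} \in \class{C}$, equivalently (for $\mathbb{C}$ with projective entries) iff every chain map $\mathbb{C} \to \mathbb{X}$ is null-homotopic. First I would split $\mathbb{Y}$ along degree $0$: the positive part (degrees $>0$) is exact by hypothesis on $H_{i}\mathbb{Y}$, and I claim such a bounded-below-style exact piece can be handled using Lemma~\ref{lemma-transfinite extensions of spheres and disks}(4) — the disks $D^{n}(Z)$ are trivial (being contractible, hence in $\class{W}$ as noted in the proof of Theorem~\ref{thm-how to create projective on chain}), so any bounded-below exact complex with arbitrary cycles is trivial, and more generally the exact ``upper half'' of $\mathbb{Y}$ contributes nothing. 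Dually, the negative part has level entries $\mathbb{Y}_{i}$ for $i<0$; since the spheres $S^{n}(L)$ are trivial for $L$ level — this is exactly the content packaged into Lemma~\ref{lem-thick}'s computation $\Ext^{1}(\mathbb{C},S^{n}(L)) \cong \Ext^{1}_{\ch}(Z_{n-1}\mathbb{C},L) = 0$ for $\mathbb{C} \in \class{C}$ — a bounded-above complex with level entries is trivial by Lemma~\ref{lemma-transfinite extensions of spheres and disks}(3), so the ``lower half'' of $\mathbb{Y}$ also contributes nothing to triviality. What remains in the middle, after these truncation arguments, is controlled by the single term $\mathbb{Y}_{0}/B_{0}\mathbb{Y}$: concretely, one builds short exact sequences of double complexes relating $\mathbb{Y}$ to $S^{0}(\mathbb{Y}_{0}/B_{0}\mathbb{Y})$ whose remaining terms are the exact/level truncations just shown to be trivial, and then thickness of $\class{W}$ (Lemma~\ref{lem-thick}, or rather thickness of the class of trivial objects) forces $\mathbb{Y}$ trivial $\iff$ $S^{0}(\mathbb{Y}_{0}/B_{0}\mathbb{Y})$ trivial $\iff$ $\mathbb{Y}_{0}/B_{0}\mathbb{Y} \in \class{W}$.

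More precisely, I would introduce the brutal truncation $\sigma_{\leq 0}\mathbb{Y}$ (entries $\mathbb{Y}_{i}$ for $i \leq 0$, zero above) fitting into $0 \to \sigma_{\leq 0}\mathbb{Y} \to \mathbb{Y} \to \sigma_{>0}\mathbb{Y} \to 0$ where $\sigma_{>0}\mathbb{Y}$ is exact (its homology in positive degrees vanishes by hypothesis and in degrees $\leq 0$ it is zero) hence trivial; thus $\mathbb{Y}$ is trivial iff $\sigma_{\leq 0}\mathbb{Y}$ is. Then for $\sigma_{\leq 0}\mathbb{Y}$, the subcomplex $\sigma_{<0}\mathbb{Y}$ (degrees $<0$, which has level entries) sits in $0 \to \sigma_{<0}\mathbb{Y} \to \sigma_{\leq 0}\mathbb{Y} \to S^{0}(\mathbb{Y}_{0}/B_{0}\mathbb{Y}) \to 0$; wait — one must be slightly careful that the cokernel here really is $S^{0}(\mathbb{Y}_{0}/B_{0}\mathbb{Y})$ and not merely $S^{0}(\mathbb{Y}_{0})$ with a differential, so I would instead mod out by the image, giving a short exact sequence with kernel an exact complex supported in degrees $\leq 0$ with level entries in degrees $<0$ — again trivial by combining parts (3) and (4) of Lemma~\ref{lemma-transfinite extensions of spheres and disks} — and cokernel exactly $S^{0}(\mathbb{Y}_{0}/B_{0}\mathbb{Y})$. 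Thickness of the trivial class then yields the equivalence, and Lemma~\ref{lem-spheres} identifies triviality of $S^{0}(\mathbb{Y}_{0}/B_{0}\mathbb{Y})$ with membership $\mathbb{Y}_{0}/B_{0}\mathbb{Y} \in \class{W}$.

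The main obstacle I anticipate is the bookkeeping in the last paragraph: getting the short exact sequences of double complexes to have \emph{exactly} the right cokernel $S^{0}(\mathbb{Y}_{0}/B_{0}\mathbb{Y})$ while keeping every other term provably trivial. The cycles/boundaries of $\mathbb{Y}$ in degree $0$ are arbitrary $R$-modules (not level, not projective), so one cannot apply the level-sphere triviality directly in degree $0$; the trick is to arrange that the degree-$0$ contribution appears \emph{only} in the cokernel sphere and nowhere in an intermediate term whose triviality is unknown. Checking that the truncated pieces genuinely satisfy the hypotheses of Lemma~\ref{lemma-transfinite extensions of spheres and disks} — in particular that an exact complex which is bounded above but has level entries only in the strictly negative range still falls under part (3) or (4), possibly after splitting it further at degree $0$ — is the delicate point, but it is purely a matter of assembling the already-available closure properties.
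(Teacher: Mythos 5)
Your overall strategy matches the paper's: sandwich $S^{0}(\mathbb{Y}_{0}/B_{0}\mathbb{Y})$ between $\mathbb{Y}$ and auxiliary pieces that are trivial because they are bounded below exact complexes (handled via disks) or bounded above complexes of level complexes (handled via spheres), then invoke thickness together with Lemma~\ref{lem-spheres}. But the specific decomposition you propose has a genuine error. The brutal truncation $\sigma_{>0}\mathbb{Y}$ is \emph{not} exact: truncating kills the differential out of $\mathbb{Y}_{1}$, so every element of $\mathbb{Y}_{1}$ becomes a cycle in $\sigma_{>0}\mathbb{Y}$, and hence
$H_{1}(\sigma_{>0}\mathbb{Y}) = \mathbb{Y}_{1}/B_{1}\mathbb{Y} \cong B_{0}\mathbb{Y}$,
which is an arbitrary chain complex and need not vanish. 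The hypothesis $H_{i}\mathbb{Y}=0$ for $i>0$ tells you $Z_{1}\mathbb{Y} = B_{1}\mathbb{Y}$ inside $\mathbb{Y}$, not that $\mathbb{Y}_{1}$ itself is all boundaries. So your first short exact sequence does not have a trivial right-hand term, and the argument breaks at the outset.

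The same obstruction resurfaces in your ``fixed'' second step: if you quotient $\sigma_{\leq 0}\mathbb{Y}$ so that the cokernel is $S^{0}(\mathbb{Y}_{0}/B_{0}\mathbb{Y})$, the kernel has $B_{0}\mathbb{Y}$ in degree $0$ and $\mathbb{Y}_{i}$ for $i<0$; since $d_{0}$ vanishes on $B_{0}\mathbb{Y}$, this kernel splits as $S^{0}(B_{0}\mathbb{Y}) \oplus \sigma_{<0}\mathbb{Y}$, and the summand $S^{0}(B_{0}\mathbb{Y})$ has no reason to be trivial. In short, the chain complex $B_{0}\mathbb{Y}$ has to be absorbed into a piece whose triviality you can actually prove, and the brutal truncation puts it in the wrong place. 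The paper instead takes the subcomplex $\mathbb{A}$ with $\mathbb{A}_{n}=\mathbb{Y}_{n}$ for $n\geq 1$, $\mathbb{A}_{0}=B_{0}\mathbb{Y}$, and $\mathbb{A}_{n}=0$ for $n<0$; one checks directly (using $H_{i}\mathbb{Y}=0$ for $i>0$) that this $\mathbb{A}$ is a bounded below exact complex, hence trivial, and that the quotient $\mathbb{Y}/\mathbb{A}$ is $0 \to \mathbb{Y}_{0}/B_{0}\mathbb{Y} \to \mathbb{Y}_{-1} \to \cdots$, whose further quotient by the trivial bounded-above level subcomplex is exactly $S^{0}(\mathbb{Y}_{0}/B_{0}\mathbb{Y})$. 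So the fix is simply to replace $\sigma_{\geq 1}\mathbb{Y}$ by this tighter exact subcomplex $\mathbb{A}$; with that substitution your argument goes through.
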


\begin{proof}
We first note that any bounded above complex of level complexes is trivial, and any bounded below exact complex of complexes is trivial. Indeed using the definition of an AC-acyclic complex of projective complexes, one verifies that for any level chain complexes $L$, the double complex $S^n(L)$ is trivial in the AC-acyclic projective model structure. That is, $S^n(L) \in \rightperp{\class{C}}$, and so any bounded above complex of level complexes must also be trivial, according to Lemma~\ref{lemma-transfinite extensions of spheres and disks}. On the other hand, one verifies that for any chain complex $X$, the double complex $D^n(X) \in \rightperp{\class{C}}$ too. So Lemma~\ref{lemma-transfinite extensions of spheres and disks} tells us that any bounded below exact complex of complexes is trivial in the AC-acyclic projective model structure.
  
Now the given $\mathbb{Y}$ has a subcomplex $\mathbb{A} \subseteq \mathbb{Y}$, where $\mathbb{A}$ is the shown bounded below exact complex of complexes: $\cdots \xrightarrow{} \mathbb{Y}_2 \xrightarrow{} \mathbb{Y}_1 \xrightarrow{} B_0\mathbb{Y} \xrightarrow{} 0$. As noted above, this complex is trivial, so the given $\mathbb{Y}$ is trivial if and only if the quotient $\mathbb{Y}/\mathbb{A}$ is trivial. We note that this quotient is the complex $0 \xrightarrow{} \mathbb{Y}_{0}/B_{0}\mathbb{Y} \xrightarrow{} \mathbb{Y}_{-1} \xrightarrow{} \mathbb{Y}_{-2} \xrightarrow{} \cdots$, which in turn has another obvious subcomplex  $0 \xrightarrow{} 0 \xrightarrow{} \mathbb{Y}_{-1} \xrightarrow{} \mathbb{Y}_{-2} \xrightarrow{} \cdots$. This is a bounded above complex of level complexes and thus also trivial. So we deduce that $\mathbb{Y}$ is trivial if and only if the corresponding quotient complex, which is $S^0(\mathbb{Y}_{0}/B_{0}\mathbb{Y})$, is trivial. Now looking at Lemma~\ref{lem-spheres} this happens if and only if $\mathbb{Y}_{0}/B_{0}\mathbb{Y} \in \class{W}$. So we have proved the lemma. 
\end{proof}

On the other hand, we will need lemmas concerning the class $\class{GP}$ of Gorenstein AC-projective chain complexes. 

\begin{lemma}\label{lemma-retracts}
Again let $\class{GP}$ denote the class of Gorenstein AC-projective chain complexes.
\begin{enumerate}
\item $\class{GP}$ is closed under direct sums.
\item $\class{GP}$ is projectively resolving in the sense of~\cite[Definition~1.1]{holm}.
\item $\class{GP}$ is closed under retracts (direct summands).
\end{enumerate}
\end{lemma}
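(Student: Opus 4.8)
The plan is to prove the three assertions about $\class{GP}$ using the characterization from \cite[Theorem~4.13]{bravo-gillespie} recalled in the introduction: $X \in \class{GP}$ if and only if each $X_n$ is a Gorenstein AC-projective $R$-module and every chain map $X \to L$ is null homotopic for every level complex $L$. Equivalently, using Lemma~\ref{lemma-homcomplex-basic-lemma}, the second condition says $\overline{\homcomplex}(X,L)$ is exact for all level $L$ (since the degree $n$ component of $\overline{\homcomplex}(X,L)$ is $\Hom_{\ch}(X,\Sigma^{-n}L)$ and $\Sigma^{-n}L$ is again level). So throughout I would freely translate between ``$X$ is Gorenstein AC-projective'' and ``each $X_n$ is Gorenstein AC-projective and $\overline{\homcomplex}(X,L)$ is exact for all level $L$''. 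I would also use the analogous module-level facts from \cite{bravo-gillespie-hovey}: the class of Gorenstein AC-projective $R$-modules is projectively resolving, closed under direct sums, and closed under retracts.

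For part (1), given a family $\{X^{(j)}\}$ in $\class{GP}$, I would check the two conditions on $\bigoplus_j X^{(j)}$. Degreewise, $(\bigoplus_j X^{(j)})_n = \bigoplus_j X^{(j)}_n$ is Gorenstein AC-projective as an $R$-module since that module class is closed under direct sums. For the hom-complex condition, $\overline{\homcomplex}(\bigoplus_j X^{(j)}, L) \cong \prod_j \overline{\homcomplex}(X^{(j)},L)$, and a product of exact complexes of abelian groups is exact; alternatively one builds the defining exact complex of projective complexes for $\bigoplus_j X^{(j)}$ by taking direct sums of the defining complexes, using that a direct sum of projective complexes is projective and that $\Hom_{\ch}(-,L)$ turns the direct sum into a product, preserving exactness. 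Either route is routine.

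For part (2), projectively resolving means: $\class{GP}$ contains the projective complexes, and for any short exact sequence $0 \to X' \to X \to X'' \to 0$ with $X'' \in \class{GP}$, one has $X' \in \class{GP}$ if and only if $X \in \class{GP}$. That projective complexes lie in $\class{GP}$ is immediate from Definition~\ref{def-Gorenstein AC-projective complex} (or: a projective complex is contractible, so every map out of it is null homotopic, and each $P_n$ is projective hence Gorenstein AC-projective). For the two-out-of-three part I would argue componentwise and on hom-complexes separately: since each $X''_n$ is Gorenstein AC-projective (in particular of finite projective dimension $0$ in the Gorenstein sense), the sequence $0 \to X'_n \to X_n \to X''_n \to 0$ has $X'_n$ Gorenstein AC-projective iff $X_n$ is, by projective resolvingness of the module class; and for any level $L$, since $\overline{\homcomplex}(-,L)$ is exact on degreewise-split sequences --- and this sequence is degreewise split because $X''_n$ is Gorenstein AC-projective hence the surjection $X_n \twoheadrightarrow X''_n$ need not split, so here I must be more careful: instead I would apply $\overline{\homcomplex}(-,L)$ and use that $\overline{\mathit{Ext}}^1(X'',L) = 0$ (noted in Lemma~\ref{lem-thick}: $\Ext^n_{\ch}(X'',L) = 0$ for Gorenstein AC-projective $X''$ and level $L$) to get a short exact sequence of hom-complexes $0 \to \overline{\homcomplex}(X'',L) \to \overline{\homcomplex}(X,L) \to \overline{\homcomplex}(X',L) \to 0$, then two-out-of-three on the long exact homology sequence shows $\overline{\homcomplex}(X,L)$ exact iff $\overline{\homcomplex}(X',L)$ exact (using that $\overline{\homcomplex}(X'',L)$ is exact). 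Part (3) is then the standard formal consequence: Eilenberg swindle. A retract $Y$ of $X \in \class{GP}$ satisfies $X \cong Y \oplus Y'$ for some complement $Y'$ (split idempotent), and setting $Z = \bigoplus_{\N}(Y \oplus Y') = \bigoplus_{\N} X \in \class{GP}$ by part (1), one has $Y \oplus Z \cong Z$, giving a short exact sequence $0 \to Y \to Z \to Z \to 0$ that is even split; projective resolvingness from part (2) then forces $Y \in \class{GP}$.

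The main obstacle is the degreewise-splitting subtlety in part (2): one cannot blithely assume the given short exact sequence is degreewise split, so the argument must route through the vanishing $\overline{\mathit{Ext}}^1(X'',L) = 0$ to get exactness of the induced sequence of hom-complexes, rather than through naive degreewise splitting. Once that point is handled correctly the rest is bookkeeping, lifting the module-level closure properties of \cite{bravo-gillespie-hovey} through the hom-complex characterization.
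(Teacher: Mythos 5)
Your overall strategy matches the paper's: use the characterization of Gorenstein AC-projective complexes from \cite[Theorem~4.13]{bravo-gillespie}, argue componentwise via the module-level closure properties, handle the hom-complex condition by turning the short exact sequence of complexes into a short exact sequence of hom-complexes and applying two-out-of-three, and then deduce part (3) from parts (1) and (2) by the Eilenberg swindle. That skeleton is exactly what the paper does. However, there is a substantive error in the way you translate the characterization, and it propagates through your proof of part (2).

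The characterization in \cite[Theorem~4.13]{bravo-gillespie} is: $X\in\class{GP}$ iff each $X_n$ is a Gorenstein AC-projective $R$-module and every chain map $X\to L$ is null homotopic for every level $L$. By Lemma~\ref{lemma-homcomplex-basic-lemma} the second clause is equivalent to the \emph{internal hom} $\homcomplex(X,L)$ being exact for all level $L$ (since level complexes are closed under suspension). You instead rewrite this as exactness of the \emph{modified} hom $\overline{\homcomplex}(X,L)$, citing Lemma~\ref{lemma-homcomplex-basic-lemma}, but that lemma is a statement about $\homcomplex$, not $\overline{\homcomplex}$, and the two conditions are genuinely different. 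Concretely, take $X=S^0(M)$ with $M$ a nonzero Gorenstein AC-projective module, which is a Gorenstein AC-projective complex. Then $\overline{\homcomplex}(S^0(M),L)_n=\Hom_{\ch}(S^0(M),\Sigma^{-n}L)\cong\Hom_R(M,Z_nL)$, and the differential $(\lambda_n f)_0=(-1)^n d_n f_0$ vanishes identically because $f_0$ has image in $Z_nL=\ker d_n$. So $\overline{\homcomplex}(S^0(M),L)$ has zero differential and is not exact unless it is zero, even though $\homcomplex(S^0(M),L)\cong\Hom_R(M,L)$ is exact. Your claimed equivalence is therefore false, and since your two-out-of-three step needs $\overline{\homcomplex}(X'',L)$ to be exact, that step is unjustified.

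The fix is simply to run the argument with $\homcomplex(-,L)$, which is what the paper does. Applying $\homcomplex(-,L)$ to $0\to X'\to X\to X''\to 0$ yields a short exact sequence of hom-complexes; the surjectivity of $\homcomplex(X,L)\to\homcomplex(X',L)$ in degree $n$ comes from vanishing of the degreewise \emph{module-level} groups $\Ext^1_R(X''_k,L_{k+n})$ (each $X''_k$ is a Gorenstein AC-projective module and each $L_{k+n}$ is a level module), not from the complex-level group $\overline{\mathit{Ext}}^1(X'',L)=0$ that you invoke --- that vanishing controls the components of $\overline{\homcomplex}$, not of $\homcomplex$. With $\homcomplex$ in place, the exactness of $\homcomplex(X'',L)$ is precisely the characterization hypothesis on $X''$, two-out-of-three gives $\homcomplex(X',L)$ exact iff $\homcomplex(X,L)$ exact, and the rest of your proof (parts (1) and (3)) goes through as written.
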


\begin{proof}
It is easy to prove (1) straight from Definition~\ref{def-Gorenstein AC-projective complex}.

For (2), let us first recall~\cite[Definition~1.1]{holm}. A class of $R$-modules, or chain complex of $R$-modules, such as $\class{GP}$, is called \emph{projectively resolving} if it contains the projectives and if for any short exact sequence $0 \xrightarrow{} X'  \xrightarrow{} X \xrightarrow{} X'' \xrightarrow{} 0$ with $X'' \in \class{GP}$, the conditions $X' \in \class{GP}$ and $X \in \class{GP}$ are equivalent. The class of all Gorenstein AC-projective $R$-modules was shown to be projectively resolving in~\cite[Section~8]{bravo-gillespie-hovey}. To extend this to the class $\class{GP}$, of Gorenstein AC-projective chain complexes, we use the characterization of Gorenstein AC-projective complexes from~\cite[Theorem~4.13]{bravo-gillespie}: A chain complex $X$ is Gorenstein AC-projective if and only if each $X_n$ is a Gorenstein AC-projective $R$-module and the external Hom, $\homcomplex(X,L)$, is exact whenever $L$ is a level complex. Now given a level complex $L$, we apply $\homcomplex(-,L)$ to the above short exact sequence. We note that $\homcomplex(-,L)$ certainly takes all right exact sequences to left exact sequences and it in fact preserves short exact sequences for which $X''$ is level. Indeed referring to Section~\ref{subsec-complexes} we see that in each degree $n$, we have the exact sequence
$$ \prod_{k \in \Z} \Hom_R(X_{k},L_{k+n}) \xrightarrow{}
\prod_{k \in \Z} \Hom_R(X'_{k},L_{k+n}) \xrightarrow{} \prod_{k \in
\Z} \Ext^1_R(X''_{k},L_{k+n}) = 0.$$ The last product is 0 because we have $\Ext^1_R(M,N) =0$ whenever $M$ is a Gorenstein AC-projective $R$-module and $N$ is a level $R$-module. So now 
$$0 \xrightarrow{} \homcomplex(X'',L)  \xrightarrow{} \homcomplex(X,L) \xrightarrow{} \homcomplex(X',L) \xrightarrow{} 0$$ is a short exact sequence with $\homcomplex(X'',L)$ an exact complex. So $\homcomplex(X',L)$ is exact if and only if $\homcomplex(X,L)$ is exact. We proved (2).

Finally, Holm shows in~\cite[Proposition~1.4]{holm} that an Eilenberg swindle argument can be used to conclude (3) from both (1) and (2). It is clear that the argument given there, for $R$-modules, holds for classes of chain complexes as well. 
\end{proof}

We can now prove the main Theorem~\ref{them-Gorenstein AC-projectives complete cotorsion pair} stated in the Introduction.

\begin{proof}[Proof of Theorem~\ref{them-Gorenstein AC-projectives complete cotorsion pair}]
Again, $\class{GP}$ denotes the class of all Gorenstein AC-projective chain complexes,
and $\class{W}=\rightperp{\class{GP}}$. By Lemma~\ref{lem-thick} we know that $\class{W}$ is thick and contains all projective chain complexes. So by Proposition~\ref{prop-how to create a projective model structure} we will have a projective cotorsion pair once we show that $(\class{GP} ,\class{W})$ is a complete cotorsion pair. Before showing $(\class{GP} ,\class{W})$ is a cotorsion pair we first will show that for a given chain complex $X$, we can find a short exact sequence $0 \xrightarrow{} W \xrightarrow{} P \xrightarrow{} X \xrightarrow{} 0$ with $P$ Gorenstein AC-projective and $W \in \class{W}$. Indeed letting $S^0(X)$ be the double complex with $X$ concentrated in degree 0, we can use the complete cotorsion pair $(\class{C},\rightperp{\class{C}})$ of Corollary~\ref{cor-AC-acyclic-model-bicomplexes} to first obtain a short exact sequence of double complexes
\[
0 \xrightarrow{} \mathbb{Y} \xrightarrow{} \mathbb{C} \xrightarrow{}S^0(X) \xrightarrow{} 0
\]
with $\mathbb{C}$ an AC-acyclic complex of projective complexes and $\mathbb{Y} \in \rightperp{\class{C}}$; so $\mathbb{Y}$ is trivial in the AC-acyclic projective model structure.  
By the snake lemma, we get a short exact sequence
\[
0 \xrightarrow{} \mathbb{Y}_0/B_0\mathbb{Y}  \xrightarrow{} \mathbb{C}_0/B_0\mathbb{C} \xrightarrow{} X \xrightarrow{} 0.
\]
Of course $ \mathbb{C}_0/B_0\mathbb{C} \cong Z_{-1}\mathbb{C}$ is Gorenstein AC-projective by definition, but also $ \mathbb{Y}_0/B_0\mathbb{Y}$
is in $\class{W}$ by Lemma~\ref{lem-proj-cycles-of-W}, since
$\mathbb{Y}_{i}$ is projective (so level) for all $i \neq 0$ and $H_{i}\mathbb{Y}=0$ for all $i\neq
-1$. 

So we have shown that for any chain complex $X$ we can find a short exact sequence $0 \xrightarrow{} W \xrightarrow{} P \xrightarrow{} X \xrightarrow{} 0$ with $P \in \class{GP}$ and $W \in \class{W}$. From this and the fact that $\class{GP}$ is closed under retracts (Lemma~\ref{lemma-retracts}), a standard argument will show that $(\class{GP} ,\class{W})$ is indeed a cotorsion pair, and of course it has enough projectives. But then the so-called ``Salce-trick'' applies and tells us that the cotorsion pair also has enough injectives, and so it is a complete cotorsion pair.

The cotorsion pair $(\class{GP} ,\class{W})$ is cogenerated by the set of all Gorenstein AC-projective complexes with cardinality less than $\kappa$, where $\kappa$ is chosen as in Theorem~\ref{theorem-filtrations for complexes of projectives} (with $A$ as in Lemma~\ref{lemma-test-complex}). Indeed given any Gorenstein AC-projective complex $X$, we have $X = Z_0\mathbb{C}$ for some AC-acyclic complex of projective complexes $\mathbb{C}$. Theorem~\ref{theorem-filtrations for complexes of projectives} shows that $\mathbb{C}$ has a filtration $\mathbb{C} = \cup_{\alpha < \lambda}
\mathbb{Q}_{\alpha}$ where each $\mathbb{Q}_{\alpha}, \mathbb{Q}_{\alpha + 1}/\mathbb{Q}_{\alpha}$ are
also AC-acyclic complexes of projective complexes and $|\mathbb{Q}_{\alpha}|,
|\mathbb{Q}_{\alpha + 1}/\mathbb{Q}_{\alpha}| < \kappa$. It follows that $X = \cup_{\alpha < \lambda}
Z_0\mathbb{Q}_{\alpha}$ is also a filtration of $X$ by the Gorenstein AC-projective complexes $Z_0\mathbb{Q}_{\alpha}$ (with $\kappa$-bounded cardinality).
\end{proof}

The following corollary describes the homotopy category of the Gorenstein AC-projective model structure. It follows from~\cite[Lemma~5.1]{gillespie-hereditary-abelian-models}.

\begin{corollary}\label{cor-homotopy-Gorenstein-AC-proj}
For any ring $R$, the homotopy category of the Gorenstein AC-projective
model structure on $\ch$ is equivalent to the category of all Gorenstein
AC-projective complexes modulo the usual chain homotopy relation.
\end{corollary}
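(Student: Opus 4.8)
The plan is to invoke the general machinery for hereditary abelian model structures. Having just established in Theorem~\ref{them-Gorenstein AC-projectives complete cotorsion pair} that $(\class{GP},\class{W})$ is a projective cotorsion pair, we know in particular that it is hereditary (projective cotorsion pairs always are, as noted in Section~\ref{sec-prelim}), and that the associated abelian model structure has every object fibrant, the objects of $\class{GP}$ as the cofibrant objects, and $\class{GP} \cap \class{W} = \class{P}$, the class of projective chain complexes, as the trivially cofibrant objects. The homotopy category is then computed on the bifibrant--cofibrant objects, which here are simply the objects of $\class{GP}$, and the general result~\cite[Lemma~5.1]{gillespie-hereditary-abelian-models} identifies the homotopy category of such a model structure with the quotient of the (bi)fibrant--cofibrant objects by the relation of ``formal homotopy'' coming from the model structure.

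The key step is therefore to check that this formal homotopy relation, restricted to $\class{GP}$, coincides with the ordinary chain homotopy relation on chain maps between Gorenstein AC-projective complexes. For an abelian model structure, two maps $f,g \colon X \to Y$ between bifibrant objects are formally homotopic exactly when $f - g$ factors through an object that is both trivial and (say) cofibrant, i.e.\ through an object of $\class{GP} \cap \class{W} = \class{P}$. So I would spell out: a chain map $X \to Y$ with $X, Y \in \class{GP}$ is null in the homotopy category if and only if it factors through a projective chain complex. It then remains to note that a map between chain complexes factors through a projective complex precisely when it is null homotopic in the classical sense. One direction is the standard fact that a contracting homotopy on a disk (equivalently, the contractibility of projective complexes, cf.~\cite[Lemma~4.5]{gillespie-G-derived}) lets any null-homotopic map factor through $\oplus_n D^n(X_n)$, which is projective since each $X_n$ is a projective $R$-module when $X \in \class{GP}$; the converse is immediate because any map through a contractible complex is null homotopic.

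I would then assemble these observations: the homotopy category of the Gorenstein AC-projective model structure on $\ch$ is equivalent to $\class{GP}$ modulo ``factoring through a projective complex,'' which by the previous paragraph is the same as $\class{GP}$ modulo chain homotopy, i.e.\ the full subcategory $K(\class{GP})$ of $K(\rmod)$ on the Gorenstein AC-projective complexes. Since the model structure is hereditary, this homotopy category is moreover triangulated, consistent with the statement in Theorem~\ref{them-Gorenstein AC-projectives complete cotorsion pair} that it is equivalent to the triangulated category $K(\class{GP})$.

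The main obstacle, such as it is, is purely bookkeeping: one must be careful that when $X \in \class{GP}$ each component $X_n$ is genuinely a \emph{projective} $R$-module (this is part of the characterization in~\cite[Theorem~4.13]{bravo-gillespie} via Definition~\ref{def-Gorenstein AC-projective complex}, since $X = \ker(P^0 \to P^1)$ sits inside a projective complex and is a summand-in-each-degree of one), so that $\oplus_n D^n(X_n)$ is projective in $\ch$ and the classical ``null homotopic $\Leftrightarrow$ factors through a contractible (projective) complex'' dictionary applies verbatim. Once that is in place the identification with $K(\class{GP})$ is immediate from~\cite[Lemma~5.1]{gillespie-hereditary-abelian-models}.
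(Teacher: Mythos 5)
Your overall plan — reduce the corollary to the cited result \cite[Lemma~5.1]{gillespie-hereditary-abelian-models}, after first recording that the formal homotopy relation on bifibrant objects is ``$f-g$ factors through an object of $\class{GP}\cap\class{W}=\class{P}$'' — matches the paper, whose entire proof is that citation. But the place where you try to fill in the work the lemma does contains a genuine factual error, and it is exactly the step you yourself flag as ``the main obstacle.'' You assert that if $X\in\class{GP}$ then each component $X_n$ is a projective $R$-module, justified by the remark that $X=\ker(P^0\to P^1)$ ``sits inside a projective complex and is a summand-in-each-degree of one.'' Neither claim is correct. The characterization quoted from~\cite[Theorem~4.13]{bravo-gillespie} says each $X_n$ is a \emph{Gorenstein} AC-projective module, not a projective one; and the degreewise short exact sequences $0\to X_n\to (P^0)_n\to (P^0/X)_n\to 0$ have no reason to split. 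Indeed, if every $X\in\class{GP}$ had projective components then $\class{GP}$ would be contained in the class of DG-projective complexes, trivializing the entire theory; concretely, for $M$ a Gorenstein AC-projective module that is not projective, the disk $D^n(M)$ is a Gorenstein AC-projective complex (it is contractible and its components are $M$) whose components are not projective. As a result, $\oplus_n D^n(X_n)$ need not be projective in $\ch$, and the argument that a null-homotopic map $X\to Y$ with $X,Y\in\class{GP}$ therefore factors through a projective complex does not go through by the route you describe.

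So the identification ``formal homotopy = chain homotopy on $\class{GP}$,'' which you treat as routine bookkeeping, is really the entire content of the cited lemma, and it is not obtained by the DG-projective-style argument (which works precisely because DG-projective complexes \emph{do} have projective components). The paper does not attempt to rederive this; it simply invokes~\cite[Lemma~5.1]{gillespie-hereditary-abelian-models}, and your write-up should do the same rather than substitute the incorrect componentwise-projectivity claim.
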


We now relate the main theorem to the existence of certain precovers in $\ch$ that are of interest. First, by referring to Definition~\ref{def-Gorenstein AC-projective complex}, we note that by loosening the requirement ``for any level complex $L$'' to only requiring ``for any flat complex $F$'' (resp. ``for any projective complex $P$'') we reproduce the definition of the \emph{Ding projective} complexes of~\cite{Ding-Chen-complex-models} (resp. \emph{Gorenstein projective} complexes of~\cite{garcia-rozas}). 

\begin{corollary}\label{cor-precovers}
We have the following statements concerning existence of Gorenstein AC-projective, Ding projective, and Gorenstein projective precovers in $\ch$.
\begin{enumerate}
\item Every chain complex over any ring has a special Gorenstein AC-projective precover. 
\item If $R$ is a (right) coherent ring, then every chain complex has a special Ding projective precover. 
\item lf $R$ is any ring in which all level modules have finite projective dimension, then every chain complex has a special Gorenstein projective precover.
\end{enumerate} 
\end{corollary}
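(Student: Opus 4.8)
The plan is to deduce all three assertions from Theorem~\ref{them-Gorenstein AC-projectives complete cotorsion pair}, using the elementary fact recalled in Section~\ref{sec-cot} that the left half of a complete cotorsion pair always gives special precovers. Assertion (1) is then immediate: since $(\class{GP},\class{W})$ is a complete cotorsion pair with $\class{W}=\rightperp{\class{GP}}$, every chain complex $X$ fits into a short exact sequence $0 \to W \to P \to X \to 0$ with $P \in \class{GP}$ and $W \in \class{W}$, which is exactly a special $\class{GP}$-precover. So the only remaining work is to show that, under the hypotheses of (2) and (3), the class $\class{GP}$ coincides with the class of Ding projective, resp.\ Gorenstein projective, complexes, and then to apply (1) to that smaller class.

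For (2), suppose $R$ is right coherent. Then every finitely presented right $R$-module is of type $FP_{\infty}$, so the level left $R$-modules are precisely the flat left $R$-modules. Combining this with Proposition~\ref{prop-level chain complexes} (a complex is level iff it is exact with level cycles) and the analogous description of flat chain complexes (exact with flat cycles), the level chain complexes coincide with the flat chain complexes. Hence the condition in Definition~\ref{def-Gorenstein AC-projective complex} (remain exact after $\Hom_{\ch}(-,L)$ for every level complex $L$) becomes verbatim the defining condition of a Ding projective complex in the sense of~\cite{Ding-Chen-complex-models}, so $\class{GP}$ is exactly the class of Ding projective complexes and (2) follows from (1).

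For (3), assume every level $R$-module has finite projective dimension. One inclusion is free: a projective complex is flat, hence level, so the Gorenstein AC-projective condition implies the Gorenstein projective condition of~\cite{garcia-rozas}, i.e.\ $\class{GP}$ is contained in the class of Gorenstein projective complexes. For the reverse inclusion I would first upgrade the pointwise hypothesis to a uniform bound: by the module version of Proposition~\ref{prop-level complexes are transfinite extensions} every level $R$-module is a transfinite extension of members of a fixed \emph{set} of level modules, each of finite projective dimension, and since modules of projective dimension $\le d$ are closed under transfinite extensions there is a single $d$ with $\mathrm{pd}_R L \le d$ for all level $L$. A standard inductive complex construction (peeling off $\bigoplus_n D^{n+1}(Q_n) \twoheadrightarrow L$ built from epimorphisms $Q_n \twoheadrightarrow Z_n L$ from projectives) then shows that any exact complex with all cycles of projective dimension $\le d$ has projective dimension $\le d$ in $\ch$; in particular every level chain complex $L$ has $\mathrm{pd}_{\ch} L \le d$. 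Finally, if $X$ is a Gorenstein projective complex, realized as $X=\ker(P^0\to P^1)$ in a complex $\mathbb{P}$ of projective complexes that stays exact after $\Hom_{\ch}(-,Q)$ for all projective complexes $Q$, I would take a length-$d$ projective resolution of a given level complex $L$ in $\ch$, split it into short exact sequences, and dimension-shift: since every component of $\mathbb{P}$ is a projective complex, applying $\Hom_{\ch}(-,N)$ in each degree sends short exact sequences of complexes to short exact sequences of complexes of abelian groups, so a descending induction from $\Hom_{\ch}(\mathbb{P},Q)$-exactness yields $\Hom_{\ch}(\mathbb{P},L)$-exactness. Thus $X\in\class{GP}$, so $\class{GP}$ is exactly the class of Gorenstein projective complexes and (3) follows from (1).

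The routine ingredients are the cotorsion-pair-to-precover passage and the dimension shifting. I expect the genuine obstacle to lie in (3): verifying carefully that the pointwise hypothesis on level \emph{modules} forces a uniform bound on the projective dimension of level \emph{chain complexes} in $\ch$ (the two uses of Proposition~\ref{prop-level complexes are transfinite extensions} together with the inductive complex construction), and being precise about identifying ``$\mathbb{P}$ remains exact after $\Hom_{\ch}(-,L)$'' with exactness of the relevant complex of abelian groups, in the spirit of Lemma~\ref{lemma-AC-acyclic-bicomplexes}. Everything else reduces directly to the main theorem.
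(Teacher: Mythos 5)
Your overall strategy is the same as the paper's: deduce (1) directly from the completeness of $(\class{GP},\class{W})$, reduce (2) to the coincidence of level and flat complexes over a coherent ring, and reduce (3) to showing that every level chain complex has finite projective dimension in $\ch$ and then dimension-shifting along a finite projective resolution. Items (1) and (2) are correct and match the paper.

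There is, however, a gap in your argument for the uniform bound in (3). You write that every level $R$-module is a transfinite extension of members of a fixed set $\class{S}$ of level modules, ``each of finite projective dimension,'' and conclude that ``there is a single $d$ with $\mathrm{pd}_R L \le d$.'' But knowing that each member of $\class{S}$ has \emph{some} finite projective dimension does not give a uniform $d$: a set of modules can have finite but unbounded projective dimensions, and in that case the transfinite-extension closure of ``pd $\le d$'' is of no help. The paper sidesteps this by using a different closure property: level modules are closed under direct sums, so if level modules had unbounded projective dimensions one could choose $L_n$ with $\mathrm{pd}(L_n) > n$ and then $\bigoplus_n L_n$ would be a level module of infinite projective dimension, a contradiction. (Equivalently, one can apply this direct-sum argument to the set $\class{S}$ itself to get the uniform bound you need, and only then is your transfinite-extension step sound --- but at that point the transfinite-extension detour is unnecessary.) Once you have the uniform bound, the remainder of your argument --- lifting it to complexes via an inductive construction on cycles, and then dimension-shifting $\Hom_{\ch}(\mathbb{P},-)$ along a finite projective resolution of $L$ in $\ch$ --- follows the same lines as the paper and is fine.
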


ln particular, (3) says that if $R$ is a (right) coherent ring in which all flat (left) modules have finite projective dimension (called \emph{left $n$-perfect}), then every chain complex has a special Gorenstein projective precover. This was also recently established in~\cite{estrada-iacob-odabasi-precovers} and~\cite{gillespie-recollement}. The same results of Corollary~\ref{cor-precovers}, but for $R$-modules, are proved in~\cite{bravo-gillespie-hovey}. 

\begin{proof}
The first statement is clear from the Definition given in Section~\ref{sec-cot}. For the second statement,
if $R$ is a (right) coherent ring, then a chain complex of (left) $R$-modules is level if and only if it is flat~\cite{bravo-gillespie}. So in this case Gorenstein AC-projective coincides with the notion of Ding projective.

For the last statement, suppose all level modules have finite projective dimension. Since level modules are closed under direct sums there must be an upper bound on the projective dimensions. Using the characterization of level complexes from Proposition~\ref{prop-level chain complexes} one can argue that all level complexes also have finite projective dimension (and with the same upper bound on their dimensions). So if $L$ is a level complex then we can take a finite projective resolution $$0 \xrightarrow{} Q_n \xrightarrow{} \cdots \xrightarrow{} Q_2 \xrightarrow{} Q_1 \xrightarrow{} Q_0 \xrightarrow{} L \xrightarrow{} 0.$$ Now if we let $\class{P}_{\circ}$ denote an exact complex of projectives as in Definition~\ref{def-Gorenstein AC-projective complex}, we can apply $\Hom_{\ch}(\class{P}_{\circ}, - )$ to the above resolution of $L$ and argue that if $\Hom_{\ch}(\class{P}_{\circ},Q)$ is exact for any projective chain complex $Q$, then $\Hom_{\ch}(\class{P}_{\circ},L)$ is also exact for $L$. So the notion of Gorenstein AC-projective coincides with the usual notion of Gorenstein projective in this case. 
\end{proof}

In fact, most rings encountered in practice are (one-sided) Noetherian or at least (one-sided) coherent. And we refer the reader to~\cite[Page~892]{gillespie-recollement} for a lengthy discussion of the many rings satisfying the property that every flat module has finite projective dimension. So for most rings encountered in practice the three notions appearing in Corollary~\ref{cor-precovers} coincide. This is also true for the Ding-Chen rings considered in the next section, though over such rings a flat module need not have finite projective dimension; see the Remark at the end of Section~\ref{sec-Ding-Chen}.

\section{The case of Ding-Chen rings}\label{sec-Ding-Chen}

The model structure we just constructed in Section~\ref{sec-Goren-AC-proj} is a cofibrantly generated, hereditary, abelian model structure. As such it is known that its homotopy category is a well-generated triangulated category in the sense of~\cite{neeman-well generated}. We now show it is in fact a compactly generated category in the case that $R$ is a Ding-Chen ring in the sense of~\cite{gillespie-ding}. Such a ring is, by definition, a two-sided coherent ring in which ${}_RR$ and $R_R$ each have finite absolutely pure (FP-injective) dimension. The two-sided Noetherian Ding-Chen rings are precisely the Gorenstein rings of Iwanaga~\cite{iwanaga,iwanaga2}. The main result here is Theorem~\ref{them-Ding-Chen case}. The compactly generated part of the theorem may be viewed as a chain complex analog to a result of Stovicek~\cite[Prop.~7.9]{stovicek-purity}, though our proof is entirely different.

Again, Theorem~\ref{them-Gorenstein AC-projectives complete cotorsion pair} shows that for any ring $R$, we have the projective cotorsion pair $\class{M}_{prj} = (\class{GP},\rightperp{\class{GP}})$, which induces the Gorenstein AC-projective model structure on $\ch$. But we also have the injective cotorsion pair $\class{M}_{inj} = (\leftperp{\class{GI}},\class{GI})$, inducing the Gorenstein AC-injective model structure on $\ch$; see~\cite[Theorem~3.3]{bravo-gillespie}.

\begin{lemma}\label{lemma-Quillen functor}
For any ring $R$, the identity functor is a left Quillen functor from  $\class{M}_{prj} = (\class{GP},\rightperp{\class{GP}})$,
the Gorenstein AC-projective model structure, to $\class{M}_{inj} = (\leftperp{\class{GI}},\class{GI})$, the Gorenstein AC-injective model structure.  
\end{lemma}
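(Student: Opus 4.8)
The plan is to verify that the identity functor, viewed as a functor $\ch \to \ch$ from $\class{M}_{prj}$ to $\class{M}_{inj}$, is a left Quillen functor. Since both are abelian model structures on the same underlying category with the identity functor as both adjoints, it suffices (by the standard criterion for Quillen adjunctions between abelian model structures; see~\cite{hovey,gillespie-hereditary-abelian-models}) to show that the identity takes (trivial) cofibrations of $\class{M}_{prj}$ to (trivial) cofibrations of $\class{M}_{inj}$. Concretely, recalling Hovey's dictionary, this amounts to two containments of classes of objects: first, that every cofibrant object of $\class{M}_{prj}$ is cofibrant in $\class{M}_{inj}$, i.e. $\class{GP} \subseteq \leftperp{\class{GI}}$; and second, that every trivially cofibrant object of $\class{M}_{prj}$ is trivially cofibrant in $\class{M}_{inj}$, i.e. every projective chain complex lies in $\leftperp{\class{GI}} \cap \leftperp{\class{GI}}{}^{\perp\text{-trivial}}$ — more precisely, that the projective complexes (which are the trivially cofibrant objects of $\class{M}_{prj}$) are trivial and cofibrant in $\class{M}_{inj}$.

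First I would dispatch the easy part: the projectives. In $\class{M}_{inj}$ the trivial objects are the class $\leftperp{\class{GI}}$-trivials, and in particular every projective complex is trivial there (projectives are trivial in any projective \emph{or} injective model structure since they are contractible, hence in $\leftperp{\class{GI}}$ and orthogonal to everything relevant). One checks directly from~\cite[Theorem~3.3]{bravo-gillespie} that the projective complexes are both cofibrant and trivial in the Gorenstein AC-injective model structure, so they are trivially cofibrant there. This handles the trivial cofibrations.

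The heart of the argument — and the step I expect to be the main obstacle — is the object-level containment $\class{GP} \subseteq \leftperp{\class{GI}}$, that is, $\Ext^1_{\ch}(X, G) = 0$ for every Gorenstein AC-projective complex $X$ and every Gorenstein AC-injective complex $G$. I would prove this by going back to Definition~\ref{def-Gorenstein AC-projective complex}: write $X = \ker(P^0 \to P^1)$ inside a totally acyclic complex $\cdots \to P_0 \to P^0 \to P^1 \to \cdots$ of projective complexes that stays exact after $\Hom_{\ch}(-,L)$ for every level complex $L$. Dually, $G$ sits inside a complex of injective complexes that stays exact after $\Hom_{\ch}(A,-)$ for every absolutely clean complex $A$. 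The key input is that $\Ext^1_{\ch}$ of a Gorenstein AC-projective against a Gorenstein AC-injective can be computed via these resolutions and reduces to a $\Hom$-exactness statement; combined with the fact (used already in Lemma~\ref{lem-thick}) that $\Ext^i_{\ch}(X,L) = 0$ for $X$ Gorenstein AC-projective and $L$ level, and the character-duality machinery of Section~\ref{subsec-character duality} relating level and absolutely clean complexes, one gets the vanishing. Alternatively — and this may be cleaner — one can invoke that the cotorsion pair $(\class{GP},\class{W})$ is hereditary (automatic for projective cotorsion pairs) together with the observation that every Gorenstein AC-injective complex lies in $\class{W} = \rightperp{\class{GP}}$; this last point follows because a Gorenstein AC-injective complex has finite (indeed, it is already) "AC-injective" resolution dimension and $\Ext^{>0}$ from $\class{GP}$ into injectives vanishes, so a dimension-shifting argument against the defining coresolution of $G$ gives $\Ext^1_{\ch}(X,G)=0$. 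I would write up whichever of these two routes turns out to require the least new bookkeeping, and then conclude that the identity is left Quillen.
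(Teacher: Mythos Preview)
Your proposal rests on a misidentification of the cofibrant objects in $\class{M}_{inj}$. The pair $(\leftperp{\class{GI}},\class{GI})$ is an \emph{injective} cotorsion pair: $\leftperp{\class{GI}}$ is the class of \emph{trivial} objects and $\class{GI}$ the class of fibrant objects, while \emph{every} object is cofibrant. Hence the cofibrations in $\class{M}_{inj}$ are all monomorphisms, and the preservation of cofibrations by the identity is automatic --- there is nothing to prove on that side. The only nontrivial check is that trivial cofibrations in $\class{M}_{prj}$ (monomorphisms with categorically projective cokernel) are trivial cofibrations in $\class{M}_{inj}$ (monomorphisms with cokernel in $\leftperp{\class{GI}}$), and this is immediate since $\Ext^1_{\ch}(P,-)=0$ for any projective complex $P$. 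That is exactly the paper's one-line argument.

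More seriously, the containment $\class{GP}\subseteq\leftperp{\class{GI}}$ that you call ``the heart of the argument'' is not only unnecessary but \emph{false} in general. Over any quasi-Frobenius ring that is not semisimple (for instance $R=k[x]/(x^2)$), every module is simultaneously Gorenstein projective and Gorenstein injective, yet $\Ext^1_R$ does not vanish identically; so $\class{GP}\not\subseteq\leftperp{\class{GI}}$ already at the module level, and the same obstruction passes to complexes via spheres. Both of your proposed routes to this containment therefore cannot succeed: a Gorenstein AC-injective complex is not level in general, and it need not lie in $\class{W}=\rightperp{\class{GP}}$ (over a Ding-Chen ring $\class{W}=\leftperp{\class{GI}}$, and $\class{GI}\cap\class{W}$ consists only of the injectives). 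Once you correct the identification of cofibrant objects in $\class{M}_{inj}$, the lemma becomes essentially trivial.
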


\begin{proof}
It is clear that the identity functor takes cofibrations (resp. trivial cofibrations) in the
Gorenstein AC-projective model structure, which are monomorphisms with Gorenstein AC-projective (resp. categorically projective) cokernels, to cofibrations in the Gorenstein AC-injective model
structure, which are monomorphisms with any cokernel (resp. trivial cokernel).  Note that a categorically projective complex $P$ certainly is trivial in the Gorenstein AC-injective model structure because $\Ext^1_{\ch}(P,X) = 0$ for any Gorenstein AC-injective complex $X$. Since the identity functor is a left adjoint (to itself) and preserves cofibrations and trivial cofibrations it is a left Quillen functor by definition. 
\end{proof}

\begin{lemma}\label{lemma-trivials}
Let $R$ be a Ding-Chen ring. That is, a two-sided coherent ring in which ${}_RR$ and $R_R$ each have finite absolutely pure dimension. Then $\rightperp{\class{GP}} = \leftperp{\class{GI}}$. This class, denoted $\class{W}$, consists precisely of all chain complexes having finite flat  (equivalently, absolutely pure) dimension. A chain complex $W$ is in $\class{W}$ if and only if it is exact and each cycle module $Z_nW$ has finite flat (equivalently, absolutely pure) dimension in $R$-Mod. 
\end{lemma}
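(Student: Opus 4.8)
The plan is to describe $\rightperp{\class{GP}}$ explicitly, and symmetrically $\leftperp{\class{GI}}$, and then read off from the descriptions that the two classes coincide. The key input is the module-level version of the statement, which is known: over a Ding-Chen ring $R$ a module has finite flat dimension if and only if it has finite absolutely pure dimension, in which case that dimension is at most the FP-injective dimension $n$ of $R$ over itself, and the resulting thick class $\mathcal{W}_0 \subseteq \rmod$ equals both $\rightperp{\mathcal{GP}_0}$ and $\leftperp{\mathcal{GI}_0}$, where $\mathcal{GP}_0$ (resp.\ $\mathcal{GI}_0$) is the class of Gorenstein AC-projective (resp.\ Gorenstein AC-injective) $R$-modules; see~\cite{gillespie-ding}, and~\cite{bravo-gillespie-hovey} for the modules. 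So the task is to transfer this to $\ch$.

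First I would record the complex-level bookkeeping. Since $R$ is coherent, the level complexes are exactly the categorically flat complexes and the absolutely clean complexes are exactly the absolutely pure complexes, and by Proposition~\ref{prop-level chain complexes} both are the exact complexes whose cycle modules are flat, resp.\ absolutely pure. Using this, the uniformity of the bound $n$, and the fact that each cycle functor $Z_m(-)$ is exact on short exact sequences all of whose terms are exact complexes, one checks that a complex $W$ has finite flat dimension in $\ch$ if and only if $W$ is exact and each $Z_nW$ has finite flat dimension in $\rmod$ --- given a finite flat resolution of $W$ in $\ch$, its $n$-th syzygy is exact with flat cycles, hence flat; conversely, breaking a flat resolution of a finite-flat-dimension $W$ into short exact sequences and applying each $Z_m$ shows $\mathrm{fd}(Z_mW) \leq \mathrm{fd}(W)$ --- and likewise with ``absolutely pure'' in place of ``flat''. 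By the module input these two classes of complexes coincide; call the common class $\class{W}'$. It is exactly the class named in the lemma, so it remains to prove $\rightperp{\class{GP}} = \class{W}' = \leftperp{\class{GI}}$.

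One inclusion in each equality is easy. Since level $=$ flat here, the argument in the proof of Lemma~\ref{lem-thick} gives $\Ext^{i}_{\ch}(G,F)=0$ for all $i \geq 1$, all $G \in \class{GP}$, and all flat complexes $F$, so dimension-shifting along a finite flat resolution yields $\class{W}' \subseteq \rightperp{\class{GP}}$; dually, working with the injective cotorsion pair $(\leftperp{\class{GI}},\class{GI})$ of~\cite[Theorem~3.3]{bravo-gillespie} and absolutely pure complexes in place of flat ones, $\class{W}' \subseteq \leftperp{\class{GI}}$. For the reverse inclusions I would invoke from~\cite{gillespie-ding-modules} a suitable Ding-Chen-case description of $\class{GP}$ (e.g.\ as the exact complexes all of whose cycle modules lie in $\mathcal{GP}_0$). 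Then, given $W \in \rightperp{\class{GP}}$, I would test $W$ against the totally acyclic complexes of projective $R$-modules having prescribed degree-zero cycle an arbitrary module of $\mathcal{GP}_0$ (such complexes lie in $\class{GP}$ by the previous sentence and exist by completeness of the module cotorsion pair $(\mathcal{GP}_0,\mathcal{W}_0)$); applying Lemma~\ref{lemma-homcomplex-basic-lemma} and dimension-shifting to the vanishing of $\Ext^{i}_{\ch}$, $i \geq 1$, against these complexes first forces $W$ to be exact and then forces each $Z_nW$ into $\rightperp{\mathcal{GP}_0}=\mathcal{W}_0$, i.e.\ $W \in \class{W}'$. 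The dual argument gives $\leftperp{\class{GI}} \subseteq \class{W}'$, and combining the four inclusions proves the lemma. I expect this last step --- extracting exactness of $W$ and then cyclewise membership from the $\Ext$-vanishing, with the correct indexing of syzygies --- to be the main obstacle; it is essentially where the Ding-Chen results of~\cite{gillespie-ding-modules} do the work, and an alternative would be to quote from there directly the identification of the trivial classes of the Ding projective and Ding injective model structures on $\ch$.
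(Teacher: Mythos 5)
Your overall plan---describe $\rightperp{\class{GP}}$ and $\leftperp{\class{GI}}$ explicitly by reducing to the module-level Ding--Chen results and then matching the descriptions---is a legitimate route, but it is not the paper's route, and it contains one genuine error. The paper's proof is a direct citation: since $R$ is coherent, level complexes are flat complexes, so Gorenstein AC-projective (resp.\ AC-injective) complexes coincide with the Ding projective (resp.\ Ding injective) complexes of~\cite{Ding-Chen-complex-models}, and the lemma is then exactly~\cite[Theorem~4.5]{Ding-Chen-complex-models}. You essentially propose to re-derive that theorem, which is fine in principle, but your parenthetical ``(e.g.\ as the exact complexes all of whose cycle modules lie in $\mathcal{GP}_0$)'' misstates the Ding--Chen-case characterization of $\class{GP}$. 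Gorenstein AC-projective complexes are not exact in general---for instance $S^0(M)$ for $M \in \mathcal{GP}_0$ is in $\class{GP}$ but is not exact unless $M=0$. The correct description, given in Lemma~\ref{lemma-AC-complexes} of this paper (via~\cite{gillespie-ding-modules}), is that $\class{GP}$ consists of all complexes $X$ with each \emph{component} $X_n$ Gorenstein projective, with no exactness hypothesis. Your downstream use of the wrong characterization happens to land on a true statement (totally acyclic complexes of projectives do lie in $\class{GP}$, since their components are projective hence Gorenstein projective), so the mistake does not immediately break the argument, but it should be corrected.

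Beyond that, the step you yourself flag as ``the main obstacle''---deducing from $\Ext^{\geq 1}_{\ch}(G,W)=0$ for all $G\in\class{GP}$ first that $W$ is exact and then that each $Z_nW\in\rightperp{\mathcal{GP}_0}$, with correct syzygy bookkeeping---is exactly the nontrivial content and is left as a sketch. There is a chicken-and-egg issue here: the clean isomorphisms relating $\Ext^1_{\ch}(S^n(M),W)$ to $\Ext^1_R(M,Z_nW)$ require $W$ exact, so exactness must be established separately first (e.g.\ by testing against disks $D^n(P)$ or by an explicit degreewise argument). You acknowledge that one could ``quote directly the identification of the trivial classes,'' and that is precisely what the paper does, via~\cite[Theorem~4.5]{Ding-Chen-complex-models}. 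In short: your scaffolding is sound and the preliminary reductions (level $=$ flat, absolutely clean $=$ absolutely pure over coherent rings; the description of finite flat/absolutely pure dimension complexes) are correct, but the characterization of $\class{GP}$ needs fixing and the final extraction argument needs to be carried out in full or replaced by the citation the paper uses.
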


\begin{proof}
Since $R$ is coherent, a level complex is the same as a flat complex, and so a Gorenstein AC-projective complex is exactly a \emph{Ding projective} complex in the sense of~\cite[Section~3]{Ding-Chen-complex-models}. (Similarly, the Gorenstein AC-injectives coincide with the \emph{Ding injective} complexes.) The result now follows from~\cite[Theorem~4.5]{Ding-Chen-complex-models}.
\end{proof}

\begin{lemma}\label{lemma-AC-complexes}
Let $R$ be a Ding-Chen ring. That is, a two-sided coherent ring in which ${}_RR$ and $R_R$ each have finite absolutely pure dimension. Then the class $\class{GP}$ of Gorenstein AC-projective complexes coincides with the class of (usual) Gorenstein projective complexes, and these are precisely the complexes $X$ having each component $X_n$ a Gorenstein projective $R$-module (in the usual sense of~\cite{enochs-jenda-book}). Similarly, the class $\class{GI}$ of Gorenstein AC-injectives coincides with the class of (usual) Gorenstein injective complexes, and these are precisely the complexes $X$ having each component $X_n$ a Gorenstein injective $R$-module.
\end{lemma}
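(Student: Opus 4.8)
The plan is to deduce everything from the characterization of Ding projective and Ding injective complexes given in~\cite{gillespie-ding-modules}, combined with the Ding-Chen hypothesis. First I would recall that, by Lemma~\ref{lemma-trivials}, over a coherent ring a level complex is the same as a flat complex, so the class $\class{GP}$ of Gorenstein AC-projective complexes is exactly the class of Ding projective complexes in the sense of~\cite[Section~3]{Ding-Chen-complex-models}, and likewise $\class{GI}$ coincides with the Ding injective complexes. So the statement reduces to two claims: (i) over a Ding-Chen ring, the Ding projective complexes are precisely the complexes with all components Ding projective $R$-modules, and dually for Ding injectives; and (ii) over a Ding-Chen ring, a Ding projective (resp. Ding injective) $R$-module is the same as a Gorenstein projective (resp. Gorenstein injective) $R$-module in the usual sense.

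For claim (ii), I would invoke~\cite{gillespie-ding-modules}: over a coherent ring $R$ in which $R$ has finite self FP-injective dimension on both sides (i.e.\ a Ding-Chen ring), the flat modules all have finite projective dimension relative to the Gorenstein projective machinery in the appropriate sense, and the Ding projective modules coincide with the usual Gorenstein projective modules; dually the Ding injective modules coincide with the usual Gorenstein injective modules. This is precisely the content of the module-level results characterizing Ding modules over Ding-Chen rings in~\cite{gillespie-ding-modules}, and I would cite that directly rather than reprove it.

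For claim (i), I would appeal to the complex-level characterization of Ding projective and Ding injective complexes. By~\cite[Section~3]{Ding-Chen-complex-models} (or the relevant theorem in~\cite{gillespie-ding-modules} extending Proposition~\ref{prop-dual-exact} to this setting), a complex $X$ is Ding projective if and only if each $X_n$ is a Ding projective $R$-module and $X$ satisfies the relevant acyclicity condition against flat complexes; over a Ding-Chen ring the latter condition is automatic once the components are Ding projective, because every flat complex has finite flat dimension zero but, more to the point, the total acyclicity against flat (= level) complexes follows from the module-level total acyclicity together with the coherence of $R$. Concretely: take an exact complex of projective complexes $\mathbb{P}_\bullet$ with $X = Z_0$; since $R$ is Ding-Chen, the class $\class{W}$ of Lemma~\ref{lemma-trivials} consists of the complexes of finite flat dimension, and one checks that $\Hom_{\ch}(\mathbb{P}_\bullet, F)$ stays exact for every flat complex $F$ precisely because it does so componentwise (each $(\mathbb{P}_\bullet)_n$ being a complex of Gorenstein projectives with the required $\Ext$-vanishing against modules of finite flat dimension), and the product formula of Section~\ref{subsec-complexes} for $\homcomplex$ transfers this to the complex level. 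The dual argument handles Ding injectives, using Proposition~\ref{cor-duality} and character duality to pass between the two.

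The main obstacle I anticipate is claim (i), specifically verifying that the componentwise condition ``each $X_n$ is Gorenstein projective'' really does force the global total-acyclicity condition against all flat complexes, rather than merely against flat modules placed in a single degree. This is where one genuinely needs the Ding-Chen hypothesis (finite self FP-injective dimension on both sides) and the coherence of $R$: it guarantees that flat complexes are built, up to finite flat resolution, from spheres on flat modules, so that Lemma~\ref{lemma-transfinite extensions of spheres and disks}-style dévissage reduces testing exactness of $\homcomplex(X, -)$ on an arbitrary flat complex to testing it on spheres $S^n(F)$ with $F$ a flat module — at which point the module-level statement applies degreewise. Everything else is routine bookkeeping with the functors of Section~\ref{subsec-complexes} and citation of the established module-level and complex-level characterizations.
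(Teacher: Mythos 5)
Your overall route matches the paper's: over a coherent ring Gorenstein AC-projective coincides with Ding projective (and dually for injectives), and the rest is cited to~\cite{gillespie-ding-modules}; the paper does exactly this, citing~\cite[Theorem~1.1/1.2]{gillespie-ding-modules} which already packages both your claim (i) and claim (ii) for modules and for complexes simultaneously, so the two claims need not be separated. Your worry that claim (i) might need an independent argument is unfounded once that theorem is invoked.

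One flaw in the supplementary sketch deserves flagging, since if you tried to fill the perceived gap along the lines you wrote you would go astray. You say that ``flat complexes are built, up to finite flat resolution, from spheres on flat modules,'' so that exactness of $\homcomplex(X,-)$ can be tested on $S^n(F)$ with $F$ flat. But a flat complex (in the categorical sense relevant here, and a fortiori a level complex) is by definition exact with flat cycles, so $S^n(F)$ is never a flat complex unless $F=0$; flat/level complexes filter through disks $D^n(-)$ on flat modules, not spheres. The correct reduction --- which is what~\cite{gillespie-ding-modules} actually carries out --- uses the disk dévissage of Lemma~\ref{lemma-transfinite extensions of spheres and disks} together with the characterization in~\cite[Theorem~4.13]{bravo-gillespie} that $X$ is Gorenstein AC-projective iff each $X_n$ is and every chain map $X\to L$ into a level complex is null-homotopic. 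As written your dévissage step would fail; fortunately the citation you also offer is the load-bearing part and is correct.
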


\begin{proof}
Again since $R$ is coherent, Gorenstein AC-projective coincides with Ding projective and Gorenstein AC-injective coincides with Ding-injective. The result now comes from~\cite[Theorem~1.1/1.2]{gillespie-ding-modules}.  
\end{proof}

We are now ready to prove the main result concerning Gorenstein AC-projectives in the case that $R$ is a Ding-Chen ring.  

\begin{theorem}\label{them-Ding-Chen case}
Let $R$ be a Ding-Chen ring. That is, a two-sided coherent ring in which ${}_RR$ and $R_R$ each have finite absolutely pure dimension. Then the identity functor is a Quillen equivalence from  $\class{M}_{prj} = (\class{GP},\class{W})$,
the Gorenstein AC-projective model structure, to $\class{M}_{inj} = (\class{W},\class{GI})$, the Gorenstein AC-injective model structure.  The associated homotopy category is compactly generated and equivalent to the chain homotopy category of all chain complexes $X$ having each component $X_n$ a Gorenstein projective $R$-module (in the usual sense of~\cite{enochs-jenda-book}). This in turn is equivalent to the chain homotopy category of all chain complexes $X$ having each component $X_n$ a Gorenstein injective $R$-module.
\end{theorem}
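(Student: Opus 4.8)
The plan is to deduce the whole statement formally from the lemmas just established together with Hovey's correspondence. First, for the Quillen equivalence: Lemma~\ref{lemma-Quillen functor} already gives that the identity is a left Quillen functor $\class{M}_{prj} \to \class{M}_{inj}$, and Lemma~\ref{lemma-trivials} tells us that over a Ding-Chen ring the two model structures share the \emph{same} class of trivial objects, $\class{W} = \rightperp{\class{GP}} = \leftperp{\class{GI}}$. Since the weak equivalences of an abelian model structure are determined by the thick class of trivial objects alone (a general fact about abelian model structures; see~\cite{hovey},~\cite{gillespie-hereditary-abelian-models}), the two model structures on $\ch$ have exactly the same weak equivalences. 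Hence both homotopy categories are the same localization of $\ch$ at this single class of maps, the total left derived functor of the identity is naturally isomorphic to the identity on that localization, and therefore the identity functor is a Quillen equivalence. (Equivalently one checks the criterion directly: a map $X \to Y$ in $\ch$ is a weak equivalence in $\class{M}_{prj}$ if and only if it is one in $\class{M}_{inj}$.)

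Next I would identify the homotopy category. Corollary~\ref{cor-homotopy-Gorenstein-AC-proj} gives that $\Ho(\class{M}_{prj})$ is equivalent to $K(\class{GP})$, the chain homotopy category of all Gorenstein AC-projective complexes. By Lemma~\ref{lemma-AC-complexes}, over a Ding-Chen ring $\class{GP}$ is precisely the class of chain complexes $X$ with every component $X_n$ a Gorenstein projective $R$-module in the usual sense, which yields the first description. Dually, $\class{M}_{inj}$ is an injective model structure in which every object is cofibrant and the fibrant objects are exactly those of $\class{GI}$, so its homotopy category is equivalent to $K(\class{GI})$ (the dual of~\cite[Lemma~5.1]{gillespie-hereditary-abelian-models}); and again by Lemma~\ref{lemma-AC-complexes} the class $\class{GI}$ is precisely the class of complexes with every component a Gorenstein injective $R$-module. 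Combining these with the Quillen equivalence of the previous paragraph gives the chain of equivalences asserted in the theorem.

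Finally, for compact generation: by Theorem~\ref{them-Gorenstein AC-projectives complete cotorsion pair} the model structure $\class{M}_{prj}$ is cofibrantly generated, and I would upgrade this to \emph{finitely} generated in the sense of Hovey. The generating trivial cofibrations can be taken to be the inclusions $0 \to D^n(R)$, which have finitely generated domains and codomains; the generating cofibrations are inclusions with cokernel a Gorenstein AC-projective complex from the cogenerating set, and using that $R$ is coherent with finite self-FP-injective dimension one shows that this cogenerating set may be chosen to consist of complexes of type $FP_{\infty}$ — equivalently, by coherence, bounded complexes of finitely presented $R$-modules with Gorenstein projective components. With finitely presented domains and codomains for the generators, the model structure is finitely generated, and Hovey's~\cite[Corollary~7.4.4]{hovey-model-categories} then gives that its homotopy category is compactly generated. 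I expect this reduction of the cogenerating set to $FP_{\infty}$ complexes to be the main obstacle: it is the one genuinely non-formal step, and it is where the Ding-Chen hypotheses are actually used (via the characterizations of Ding and Gorenstein modules and complexes underlying Lemmas~\ref{lemma-trivials} and~\ref{lemma-AC-complexes}); the rest of the argument is a formal consequence of Hovey's machinery and the identifications already in hand.
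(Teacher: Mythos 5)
Your outline matches the paper's proof closely: the Quillen-equivalence part (left Quillen functor by Lemma~\ref{lemma-Quillen functor}, equal trivial classes by Lemma~\ref{lemma-trivials}, hence equal weak equivalences and a Quillen equivalence — the paper cites~\cite[Lemma~5.4]{estrada-gill-coherent} but your direct argument is the same content), and the identification of the homotopy categories as $K(\class{GP})$ and $K(\class{GI})$ via Corollary~\ref{cor-homotopy-Gorenstein-AC-proj}, its injective dual, and Lemma~\ref{lemma-AC-complexes}, are exactly what the paper does.

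The gap is in the finite-generation step, and you have correctly flagged it as the one genuinely non-formal point, but your proposal leaves it as ``one shows.'' The paper does not try to shrink the cogenerating set coming from the proof of Theorem~\ref{them-Gorenstein AC-projectives complete cotorsion pair}; it constructs a fresh one. Writing $d$ for the Ding-Chen dimension of $R$, it takes $\class{S} = \{\Omega^d F\}$ where $F$ ranges over (representatives of) all finitely presented chain complexes and $\Omega^d F$ is the $d$th syzygy in a resolution by finitely generated projective complexes. The crucial verification — which is the step you would actually have to prove — is that $\rightperp{\class{S}}$ is precisely the class of complexes of FP-injective (absolutely pure) dimension $\leq d$; this is a dimension-shifting argument in the spirit of~\cite[Theorem~8.3]{hovey}. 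Combined with Lemma~\ref{lemma-trivials}, which identifies $\class{W}$ with exactly that class over a Ding-Chen ring, this shows $\rightperp{\class{S}} = \class{W}$, so $\class{S}$ cogenerates $(\class{GP},\class{W})$. Coherence of $R$ ensures each $\Omega^d F$ is again finitely presented (finitely presented complexes are closed under kernels of maps from finitely generated projectives). From there, as in~\cite[Theorem~9.4]{hovey}, one obtains the concrete finite generating sets $I = \{\Omega^{d+1}F \hookrightarrow P_d\} \cup \{0 \hookrightarrow D^n(R)\}$ and $J = \{0 \hookrightarrow D^n(R)\}$, and~\cite[Corollary~7.4.4]{hovey-model-categories} gives compact generation. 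So: your description of the end state of the cogenerating set (finitely presented Gorenstein projective complexes) is compatible with the paper's $\Omega^d F$'s, but the proof that such a set cogenerates does not come from ``reducing'' the earlier cogenerating set; it comes from the $d$th-syzygy construction together with the FP-injective-dimension characterization of $\class{W}$, and that argument needs to be supplied.
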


\begin{proof}
Lemma~\ref{lemma-Quillen functor} tell us the identity is a left Quillen functor between the two model structures.  Lemma~\ref{lemma-trivials} tells us that the class of trivial objects in the two model structures are equal. It follows that the two homotopy categories are equal and the identity functor becomes a Quillen equivalence in this case~\cite[Lemma~5.4]{estrada-gill-coherent}. Lemma~\ref{lemma-AC-complexes}, along with Corollary~\ref{cor-homotopy-Gorenstein-AC-proj}, (resp.~\cite[Lemma~5.1]{gillespie-hereditary-abelian-models} in the injective case), give us the description of the homotopy category as the chain homotopy category of all complexes $X$ having each $X_n$ a Gorenstein projective (resp. Gorenstein injective) $R$-module. 

It is left to show that we have a compactly generated homotopy category. For this, suppose the dimension of the Ding-Chen ring $R$ is $d$. Let $\class{S} = \{\,\Omega^dF\,\}$ be a set of $d$th syzygies on a set (of isomorphism representatives) of all finitely presented chain complexes $F$. Then, arguing similarly to the proof of~\cite[Theorem~8.3]{hovey},  we can argue that $X \in \rightperp{\class{S}}$ if and only if $X$ has FP-injective (absolutely pure) dimension $\leq d$. Referring to Lemma~\ref{lemma-trivials} this means $\rightperp{\class{S}} = \class{W}$, and so $\class{S}$ cogenerates the cotorsion pair in this case. Note that since $R$ is coherent, the class of finitely presented complexes is closed under taking kernels. So each $\Omega^dF$ can be taken to be finitely presented (f.g. projective complexes are automatically finitely presented.)  Now as in the proof of~\cite[Theorem~9.4]{hovey}, we get from a general theorem~\cite[Corollary~7.4.4]{hovey-model-categories} that the set 
$$I = \{\, \Omega^{d+1}F \hookrightarrow P_d \,\} \cup \{\,0 \hookrightarrow D^n(R)\,\},$$ where $0 \xrightarrow{} \Omega^{d+1}F \xrightarrow{} P_d \xrightarrow{} \Omega^{d}F \xrightarrow{} 0$ is a short exact sequence taken with $P_d$ a finitely generated projective, provides a set of (finite) generating cofibrations. 
$J = \{\,0 \hookrightarrow D^n(R)\,\}$ is the set of (finite) generating trivial cofibrations. So the model structure is finitely generated and hence its homotopy category is compactly generated. 
\end{proof}

\begin{remark}
We continue the remarks made at the end of Section~\ref{sec-Goren-AC-proj}. For the Ding-Chen rings considered in this section, we again have Gorenstein AC-projective = Ding projective = Gorenstein projective. But we note that a flat module over a Ding-Chen ring may not have finite projective dimension. Indeed any von Neumann regular ring is Ding-Chen and such a ring may have infinite global dimension. A particular example is obtained by using the free Boolean rings of~\cite[Section~5]{pierce}. A \emph{Boolean ring} is a ring satisfying the identiy $x^2 = x$; such a ring is commutative and von Neumann regular. Let $F_{\alpha}$ be the free Boolean ring on $\aleph_{\alpha}$ generators. Pierce computes its global dimension in~\cite[Cor.~5.2]{pierce}; it is $\textnormal{dim}(F_{\alpha}) = n+1$ if $\alpha = n < \omega$, and  $\textnormal{dim}(F_{\alpha}) = \infty$ if $\alpha$ is infinite. 
\end{remark}

\providecommand{\bysame}{\leavevmode\hbox to3em{\hrulefill}\thinspace}
\providecommand{\MR}{\relax\ifhmode\unskip\space\fi MR }
\providecommand{\MRhref}[2]{%
  \href{http://www.ams.org/mathscinet-getitem?mr=#1}{#2}
}
\providecommand{\href}[2]{#2}


\begin{thebibliography}{Lam99}


\bibitem[BG16]{bravo-gillespie}
Daniel Bravo and James Gillespie, \emph{Absolutely clean, level, and Gorenstein AC-injective complexes}, Communications in Algebra vol.~44, no.5, 2016, pp.~2213--2233.


\bibitem[BGH14]{bravo-gillespie-hovey}
Daniel Bravo, James Gillespie and Mark Hovey, \emph{The stable module category of a general ring}, preprint;  arXiv:1405.5768.

\bibitem[DC93]{ding and chen 93}
N. Ding and J. Chen, \emph{The flat dimensions of injective modules}, Manuscripta Math. vol.~78, no.~2, 1993, pp.~165-177.

\bibitem[DC96]{ding and chen 96}
N. Ding and J. Chen, \emph{Coherent rings with finite self-FP-injective dimension}, Comm. Algebra, vol.~24, no.~9, 1996, pp.~2963-2980.


\bibitem[EGR97]{enochs-garcia-rozas}
E.~Enochs and J.R. Garc\'\i{}a-Rozas, \emph{Tensor products of
chain complexes}, Math J. Okayama Univ. 39, 1997, pp. 19--42.


\bibitem[EJ00]{enochs-jenda-book}
Edgar~E. Enochs and Overtoun M.~G. Jenda, \emph{Relative homological algebra},
  de Gruyter Expositions in Mathematics, vol.~30, Walter de Gruyter \& Co.,
  Berlin, 2000. 

\bibitem[EG15]{estrada-gill-coherent}
Sergio Estrada and James Gillespie, \emph{The projective stable category of a coherent scheme},   preprint; arXiv:1511.02907

\bibitem[EIO15]{estrada-iacob-odabasi-precovers}
Sergio Estrada, Alina Iacob, and Sinem Odabasi, \emph{Gorenstein flat and projective (pre)covers}, preprint; arXiv:1508.04173


 \bibitem[GR99]{garcia-rozas}
J.~R.~Garc\'\i{}a-Rozas, \emph{Covers and envelopes in the
category of complexes of modules}, Research Notes in Mathematics
no. 407, Chapman \& Hall/CRC, Boca Raton, FL, 1999.

 \bibitem[Gil07]{gillespie-quasi-coherent}
 James Gillespie, \emph{Kaplansky classes and derived categories},
 Math. Zeit. vol.~257, no.~4, 2007, pp.811--843.

 \bibitem[Gil08]{gillespie-degreewise-model-strucs}
 James Gillespie, \emph{Cotorsion pairs and degreewise homological model structures},
 Homology, Homotopy Appl. vol.~10, no.~1, 2008, pp.~283--304.

\bibitem[Gil10]{gillespie-ding}
James Gillespie, \emph{Model structures on modules over {D}ing-{C}hen rings},
  Homology, Homotopy Appl. vol.~12, no.~1, 2010, pp.~61--73.


\bibitem[Gil15]{gillespie-ding-modules}
James Gillespie, \emph{On Ding injective, Ding projective, and Ding flat modules and complexes},
preprint, arXiv:1512.05999.


 \bibitem[Gil16a]{gillespie-G-derived}
 James Gillespie, \emph{The derived category with respect to a generator}, Ann. Mat. Pura Appl. (4) vol.~195, no.~2, 2016, pp.~371--402.

\bibitem[Gil16b]{gillespie-recollement}
James Gillespie, \emph{Gorenstein complexes and recollements from cotorsion
  pairs}, Advances in Mathematics vol.~291, 2016, pp.~859--911.

\bibitem[Gil16c]{gillespie-hereditary-abelian-models}
James Gillespie, \emph{Hereditary abelian model categories}, Bulletin of the London Mathematical Society vol. 48, no. 6, 2016, pp.895-922
doi: 10.1112/blms/bdw051

\bibitem[GH10]{gillespie-hovey-graded-gorenstein}
James Gillespie and Mark Hovey, \emph{Gorenstein model structures and generalized derived categories},
Proceedings of the Edinburgh Mathematical Soc. vol.~53, no.~3, 2010, pp.~675--696.

\bibitem[GT06]{trlifaj-book}
R{\"u}diger G{\"o}bel and Jan Trlifaj, \emph{Approximations and Endomorphism Algebras of Modules},
  de Gruyter Expositions in Mathematics, vol.~41, Walter de Gruyter \& Co.,
  Berlin, 2006.

\bibitem[Hol04]{holm}
Henrik Holm, \emph{Gorenstein homological dimensions}, J. Pure Appl. Algebra vol.~189, 2004, pp.~167--193.


 \bibitem[Hov99]{hovey-model-categories}
 Mark Hovey, \emph{Model categories}, Mathematical Surveys and
 Monographs vol.~63, American Mathematical Society, 1999.

\bibitem[Hov02]{hovey}
 Mark Hovey, \emph{Cotorsion pairs, model category structures,
 and representation theory}, Mathematische Zeitschrift, vol.~241,
 2002, pp.553--592.

\bibitem[Iwa79]{iwanaga}
Y.~Iwanaga, \emph{On rings with finite self-injective dimension}, Comm.
Algebra, vol.~7, no.~4, 1979, pp.~393-414.

\bibitem[Iwa80]{iwanaga2}
Y.~Iwanaga, \emph{On rings with finite self-injective dimension II},
Tsukuba J. Math. vol.~4, no.~1, 1980, pp.~107-113.


\bibitem[Kap58]{kaplansky-projective}
Irving Kaplansky, \emph{Projective modules}, Ann. of Math (2) vol.~68
  (1958), 372--377. 


\bibitem[Nee01]{neeman-well generated}
Amnon Neeman, \emph{Triangulated categories}, Annals of Mathematics Studies vol.~148, Princeton University Press, Princeton, NJ, 2001.

\bibitem[Pie67]{pierce}
R.~S.~Pierce, \emph{The global dimension of Boolean rings}, Journal of Algebra vol.~7, 1967, pp.~91--99.

\bibitem[Ros05]{rosicky-brown representability combinatorial model srucs}
J.~Rosick\'y, \emph{Generalized Brown representability in homotopy categories}, Theory Appl. Categ. vol.~14, no.~19, 2005, pp.~451--479.

\bibitem[Sto14]{stovicek-purity}
Jan {\v{S}}\v{t}ov{\'{\i}}{\v{c}}ek,, \emph{On purity and applications to coderived and singularity categories},  arXiv:1412.1615.

\bibitem[Trl03]{trlifaj-Ext and inverse limits}
J. Trlifaj, \emph{Ext and inverse limits}, Ill. J. Math., vol.~47, no.~1/2, 2003, pp.~529-538.

 \bibitem[Wei94]{weibel}
 Charles A. Weibel, \emph{An Introduction to Homological Algebra},
 Cambridge Studies in Advanced Mathematics vol.~38, Cambridge
 University Press, 1994.

\bibitem[YLL13]{Ding-Chen-complex-models}
Gang Yang, Zhongkui Liu, and Li Liang, \emph{Model structures on categories of complexes over Ding-Chen rings}, Communications in Algebra, vol.~41, 2013, pp.~50--69.

\end{thebibliography}
\end{document}